\DeclareMathOperator{\Pic}{Pic}
\DeclareMathOperator{\Proj}{Proj}
\DeclareMathOperator{\Spec}{Spec}
\DeclareMathOperator{\Hilb}{Hilb}
\DeclareMathOperator{\Slip}{Slip}
\DeclareMathOperator{\Ext}{Ext}
\DeclareMathOperator{\Hom}{Hom}
\DeclareMathOperator{\Ann}{Ann}
\DeclareMathOperator{\Eff}{Eff}
\DeclareMathOperator{\Nef}{Nef}
\DeclareMathOperator{\sat}{sat}
\DeclareMathOperator{\Sat}{Sat}
\newcommand{\PP}{\mathbb{P}}%
\newcommand{\ZZ}{\mathbb{Z}}%
\newcommand{\bfa}{\mathbf{a}}%
\newcommand{\NN}{\mathbb{N}}%
\newcommand{\CC}{\mathbb{C}}%
\newcommand{\RR}{\mathbb{R}}%
\newcommand{\Satbar}{\overline{\Sat}}
\newtheorem{theorem}{Theorem}[section]
\newtheorem{corollary}[theorem]{Corollary}
\newtheorem{lemma}[theorem]{Lemma}
\newtheorem{proposition}[theorem]{Proposition}
\theoremstyle{definition}
\newtheorem{definition}[theorem]{Definition}
\newtheorem{remark}[theorem]{Remark}
\newtheorem{example}[theorem]{Example}
\begin{document}

\title{Limits of ideals of points in a toric variety}
\author{Tomasz Ma{\'n}dziuk}
\date{\today}
\maketitle

\begin{abstract} We consider the multigraded Hilbert scheme corresponding to the Hilbert function of a finite number of points in general position in a smooth projective complex toric variety. We develop several criteria for a point of that parameter space to be in the distinguished irreducible component. To obtain the criteria we study the behaviour of the locus of saturated ideals under morphisms of multigraded Hilbert schemes. We apply our results to classify the irreducible multigraded Hilbert schemes corresponding to points in general position in a product of projective spaces.
\end{abstract}

\section{Introduction}
The paper is motivated by the study of secant varieties. Let $X\subseteq \PP^N$ be a variety. We say that $[F]\in \PP^N$ has $X$-rank at most $r$ if $[F]\in \langle x_1, \ldots, x_r\rangle$ for an $r$-tuple $(x_1, \ldots, x_r)\in X^r$. Cases of classical interest include the case where $X$ is the Veronese variety---in which case we obtain the notion of the Waring rank---and the case where $X$ is the Segre variety in which case we get the tensor rank.

A natural approach to studying the set of points of $X$-rank at most $r$ is to consider its Zariski closure. In that way one obtains a projective variety $\sigma_r(X)$ called the $r$-th secant variety. This allows the use of methods from algebraic geometry. Among other properties of these varieties people study their dimensions, see for instance \cite{AH95},\cite{BBC12}, \cite{AB13}, \cite{GO22} and their equations \cite{Rai10}, \cite{LO13}, \cite{BB14}. See also \cite{IK06} and \cite{Lan12}.
We say that $[F]$ has $X$-border rank at most $r$ if it is in $\sigma_r(X)$. The closure involved in the definition of secant varieties is not easy to understand and there are not many methods for studying the $X$-border rank. For example the equations of secant varieties are not known in general and many known classes of equations of secant varieties are in fact equations of larger varieties---the cactus varieties \cite{Gal16}.

The border apolarity lemma \cite[Thm.~3.15]{BB19} is a recent tool for studying the $X$-border rank in the case where $X$ is a smooth complex projective toric variety. This includes in particular the classical cases of Segre-Veronese varieties.
The result establishes a connection between the set of all points in $\PP^N$ whose $X$-border rank is at most $r$ and the set of all points in a certain irreducible component $\Slip_{r, X}$ of a multigraded Hilbert scheme $\Hilb_{S[X]}^{h_{r, X}}$ (we recall the notion of a multigraded Hilbert scheme and explain the notation in Subsection~\ref{subsec:notation}). This shifts the problem from understanding the closure in $\PP ^N$ to understanding the closure in $\Hilb_{S[X]}^{h_{r, X}}$ and allows for using a different set of tools.
 Motivated by this result we develop some necessary criteria for a point of that parameter space to be in $\Slip_{r,X}$. We also use these criteria to classify all reducible multigraded Hilbert schemes $\Hilb_{S[X]}^{h_{r, X}}$ in the case that $X$ is a product of projective spaces. This is a natural continuation of papers \cite{Man20}~and~\cite{JM22} in which we presented some necessary and one sufficient condition for $[I]$ to be in $\Slip_{r, X}$. The second paper considers a more general problem of understanding which ideals are limits of saturated ideals. We present some of our results in this generality. The border apolarity lemma has been successfully applied to the study of border ranks of tensors. See for example \cite{CHL19}, \cite{HMV20},  \cite{GMR20} and \cite{Fla22}. We expect that the criteria developed in this paper will allow to obtain new results on border ranks.
Our main motivation is the study of the case of $X$ being the product of projective spaces. However, there are some results concerning the secant varieties and related objects for more general toric varieties \cite{CS07}, \cite{GRV18} so we present our results in a greater generality.

\subsection{Notation}\label{subsec:notation}

Let $X$ be a smooth projective complex toric variety. It has a Cox ring $S[X]$ that is a polynomial ring with variables corresponding to torus invariant Cartier divisors on $X$. This ring is graded by the Picard group $\Pic(X)$ of $X$. It has a distinguished $\Pic(X)$-homogeneous ideal $B(X)$ called the irrelevant ideal. See \cite{CLS11} for the precise definitions that we use. Given a $\Pic(X)$-graded $S[X]$-module $M$ we denote by $M_{[D]}$ its vector subspace of all homogeneous elements of degree $[D]$. By $H_M\colon \Pic(X)\to \NN\cup \{\infty\}$ we denote the Hilbert function of $M$, i.e. we have $H_M([D]) = \dim_\CC M_{[D]}$.

Given a positive integer $r$ we consider the Hilbert function $h_{r,X}\colon \Pic(X)\to \mathbb{N}$ given by $h_{r,X}([D]) = \min\{\dim_\CC S[X]_{[D]}, r\}$. This is the Hilbert function of the quotient algebra of the ideal of $r$ points in general position in $X$.

By $\Hilb_{S[X]}^{h_{r,X}}$ we denote the multigraded Hilbert scheme parametrizing all $\Pic(X)$-homogeneous ideals $I$ of $S[X]$ such that the Hilbert function of $S[X]/I$ is $h_{r,X}$. This parameter space exists and is a projective scheme \cite[Cor.~1.2]{HS04}. It has a distinguished irreducible component $\Slip_{r,X}$ (see \cite[Prop.~3.13]{BB19}) which is the closure of the set of all ideals that are $B(X)$-saturated and radical.

We denote by $\Eff(X)$ the set of all $[D]\in \Pic(X)$ with $H^0(X, \mathcal{O}_X(D)) \neq 0$ and by $\Nef(X)$ the set of all $[D]\in \Pic(X)$ with $D$ a nef line bundle.

\subsection{Main results}
The main results of this paper are necessary criteria for a point of $\Hilb_{S[X]}^{h_{r,X}}$ to be in $\Slip_{r,X}$. One of them is presented in an abstract version in Proposition~\ref{prop:main_observation} and has two consequences: Theorems~\ref{thm:toric_fibration} and Theorem~\ref{thm:embd_criterion} whose special versions where $X$ is a product of projective spaces we discuss here. Both of these criteria are of the following form. We have $[I]\in \Hilb_{S[X]}^{h_{r,X}}$ and we want to know if it belongs to $\Slip_{r,X}$. We associate to $[I]$ an ideal $[J]\in \Hilb_{S[Y]}^{h_{r,Y}}$ for a different smooth projective complex toric variety $Y$. A necessary condition for $[I]$ to be in $\Slip_{r, X}$ is that $[J]\in \Slip_{r,Y}$. In applications that we present, $Y$ is a simpler variety than $X$ so we may use what we know about $\Slip_{r,Y}$ to obtain insight into $\Slip_{r,X}$

Our first result concerns projections from the products of projective spaces. We start with an example.

\begin{example}\label{ex:projections}
Let $a,b,c$ be positive integers and $X=\PP^a\times \PP^b\times \PP^c$.
Its Cox ring is the $\ZZ^3$-graded polynomial ring $\CC[\alpha_0,\ldots, \alpha_a, \beta_0,\ldots, \beta_b, \gamma_0, \ldots, \gamma_c]$ with $\deg(\alpha_i) = (1,0,0)$,
$\deg(\beta_j) = (0,1,0)$ and $\deg(\gamma_k)= (0,0,1)$ for every $0\leq i \leq a$, $0\leq j \leq b$ and $0\leq k \leq c$. Let $Y = \PP^a$ (respectively $\PP^a\times \PP^b$). Its Cox ring is the subring $S[Y] = \CC[\alpha_0, \ldots, \alpha_a]$ (respectively, $S[Y] = \CC[\alpha_0, \ldots, \alpha_a, \beta_0, \ldots, \beta_b]$) of $S[X]$. 
Given $[I]\in \Hilb_{S[X]}^{h_{r,X}}$ the ideal $J = I\cap S[Y]$ determines a point $[J]\in \Hilb_{S[Y]}^{h_{r,Y}}$. Our result shows that if $[I] \in \Slip_{r,X}$ then $[J]\in \Slip_{r,Y}$.

While our result works for any positive integers $a,b,c$ and $r$ for illustration we choose specific values: $a=b=c=5$ and $r=7$. 
If $[I]\in \Slip_{r,X}$ then $I$ has at least $129$ minimal generators:
\begin{itemize}
    \item $21-7=14$ generators of each of the degrees $(2,0,0)$, $(0,2,0)$ and $(0,0,2)$
    \item $36-7=29$ minimal generators of each of the degrees $(1,1,0), (1,0,1)$ and $(0,1,1)$.
\end{itemize}
On the other hand an ideal defining a $7$-tuple of points in $\PP^5$ in general position can be generated by $14$ elements of degree $2$. In general, the number of minimal generators of $I\cap S[Y]$ is significantly smaller than the number of minimal generators of $I$. It follows that $\Slip_{r,Y}$ should be easier to understand than $\Slip_{r,X}$. 
\end{example}

The general version for projections from products of projective spaces is as follows. A version for more general toric varieties is presented in Theorem~\ref{thm:toric_fibration} and includes the claim that the map of $\Slip$'s is surjective.
\begin{theorem}\label{thm:fibration_intro}
Let $Y=\PP^{n_1}\times \cdots \times \PP^{n_d}$ and $X = Y  \times \PP^{m_1}\times \cdots \times \PP^{m_e}$.
Let $S[Y]\subseteq S[X]$ be the Cox rings of $Y$ and $X$.
Let $r$ be a positive integer and $[I]\in \Hilb_{S[X]}^{h_{r,X}}$. We have $[I\cap S[Y]] \in \Hilb_{S[Y]}^{h_{r,Y}}$ and if $[I]\in \Slip_{r,X}$, then $[I\cap S[Y]]\in \Slip_{r,Y}$.
\end{theorem}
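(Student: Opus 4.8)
The plan is to exhibit the assignment $[I]\mapsto[I\cap S[Y]]$ as a morphism of schemes $\phi\colon\Hilb_{S[X]}^{h_{r,X}}\to\Hilb_{S[Y]}^{h_{r,Y}}$, to check that on the dense locus of $B(X)$-saturated radical ideals it maps into the corresponding locus for $Y$, and then to conclude by passing to closures. Write $Z=\PP^{m_1}\times\cdots\times\PP^{m_e}$, so $X=Y\times Z$ and $\Pic(X)=\Pic(Y)\times\ZZ^e$ with $\Pic(Y)$ embedded as $\Pic(Y)\times\{0\}$. The first thing I would record is that, because every variable of $S[Z]\subseteq S[X]$ has strictly positive $\ZZ^e$-degree, a monomial of $S[X]$ has degree in $\Pic(Y)\times\{0\}$ if and only if it involves only the variables of $S[Y]$; hence $S[X]_{([D],0)}=S[Y]_{[D]}$ for all $[D]\in\Pic(Y)$. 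Consequently, for $[I]\in\Hilb_{S[X]}^{h_{r,X}}$ one has $(I\cap S[Y])_{[D]}=I_{([D],0)}$, so $\dim_\CC(S[Y]/(I\cap S[Y]))_{[D]}=h_{r,X}([D],0)=h_{r,Y}([D])$, which together with the evident homogeneity gives $[I\cap S[Y]]\in\Hilb_{S[Y]}^{h_{r,Y}}$ and settles the first assertion.

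To promote this to a morphism I would use the functor of points. Given a $T$-point $\mathcal{I}\subseteq S[X]\otimes_\CC\mathcal{O}_T$ of $\Hilb_{S[X]}^{h_{r,X}}$, set $\mathcal{J}:=\bigoplus_{[D]\in\Pic(Y)}\mathcal{I}_{([D],0)}\subseteq S[Y]\otimes_\CC\mathcal{O}_T$. The same bookkeeping as above shows $\mathcal{J}$ is a homogeneous ideal with $(S[Y]\otimes\mathcal{O}_T/\mathcal{J})_{[D]}=(S[X]\otimes\mathcal{O}_T/\mathcal{I})_{([D],0)}$, hence locally free of rank $h_{r,Y}([D])$; since each graded piece of $\mathcal{J}$ is therefore a locally split subbundle, the construction commutes with arbitrary base change. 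Thus $\mathcal{I}\mapsto\mathcal{J}$ is a natural transformation of functors of points, and Yoneda produces the desired morphism $\phi$, which on $\CC$-points is $[I]\mapsto[I\cap S[Y]]$. (This is an instance of Proposition~\ref{prop:main_observation}; alternatively one may deduce the whole statement from the general Theorem~\ref{thm:toric_fibration}.)

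Next I would analyze $\phi$ on the dense subset $W_X\subseteq\Slip_{r,X}$ consisting of $B(X)$-saturated radical ideals, showing $\phi(W_X)\subseteq W_Y$, where $W_Y$ is defined analogously. If $I\in W_X$, then $J:=I\cap S[Y]$ is radical, since $f^n\in I\cap S[Y]$ forces $f\in I$ and $f\in S[Y]$. For saturation the key input is the inclusion $B(X)\subseteq B(Y)\,S[X]$ of irrelevant ideals (clear from $B(X)=B(Y)\,S[X]\cap B(Z)\,S[X]$, or directly: every minimal monomial generator of $B(X)$ is a minimal generator of $B(Y)$ times a monomial in the $Z$-variables). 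Then $B(X)^N\subseteq(B(Y)\,S[X])^N=B(Y)^N\,S[X]$, so if $f\in S[Y]$ is homogeneous with $f\cdot B(Y)^N\subseteq J\subseteq I$, then $f\cdot B(X)^N\subseteq f\cdot B(Y)^N\,S[X]\subseteq I$, whence $f\in I$ by $B(X)$-saturation of $I$, i.e.\ $f\in J$. Together with the Hilbert function computation this gives $J\in W_Y$. (Geometrically, $J$ is just the saturated ideal of the image in $Y$ of the $r$ reduced points cut out by $I$, that image again consisting of exactly $r$ distinct points because $H_{S[Y]/J}=h_{r,Y}$.) Finally, since $\phi$ is continuous, $\Slip_{r,X}=\overline{W_X}$ and $\Slip_{r,Y}=\overline{W_Y}$, we obtain $\phi(\Slip_{r,X})\subseteq\overline{\phi(W_X)}\subseteq\overline{W_Y}=\Slip_{r,Y}$, which is the second assertion.

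I expect the main obstacle to be the second step — upgrading the set-theoretic map $[I]\mapsto[I\cap S[Y]]$ to an honest morphism of schemes — since intersection with a subring is a priori not well-behaved in flat families. The point that makes it work, and essentially the only genuine observation required, is that over $\Pic(Y)$-degrees this operation is literally ``retain the graded components in the degrees $\Pic(Y)\times\{0\}$'', after which local freeness and compatibility with base change are automatic; the remaining ingredient, preservation of $B$-saturation, then follows cheaply from the inclusion $B(X)\subseteq B(Y)\,S[X]$.
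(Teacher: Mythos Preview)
Your proposal is correct and follows essentially the same approach as the paper: the paper derives Theorem~\ref{thm:fibration_intro} as the special case of Theorem~\ref{thm:toric_fibration} (via Corollary~\ref{cor:projection_criterion}), whose proof in turn rests on Proposition~\ref{prop:main_observation}, and your argument is a direct specialization of that proposition's proof to the inclusion $S[Y]\hookrightarrow S[X]$---the functor-of-points construction of $\phi$ and the saturation step using $B(X)\subseteq B(Y)\,S[X]$ are exactly the content of parts~\ref{it:prop_p1} and~\ref{it:prop_p2} there. You even note this yourself.
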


The second result (Theorem~\ref{thm:embd_intro}) is motivated by the Segre embedding.
\begin{example}\label{ex:segre}
    Let $X=\PP^{a}\times \PP^{b}\times \PP^c$ and $N=(a+1)(b+1)(c+1)-1$. Assume that  $N +1 \geq  r$. Let 
    \[
    T = \CC[t_{ijk}|0\leq i \leq a, 0\leq j \leq b, 0\leq k \leq c]
    \]
    and consider the natural map $\varphi\colon T\to S$ given by $t_{ijk}\mapsto \alpha_i\beta_j\gamma_k$.  Given a point $[I]\in \Hilb_{S[X]}^{h_{r,X}}$, the ideal $J=\varphi^{-1}(I) \subseteq T$
    defines a point $[J]\in \Hilb_{S[\PP^N]}^{h_{r, \PP^N}}$. We show that if $[I]\in \Slip_{r,X}$ then $[J]\in \Slip_{r,\PP^N}$.

    As in Example~\ref{ex:projections} for illustration of the result we choose $a=b=c=5$ and $r=7$. An ideal $[I]\in \Hilb_{S[X]}^{h_{r,X}}$ has at least $129$ minimal generators and an ideal defining a $7$-tuple of points in general position in $\PP^{215}$ has $230$ minimal generators:
    \begin{itemize}
        \item $216-7 = 209$ linear generators
        \item $21$ quadratic generators.
    \end{itemize}
    The total number of generators is large but
        in order to show that $[J]\notin \Slip_{7, \PP^{215}}$ we may change the coordinates so that the $209$ linear generators are variables and then consider the image $J'$ of $J$ in the ring $S[\PP^{215}]/(J_1)$. It follows from \cite[Prop.~3.1]{CEVV09}  that if $[J']\notin \Slip_{7, \PP^6}$ then $[J]\notin \Slip_{7, \PP^{215}}$. 
\end{example}

Our result in the case of products of projective spaces is the following. See Theorem~\ref{thm:embd_criterion} for a version for more general toric varieties instead of $X$.
\begin{theorem}\label{thm:embd_intro}
Let $X=\PP^{n_1}\times \cdots \times \PP^{n_d}$ and $\mathbf{u}\in \ZZ_{\geq 0}^{d}\setminus \{\mathbf{0}\}$. Let $(g_0, \ldots, g_k)$  be a basis of $S[X]_\mathbf{u}$ and let $\varphi\colon \CC[t_0,\ldots, t_k] \to S[X]$ be given by $t_i\mapsto g_i$. If $k+1\geq r$ then the map $[I]\mapsto [\varphi^{-1}(I)]$ defines a morphism $\pi\colon \Hilb_{S[X]}^{h_{r,X}} \to \Hilb_{S[\PP^k]}^{h_{r,\PP^k}}$ that maps $\Slip_{r,X}$ into $\Slip_{r,\PP^k}$.
\end{theorem}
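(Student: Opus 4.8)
The plan is to derive Theorem~\ref{thm:embd_intro} from the abstract Proposition~\ref{prop:main_observation} (equivalently, from Theorem~\ref{thm:embd_criterion}), so the real work is to check its hypotheses for the specific map $\varphi\colon \CC[t_0,\dots,t_k]\to S[X]$, $t_i\mapsto g_i$. First I would verify the purely scheme-theoretic claim that $[I]\mapsto[\varphi^{-1}(I)]$ is well-defined on closed points: since $\{g_0,\dots,g_k\}$ is a $\CC$-basis of $S[X]_\mathbf{u}$, the map $\varphi$ is injective in degree $1$ (of $\CC[t_0,\dots,t_k]$, which lands in degree $\mathbf{u}$), and more generally $\varphi$ restricted to degree $\ell$ has image $S[X]_{\ell\mathbf{u}}\subseteq S[X]$. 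Then $\varphi^{-1}(I)$ is a homogeneous ideal whose degree-$\ell$ piece is the preimage of $I_{\ell\mathbf{u}}$ under the surjection $\CC[t_0,\dots,t_k]_\ell \twoheadrightarrow S[X]_{\ell\mathbf{u}}$, so
\[
\dim_\CC\bigl(\CC[t_0,\dots,t_k]/\varphi^{-1}(I)\bigr)_\ell
= \dim_\CC (S[X]/I)_{\ell\mathbf{u}} = h_{r,X}(\ell\mathbf{u}).
\]
Because $\mathbf{u}\neq\mathbf 0$, the values $\dim_\CC S[X]_{\ell\mathbf{u}}$ are strictly increasing in $\ell$ until they would exceed $r$; combined with $k+1=\dim_\CC S[X]_\mathbf{u}\geq r$ this forces $h_{r,X}(\ell\mathbf{u})=\min\{\binom{k+\ell}{\ell}, r\} = h_{r,\PP^k}(\ell)$ for all $\ell\geq 0$, which is exactly the Hilbert function condition. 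That the assignment is a morphism of schemes (not just on points) should follow from the functorial description of multigraded Hilbert schemes in \cite{HS04}: pulling back a flat family along $\varphi$ stays flat with constant Hilbert function, the same computation applying over an arbitrary base.

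Next I would feed this $\varphi$ into Theorem~\ref{thm:embd_criterion}. The content there is that if the ideal-theoretic datum is compatible with the toric structures in the sense required (the map should be induced, on Cox rings, by a toric morphism or by a suitable monomial-type map, and should respect the irrelevant ideals appropriately), then $\pi$ carries $\Slip_{r,X}$ into $\Slip_{r,\PP^k}$. Since $\Slip_{r,X}$ is by definition the closure of the locus of $B(X)$-saturated radical ideals, and $\pi$ is a morphism of projective schemes (hence closed and continuous), it suffices to show: if $I$ is $B(X)$-saturated and radical — in particular $I$ is the ideal of $r$ distinct points $p_1,\dots,p_r\in X$ in general position — then $\varphi^{-1}(I)$ lies in $\Slip_{r,\PP^k}$. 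Concretely, $\varphi^{-1}(I)$ should be the ideal of the $r$ points $\nu(p_1),\dots,\nu(p_r)\in\PP^k$, where $\nu\colon X\dashrightarrow\PP^k$ is the rational map given by the linear system $|\mathbf{u}|$ (the Veronese-type embedding attached to the sublinear system spanned by $g_0,\dots,g_k$); general position of the $p_i$ in $X$ gives general position of their images, so $\varphi^{-1}(I)$ is saturated and radical, hence in $\Slip_{r,\PP^k}$. Taking closures propagates this to all of $\Slip_{r,X}$.

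The main obstacle I anticipate is the compatibility of saturations: $\varphi^{-1}(B(X))$ need not equal $B(\PP^k)=(t_0,\dots,t_k)$, and for a general $\mathbf{u}$ (not ample) the map $\nu$ is only a rational map, so one must be careful that $\varphi^{-1}(I)$ is genuinely the homogeneous ideal of a well-defined reduced $0$-dimensional subscheme of $\PP^k$ of length $r$, and that "general position in $X$" is strong enough to guarantee "general position of the images." This is precisely the kind of bookkeeping that Theorem~\ref{thm:embd_criterion} (via Proposition~\ref{prop:main_observation}) is designed to handle in the toric generality, so the strategy is to phrase everything so that its hypotheses are visibly met: identify the toric morphism underlying $\varphi$, check that it induces the correct map on Picard groups and that the Hilbert functions match as above, and then invoke the theorem. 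The remaining steps — injectivity of $\varphi$ in each degree, the Hilbert-function bookkeeping forcing $h_{r,\PP^k}$, and flatness of the pullback family — are routine and I would not belabor them.
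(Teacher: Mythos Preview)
Your overall strategy---reduce to Theorem~\ref{thm:embd_criterion} and ultimately to Proposition~\ref{prop:main_observation}---is exactly what the paper does, and your Hilbert-function computation is correct (indeed once $k+1\ge r$ one has $h_{r,X}(\ell\mathbf u)=r=h_{r,\PP^k}(\ell)$ for all $\ell\ge 1$, and both equal $1$ at $\ell=0$). However, you have misidentified what the hypotheses of Theorem~\ref{thm:embd_criterion} actually are: there is no ``toric morphism underlying $\varphi$'' to find, no Picard-group map to check, and no need to analyze the rational map $\nu$ or to track images of generic point-configurations. Theorem~\ref{thm:embd_criterion} (equivalently Proposition~\ref{prop:main_observation}\ref{it:prop_p2}) has a purely algebraic input that you never verify, namely the irrelevant-ideal condition
\[
B(X)\ \subseteq\ \sqrt{(S[X]_{\mathbf u})}
\qquad\bigl(\text{i.e.\ }B_T\subseteq\sqrt{\varphi(B_S)\cdot T}\text{ in the notation of Proposition~\ref{prop:main_observation}}\bigr).
\]
This is precisely the condition that forces $\varphi^{-1}$ to carry $B(X)$-saturated ideals to $(t_0,\dots,t_k)$-saturated ideals and hence $\Slip_{r,X}$ into $\Slip_{r,\PP^k}$. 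For $X=\PP^{n_1}\times\cdots\times\PP^{n_d}$ it is a one-line check: if $m=\prod_i\alpha_{i,j_i}$ is a monomial generator of $B(X)$ and $N=\max_i u_i$, then $m^N=\bigl(\prod_i\alpha_{i,j_i}^{u_i}\bigr)\cdot\bigl(\prod_i\alpha_{i,j_i}^{N-u_i}\bigr)\in(S[X]_{\mathbf u})$.

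You also assert without argument that $\varphi$ is surjective onto $S[X]_{\ell\mathbf u}$ in each degree $\ell$ (this is condition~\ref{it:thm_em_1} of Theorem~\ref{thm:embd_criterion}); it is true for products of projective spaces but deserves a sentence, since it is what makes the Hilbert-function identity and the very existence of $\pi$ go through via Proposition~\ref{prop:main_observation}\ref{it:prop_p1}. With these two algebraic verifications in hand your geometric detour through $\nu$ becomes unnecessary---which is fortunate, because when some coordinate $u_i=0$ the map $\nu$ is genuinely only rational and need not separate an arbitrary $r$-tuple of points, so that route would require additional work you have not supplied.
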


We also obtain a criterion based on the dimension of the tangent space (Proposition \ref{prop:cg_subsets_give_necessary_conditions}).
We mainly use its consequence Corollary~\ref{cor:ts_products} which concerns the case of products of projective spaces. However, note that even for this variety we may replace the ideals $\mathfrak{a}_i$ in the statement below by other ideals as described in Proposition~\ref{prop:ts_products_general} or even more generally Proposition~\ref{prop:cg_subsets_give_necessary_conditions}.

\begin{proposition}[Corollary~\ref{cor:ts_products}]\label{prop:ts_intro}
Let $X=\PP^{n_1}\times \cdots \times \PP^{n_d}$ for some $d\geq 2$ and $n_1,\ldots, n_d \geq 1$.
Let $S[\PP^{n_i}] = \CC[\alpha_{i0}, \ldots, \alpha_{in_i}]$ be the Cox ring of the $i$-th factor.
Let $\mathfrak{a}_i$ be the extension of the irrelevant ideal $(\alpha_{i0}, \ldots, \alpha_{in_i})$ of $\PP^{n_i}$ to the Cox ring of $X$.
If $[I] \in \Slip_{r,X}$, then for every $i\in \{1,2,\ldots, d\}$
\[
\dim_\CC \Hom_S(I+\mathfrak{a}_i^2, S/(I+\mathfrak{a}_i^2))_\mathbf{0} \geq r\cdot \dim X.
\]
\end{proposition}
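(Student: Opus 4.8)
The plan is to obtain this as a special case of the tangent-space criterion Proposition~\ref{prop:cg_subsets_give_necessary_conditions} (equivalently, to prove it directly along the same lines). The starting point is the standard fact that the Zariski tangent space to the multigraded Hilbert scheme $\Hilb_{S[X]}^{h_{r,X}}$ at a point $[I]$ is $\Hom_S(I,S/I)_{\mathbf 0}$. Since $\Slip_{r,X}$ is an irreducible component of $\Hilb_{S[X]}^{h_{r,X}}$ of a known dimension—namely $\dim \Slip_{r,X} = r\cdot\dim X$, because a dense subset parametrises radical saturated ideals of $r$ general points, and the $r$-tuple of points varies in $X^{r}$ modulo nothing (the points are unordered but that only changes things by a finite group)—any $[I]\in\Slip_{r,X}$ must have $\dim_\CC \Hom_S(I,S/I)_{\mathbf 0}\geq r\cdot\dim X$. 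The substance of the proposition is that the \emph{same} lower bound survives after we enlarge $I$ to $I+\mathfrak a_i^2$: the point $[I+\mathfrak a_i^2]$ lies in a multigraded Hilbert scheme whose distinguished component again has dimension $r\cdot\dim X$, and membership in that component is a necessary consequence of $[I]\in\Slip_{r,X}$.

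First I would verify that for $[I]\in\Slip_{r,X}$ the ideal $I+\mathfrak a_i^2$ has a Hilbert function independent of $[I]$, so that $[I+\mathfrak a_i^2]$ is a point of a fixed multigraded Hilbert scheme, call it $\Hilb_{S[X]}^{h'}$. It suffices to check this on a dense open set of $\Slip_{r,X}$, i.e. for $I$ the saturated ideal of $r$ general points $p_1,\dots,p_r\in X$. Writing $V(\mathfrak a_i)\subseteq X$ for the subvariety where the $i$-th block of Cox variables vanishes, the scheme $\Spec S/(I+\mathfrak a_i^2)$ is set-theoretically supported on the finitely many points $p_j$ lying on $V(\mathfrak a_i)$; for $r$ general points none of the $p_j$ lies on $V(\mathfrak a_i)$ (as $\PP^{n_i}$ has positive dimension), so $I+\mathfrak a_i^2$ contains a power of $\mathfrak a_i$ plus enough of $I$ that $S/(I+\mathfrak a_i^2)$ is a module whose Hilbert function one computes from $I$ and $\mathfrak a_i^2$ alone by flatness over the parameter space of point-tuples. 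This is the routine but slightly delicate step. Then I would invoke Proposition~\ref{prop:cg_subsets_give_necessary_conditions} (or its proof): the assignment $[I]\mapsto[I+\mathfrak a_i^2]$ is a morphism $\Slip_{r,X}\to\Hilb_{S[X]}^{h'}$ whose image lies in the distinguished component $\Slip'$ of the target, because on the dense subset of radical saturated $I$ the ideal $I+\mathfrak a_i^2$ is itself the saturated ideal of the corresponding subscheme structure (or a flat degeneration thereof), hence lies in $\Slip'$, and $\Slip'$ is closed.

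Finally, $\dim\Slip' = r\cdot\dim X$ by the same count as for $\Slip_{r,X}$: a general member is determined by the $r$ points together with a choice of "square-zero thickening direction at each point", but because the points avoid $V(\mathfrak a_i)$ the ideal $\mathfrak a_i^2$ imposes no local conditions there and the general fibre of $\Slip'\to X^{(r)}$ is a single point, so the dimension is unchanged. Since the tangent space to $\Hilb_{S[X]}^{h'}$ at any point $[J]$ is $\Hom_S(J,S/J)_{\mathbf 0}$, a point $[J]=[I+\mathfrak a_i^2]$ lying on the $(r\cdot\dim X)$-dimensional component $\Slip'$ forces
\[
\dim_\CC \Hom_S\bigl(I+\mathfrak a_i^2,\ S/(I+\mathfrak a_i^2)\bigr)_{\mathbf 0}\ \geq\ r\cdot\dim X,
\]
which is the claimed inequality. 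The main obstacle is the first step: pinning down that $h'$ is well-defined and that $[I+\mathfrak a_i^2]$ genuinely lands in the distinguished component—i.e. checking the flatness/Hilbert-function bookkeeping for the operation $I\mapsto I+\mathfrak a_i^2$ uniformly over $\Slip_{r,X}$, for which the key input is that $r$ general points of $X$ avoid the locus $V(\mathfrak a_i)$. Once that is in place, everything reduces to the dimension count for $\Slip'$ and the standard identification of the tangent space.
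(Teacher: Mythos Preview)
Your overall strategy---pass to the map $[I]\mapsto [I+\mathfrak a_i^2]$ into another multigraded Hilbert scheme and bound the tangent space there---is exactly the mechanism behind Proposition~\ref{prop:cg_subsets_give_necessary_conditions}. However, you have misidentified where the difficulty lies, and the substitute argument you offer does not go through.

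First, the Hilbert function of $S/(I+\mathfrak a_i^2)$ is \emph{not} a delicate issue. Since $\mathfrak a_i^2=\bigoplus_{u_i\ge 2}S_{\mathbf u}$, one has $(S/(I+\mathfrak a_i^2))_{\mathbf u}=(S/I)_{\mathbf u}$ for $u_i\le 1$ and $0$ otherwise; this is determined by $h_{r,X}$ alone and requires no flatness or geometric reasoning. Second, your discussion of $V(\mathfrak a_i)$ is off target: $V(\mathfrak a_i)$ is a component of the \emph{irrelevant locus} in $\overline X$, so every point of $X$ automatically ``avoids'' it, and the sentence ``$I+\mathfrak a_i^2$ is itself the saturated ideal of the corresponding subscheme structure'' is false---for any saturated $[I]$ with $H_{S/I}=h_{r,X}$ one checks that $\mathfrak a_i\cdot B(X)\subseteq \mathfrak a_i^2$, so $\mathfrak a_i\subseteq (I+\mathfrak a_i^2:B(X))$ while $\mathfrak a_i\not\subseteq I+\mathfrak a_i^2$.

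The genuine content, which your proposal does not supply, is that $\pi$ is \emph{injective on $B(X)$-saturated ideals}: one must show that a saturated $[I]\in\Hilb^{h_{r,X}}_S$ is determined by its graded pieces in degrees $\{u_i\le 1\}$. This is condition~\ref{con3:prop_ts} of Proposition~\ref{prop:cg_subsets_give_necessary_conditions} (that $\mathcal B\setminus\mathcal A$ be $(r,X)$-sufficient), and the paper proves it via Lemma~\ref{lem:ifthensufficient} with the explicit choices $[E]=\sum_{j\ne i}\max\{a_j,r\}\mathbf e_j$, $[F]=\mathbf e_i$, $[G]=\sum_{j\ne i}\max\{r-a_j,0\}\mathbf e_j$, $k=a_i$---an argument using nonzerodivisors on $S/I$ to propagate the equality $H_{S/I}=r$ from degrees with $u_i\le 1$ to all of $\mathcal C(r,X)$. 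Once injectivity holds, $\dim\pi(\Slip_{r,X})=r\cdot\dim X$ and the tangent-space bound follows; there is no need to identify a ``distinguished component $\Slip'$'' in the target or to compute its dimension separately.
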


We illustrate our criteria with an example.
\begin{example}\label{ex:explicit}
Let $X=\PP^3\times \PP^3\times \PP^3$ and $S[X] = \CC[\alpha_0,\ldots, \alpha_3, \beta_0, \ldots, \beta_3, \gamma_0, \ldots, \gamma_3]$ be its Cox ring.
Consider the ideal

\begin{align*}
I & = (\alpha_0,\alpha_1,\alpha_2)^2 + (\beta_2,\beta_3)(\beta_0,\beta_1,\beta_2) + (\beta_1^3) + (\gamma_2)(\gamma_0,\ldots, \gamma_3) + (\gamma_1^2,\gamma_1\gamma_3, \gamma_0^2\gamma_3) \\
& + (\alpha_0,\ldots, \alpha_3)(\beta_2,\beta_3)+(\alpha_1,\alpha_2)(\beta_0,\beta_1) + (\alpha_0\beta_1^2,\alpha_3\beta_1^2) + (\gamma_0,\gamma_1,\gamma_2)(\alpha_0,\ldots, \alpha_3, \beta_0,\ldots, \beta_3). 
\end{align*}

We have $[I]\in \Hilb_{S[X]}^{h_{4, X}}$ and we show that $[I]\notin \Slip_{4, X}$. If we apply Proposition~\ref{prop:ts_intro} with $i\in\{1,2,3\}$ the dimensions of the
spaces of homomorphisms are $48, 53$ and $56$ respectively. Therefore, we cannot conclude that $[I]\notin \Slip_{4, X}$.
Let $B$ be the ideal generated by $S[X]_{(1,1,1)}$ and $J = (I\colon B^\infty)$ be the saturation. We have $\dim_\CC \Ext_S^1(J/I, S/J)_\mathbf{0} = 3$ so the criterion from \cite[Thm.~3.4]{JM22} (with our choice of $J$) is also not sufficient to conclude that $[I]\notin \Slip_{4,X}$. Similarly, if we use Theorem~\ref{thm:fibration_intro} for the three projections onto $\PP^3$'s we obtain ideals
\begin{align*}
    I' &= (\alpha_2^2,\alpha_1\alpha_2,\alpha_0\alpha_2,\alpha_1^2,\alpha_0\alpha_1,\alpha_0^2) \subseteq \CC[\alpha_0, \alpha_1, \alpha_2, \alpha_3] \\
    I'' &= (\beta_2\beta_3,\beta_1\beta_3,\beta_0\beta_3,\beta_2^2,\beta_1\beta_2,\beta_0\beta_2,\beta_1^3)\subseteq \CC[\beta_0, \beta_1, \beta_2, \beta_3]\\
    I''' & = (\gamma_2\gamma_3,\gamma_1\gamma_3,\gamma_2^2,\gamma_1\gamma_2,\gamma_0\gamma_2,\gamma_1^2,\gamma_0^2\gamma_3) \subseteq \CC[\gamma_0, \gamma_1, \gamma_2, \gamma_3].
\end{align*}
It follows from \cite[Prop.~4.3]{JM22} that $[I'], [I''], [I''']$ are all points of $\Slip_{4, \PP^3}$ so again we cannot conclude that $[I]\notin \Slip_{4, X}$.
Consider the projection map $X\to \PP^3\times \PP^3$ onto the first two factors. If $[I]\in \Slip_{4, X}$, then by Theorem~\ref{thm:fibration_intro} we have $[J]\in \Slip_{4, \PP^3\times \PP^3}$ where 
\begin{align*}
J = &(\beta_2\beta_3,\beta_1\beta_3,\beta_0\beta_3,\beta_2^2,\beta_1\beta_2,\beta_0\beta_2,\alpha_3\beta_3,\alpha_3\beta_2,\alpha_2\beta_
      3,\alpha_2\beta_2,\alpha_2\beta_1,\alpha_2\beta_0,\alpha_1\beta_3,\alpha_1\beta_2,\alpha_1\beta_1,\alpha_1\beta_0,\\
      & \alpha_0\beta_3,\alpha_0\beta_2,
      \alpha_2^2,\alpha_1\alpha_2,\alpha_0\alpha_2,\alpha_1^2,\alpha_0\alpha_1,\alpha_0^2,\beta_1^3,\alpha_3\beta_1^2,\alpha_0\beta_1^2)\subseteq T :=  \CC[\alpha_0,\ldots, \alpha_3, \beta_0,\ldots, \beta_3].
\end{align*}
Let $\mathfrak{a} = (\alpha_0, \ldots, \alpha_3)$. We compute that $\dim_\CC \Hom_T(J+\mathfrak{a}^2, T/(J+\mathfrak{a}^2))_\mathbf{0} = 23$ which by Proposition~\ref{prop:ts_intro} shows that $[J]\notin \Slip_{4, \PP^3\times \PP^3}$ and hence $[I]\notin \Slip_{4, X}$.
\end{example}

The developed criteria are applied to classify the reducible cases of $\Hilb_{S[X]}^{h_{r,X}}$ where $X$ is the product of projective spaces.

\begin{theorem}[Theorem~\ref{thm:classification}]
Let $r, d, n_1, \ldots, n_d$ be positive integers and $X=\PP^{n_1}\times \cdots \times \PP^{n_d}$.
The scheme $\Hilb_{S[X]}^{h_{r, X}}$ is irreducible if and only if one of the following holds:
\begin{enumerate}
\item $r=1$;
\item $d=1$ and $n_1 = 1$;
\item $d=1$ and $r\leq 3$.
\end{enumerate}
\end{theorem}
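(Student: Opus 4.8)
The plan is to prove the two directions separately. For the "if" direction, I would dispose of the three cases using known results: when $r=1$, the multigraded Hilbert scheme parametrizes ideals of single points, and $\Slip_{1,X}$ is easily checked to be the whole scheme (the Hilbert function $h_{1,X}$ forces the ideal to be the ideal of a point or a degeneration thereof, and a short argument with saturation shows every such ideal is in the closure of radical ones; alternatively this is already in the literature); when $d=1$ and $n_1=1$ the variety $X=\PP^1$ has Cox ring a polynomial ring in two variables and $\Hilb_{S[\PP^1]}^{h_{r,\PP^1}}$ is a single point (the Hilbert function forces $I$ to be generated in one degree by the full space of forms, hence rigid), so irreducibility is trivial; and when $d=1$, $n_1 \geq 1$ arbitrary and $r\leq 3$, irreducibility of $\Hilb_{S[\PP^n]}^{h_{r,\PP^n}}$ follows from the classical statement that the Hilbert scheme of $r\leq 3$ points in $\PP^n$ is irreducible, combined with \cite[Prop.~3.1]{CEVV09} or a direct comparison of the multigraded Hilbert scheme with the classical one (every ideal with this Hilbert function is saturated in high degree, so the two schemes agree up to the standard reduction).

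For the "only if" direction, I would assume we are in none of the three cases and exhibit a point of $\Hilb_{S[X]}^{h_{r,X}}$ not lying in $\Slip_{r,X}$. There are two subcases to handle. If $d=1$, then $n_1\geq 1$ and $r\geq 4$; here irreducibility fails by the classical fact that $\Hilb^r(\PP^n)$ is reducible for $r\geq 4$ once $n\geq 3$ (the Iarrobino example), and for $n=2$ one still has reducibility of the punctual component considerations for $r$ large—so the genuinely new content is that for $d=1$, $n_1\geq 2$ and $r=4,\ldots$ reducibility holds; I would cite \cite{CEVV09} for $\PP^n$ with $n\geq 3$ and handle $\PP^2$ via the explicit fat-point or curvilinear-versus-non-curvilinear argument, noting $4$ points on $\PP^2$ is actually irreducible so in fact for $d=1,n_1=2$ we need $r\geq \text{(the first reducible value)}$—this requires care and I suspect the intended reading is that $n_1=1$ or $r\leq 3$ are the only irreducible sub-cases in dimension one, using that $\Hilb^r(\PP^2)$ is always irreducible but the relevant comparison still produces reducibility; I would reconcile this against \cite{JM22}. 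The main case is $d\geq 2$ with $r\geq 2$: here I would construct, for $X=\PP^{n_1}\times\cdots\times\PP^{n_d}$, an explicit monomial (Borel-fixed) ideal $I$ with Hilbert function $h_{r,X}$ whose tangent space at $[I]$ is too large, applying Proposition~\ref{prop:ts_intro} (Corollary~\ref{cor:ts_products}): one shows $\dim_\CC \Hom_S(I+\mathfrak a_i^2, S/(I+\mathfrak a_i^2))_{\mathbf 0} < r\cdot\dim X$ for some $i$. A natural candidate is an ideal supported "concentrated" on one factor, e.g. the product structure coming from $r$ points spread in $\PP^{n_1}$ but with a non-reduced/non-curvilinear thickening in the second factor, mimicking Example~\ref{ex:explicit}.

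The key steps in order: (1) reduce irreducibility of the multigraded Hilbert scheme to that of a classical Hilbert scheme of points in the cases where the comparison map of \cite{CEVV09} applies; (2) prove the three "if" cases; (3) for $d=1$, invoke/adapt the reducibility of $\Hilb^r(\PP^n)$; (4) for $d\geq 2$, produce a uniform family of monomial ideals $I_{r,\mathbf n}\in\Hilb_{S[X]}^{h_{r,X}}$ and compute the relevant $\Hom$-space dimension, showing it violates the bound in Proposition~\ref{prop:ts_intro}; (5) conclude that such $[I]$ is not in $\Slip_{r,X}$, so the scheme is reducible. I expect the main obstacle to be step (4): constructing the monomial ideals with the correct Hilbert function $h_{r,X}$ for \emph{all} $d\geq 2$ and \emph{all} $r\geq 2$ and $n_i\geq 1$ simultaneously, and carrying out the tangent-space dimension count in enough generality to always beat $r\cdot\dim X$. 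A likely strategy is to take $I$ to be (essentially) the ideal of $r$ general points on a single coordinate $\PP^{n_1}$ inside $X$, whose saturation with respect to the other irrelevant ideals introduces the discrepancy; the bookkeeping of minimal generators across the $\binom{d}{2}$-many "mixed" degrees—paralleling the count in Example~\ref{ex:projections}—is where the computation becomes delicate, and one may need the embedding criterion of Theorem~\ref{thm:embd_intro} or the fibration criterion of Theorem~\ref{thm:fibration_intro} as an alternative route when the tangent-space bound is not by itself decisive.
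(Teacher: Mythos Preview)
Your ``if'' direction is essentially correct in outline, modulo one error: $\Hilb_{S[\PP^1]}^{h_{r,\PP^1}}$ is not a single point but is isomorphic to $\mathcal{H}ilb_r(\PP^1)\cong\PP^r$ (via \cite[Lem.~4.1]{HS04}); the conclusion survives since this is still irreducible.

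The ``only if'' direction, however, has two genuine gaps. For $d=1$, $n_1\geq 2$, $r\geq 4$, you propose to invoke reducibility of the classical Hilbert scheme $\mathcal{H}ilb_r(\PP^n)$, but that scheme is \emph{irreducible} for all $r$ when $n\leq 2$ (Fogarty) and for $r\leq 7$ in any dimension, so the approach fails already for $(n,r)=(2,4)$. The multigraded Hilbert scheme has strictly more points than the classical one (it records non-saturated ideals as well), and the paper instead constructs an explicit ideal $[I]$ with $I^{\sat}=(\alpha_0,\ldots,\alpha_{n-2},\alpha_{n-1}^r)$ and shows $[I]\notin\Slip_{r,\PP^n}$ via \cite[Thm.~2.7]{Man20} (Proposition~\ref{prop:classification_of_reducible_MGHS_pn}). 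For $d\geq 2$, the main missing idea is that the paper does \emph{not} construct ideals uniformly for all $(r,d,n_1,\ldots,n_d)$---the step you correctly flag as the main obstacle. Instead it uses that the projection $\pi\colon\Hilb_{S[X]}^{h_{r,X}}\to\Hilb_{S[Y]}^{h_{r,Y}}$ (for $Y$ a single factor or a product of two factors) is surjective on the whole multigraded Hilbert scheme (Proposition~\ref{prop:projections_from_product_are_surjective}) and sends $\Slip_{r,X}$ into $\Slip_{r,Y}$ (Corollary~\ref{cor:projection_criterion}); together these imply that reducibility of the target forces reducibility of the source. This reduces everything to a short list of base cases: $\PP^n$ with $n\geq 2$, $r\geq 4$; $\PP^1\times\PP^1$ with $r\geq 4$ (handled via the embedding criterion, Example~\ref{exa:p1_p1_is_reducible}); and $\PP^m\times\PP^n$ with $r\in\{2,3\}$ (handled via the tangent-space criterion, Examples~\ref{exa:2pts}~and~\ref{exa:3pts}). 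You mention the fibration criterion only as a last-resort fallback, but in the paper it is the backbone of the argument.
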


\subsection{Strengths of the criteria}
Our criteria have three strengths. For simplicity of their discussion we assume that $X=\PP^{n_1}\times \cdots \times \PP^{n_d}$ is embedded into $\PP^N$ by the Segre embedding. While the third benefit might loose its importance when more tools are developed for studying $\Slip_{r,X}$, it seems that the first two will remain significant. 

\subsubsection{Lowered time of computations}
As illustrated in Examples~\ref{ex:projections},~\ref{ex:segre}~and~\ref{ex:explicit} the results from Theorems~\ref{thm:fibration_intro} and \ref{thm:embd_intro} are of the form that we replace a more complicated ideal $I$ by a less complicated ideal $J$.
    If we are able to show that $[J]$ is not in the corresponding $\Slip$ component then neither is $[I]$. Suppose that we have some criteria for being in $\Slip$ that are easy to implement in a computer algebra system but the computation time of their verification is dependent on the complexity of the ideal. One example of such a criterion is \cite[Thm.~3.4]{JM22}. Another example is Proposition~\ref{prop:ts_intro}. Assume that we have a set of ideals in $\Hilb_{S[X]}^{h_{r,X}}$ all of which are contained in $\Ann(T)$ for some tensor $T \in \CC^{N+1}$ and we want to show that none of these ideals is in $\Slip_{r,X}$.  Instead of verifying the criteria for the ideals in $S[X]$ we may first verify these criteria for the corresponding ideals in the Cox rings of lower dimensional varieties. This should lead to significantly faster computations and could shorten the list of ideals that we want to consider more deeply.
\subsubsection{Restrictions on apolar ideals worth considering}
If we want to show that a tensor $T\in \CC^{N+1}$ has border rank greater than $r$ then, by border apolarity \cite[Thm.~3.15]{BB19}, we should construct all apolar ideals contained in $\Ann(T)$ that have the correct Hilbert function and verify that none of them is in $\Slip_{r,X}$. Typically there are many such ideals. A fundamental result of the border apolarity theory is \cite[Thm~4.3]{BB19} which allows to limit the set of ideals under consideration by taking advantage of the symmetries of $T$. Criteria from Theorems~\ref{thm:fibration_intro}~and~\ref{thm:embd_intro} as well as Proposition~\ref{prop:ts_intro} provide another way of limiting the set of candidate ideals. In each case, we replace an ideal $I$ by a different ideal $J$ which in fact depends only on some degrees of $I$. Therefore, we are sometimes able to show that $[I]\notin \Slip_{r,X}$ even without constructing $I$ in all degrees.
\subsubsection{The case of $\PP^n$ is better understood}
The final benefit of the criteria from Theorems~\ref{thm:fibration_intro}~and~\ref{thm:embd_intro} is that we can reduce showing that $[I]\notin \Slip_{r,X}$ to showing that $[J]\notin \Slip_{r, \PP^n}$. The case of $\Slip_{r,\PP^n}$ is for the time being better understood than the general case.

%

\subsubsection*{Acknowledgments}
The initial versions of some of the results in this paper come from my PhD thesis which was supervised by Jarosław Buczyński and Joachim Jelisiejew. I am grateful to them for 
introducing me to the subject and many conversations and explanations.
I would like to thank Alessandra Bernardi, Jarosław Buczyński, Joachim Jelisiejew and Filip Rupniewski for providing valuable suggestions on how to improve the presentation.
I would like to thank the National Science Center for supporting my research (project number 2019/33/N/ST1/00858). 
Computations were done in Macaulay2 \cite{M2}.

\section{A natural map of multigraded Hilbert schemes}

We state our main observation of this section in a more abstract setting than we need to highlight its generality. The two main cases of interest are described in Subsections~\ref{sub:fibration} and \ref{sub:embedding}. 

Let $\Bbbk$ be a field. Assume that $S$ is a polynomial algebra over $\Bbbk$ graded by an abelian group $A$ with a chosen homogeneous ideal $B$. Let $H\colon A\to \mathbb{N}$ be a Hilbert function. We consider the multigraded Hilbert scheme $\Hilb_S^H$ which parametrizes all $A$-homogeneous ideals in $S$ such that the quotient algebra has Hilbert function $H$. We denote by $\Satbar^H$ the closure of the locus of those ideals in  $\Hilb_S^H$ that are $B$-saturated.
By $\Slip^H$ we denote the closure of the locus of those radical ideals in $\Hilb_S^H$ that are $B$-saturated.
Note that it may happen that these sets are empty.

\begin{proposition}\label{prop:main_observation}
Assume that $S$ and $T$ are polynomial rings over a field $\Bbbk$ graded by abelian groups $A_S$ and $A_T$ respectively. Assume that $B_S$ and $B_T$ are distinguished homogeneous ideals in $S$ and $T$, respectively.
Let $\phi\colon A_S \to A_T$ be a group homomorphism and let $\varphi\colon S\to T$ be a graded $\Bbbk$-algebra homomorphism with respect to $\phi$, i.e. $\varphi(S_\bfa)\subseteq T_{\phi(\bfa)}$ for every $\bfa\in A_S$. The following hold:
\begin{enumerate}[label = (\alph*)]
\item\label{it:prop_p1} if for every $\bfa\in A_S$, $\varphi_\bfa\colon S_\bfa \to T_{\phi(\bfa)}$ is surjective then for every Hilbert function $H\colon A_T\to \mathbb{N}$ there is a natural morphism $\pi\colon \Hilb^H_T \to \Hilb_S^{H\circ \phi}$ given  by $[I]\mapsto [\varphi^{-1}(I)]$;
\item\label{it:prop_p2}  if furthermore $S$ and $T$ are positively graded and $B_T \subseteq \sqrt{\varphi(B_S)\cdot T}$, then $\pi$ maps every $B_T$-saturated ideal to a $B_S$-saturated ideal. Thus, it maps
$\Satbar^H$ into $\Satbar^{H\circ \phi}$ and $\Slip^H$ into $\Slip^{H\circ \phi}$.
\end{enumerate}
\end{proposition}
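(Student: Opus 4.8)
\textbf{Proof plan for Proposition~\ref{prop:main_observation}.}

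The plan is to handle the two parts separately, building on the functor-of-points description of the multigraded Hilbert scheme from \cite{HS04}. For part~\ref{it:prop_p1}, I would first recall that for a $\Bbbk$-algebra $R$, an $R$-point of $\Hilb_T^H$ is an $A_T$-homogeneous ideal $I\subseteq T\otimes_\Bbbk R$ such that $(T\otimes R)_\bfa/I_\bfa$ is locally free of rank $H(\bfa)$ for every $\bfa\in A_T$; similarly for $\Hilb_S^{H\circ\phi}$. Given such an $I$, I would set $J=\varphi_R^{-1}(I)$ where $\varphi_R=\varphi\otimes\mathrm{id}_R\colon S\otimes R\to T\otimes R$; this is automatically $A_S$-homogeneous since $\varphi_R$ is graded. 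The key point is that in each degree $\bfa\in A_S$ the surjectivity of $\varphi_\bfa$ (hence of $(\varphi_R)_\bfa$) gives a short exact sequence $0\to J_\bfa\to (S\otimes R)_\bfa\to (T\otimes R)_{\phi(\bfa)}/I_{\phi(\bfa)}\to 0$, which exhibits $(S\otimes R)_\bfa/J_\bfa\cong (T\otimes R)_{\phi(\bfa)}/I_{\phi(\bfa)}$, so it is locally free of rank $H(\phi(\bfa))=(H\circ\phi)(\bfa)$. Thus $[J]$ is a well-defined $R$-point of $\Hilb_S^{H\circ\phi}$, and this assignment is manifestly functorial in $R$ (it commutes with base change because $\varphi_\bfa$ surjective implies the kernel formation commutes with the flat — here just a quotient-module — base change along $R\to R'$; the exact sequence above base changes well precisely because the quotient is flat). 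By Yoneda this defines the morphism $\pi$, and on $\Bbbk$-points it is $[I]\mapsto[\varphi^{-1}(I)]$.

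For part~\ref{it:prop_p2}, the reduction is to show that if $I\subseteq T$ is a ($\Bbbk$-point, so $R=\Bbbk$) $B_T$-saturated ideal then $\varphi^{-1}(I)\subseteq S$ is $B_S$-saturated; then the statement about $\Satbar$ follows because $\pi$ is continuous (a morphism of schemes) and maps the dense $B_T$-saturated locus of $\Satbar^H$ into the $B_S$-saturated locus of $\Hilb_S^{H\circ\phi}$, hence maps $\Satbar^H$ into its closure $\Satbar^{H\circ\phi}$; and the $\Slip$ statement follows the same way once we note that $\varphi^{-1}$ of a radical ideal is radical, so the radical $B_T$-saturated locus maps into the radical $B_S$-saturated locus. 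So the heart of the matter is the saturation claim. Recall $I$ is $B_T$-saturated means $(I:B_T^\infty)=I$, equivalently for each homogeneous $f$, if $B_T^N f\subseteq I$ for some $N$ then $f\in I$. Take homogeneous $g\in S$ with $B_S^N g\subseteq\varphi^{-1}(I)$, i.e.\ $\varphi(B_S^N g)\subseteq I$, i.e.\ $\varphi(B_S)^N\varphi(g)\subseteq I$, hence $(\varphi(B_S)\cdot T)^N\varphi(g)\subseteq I$. I want to conclude $\varphi(g)\in I$, for which it suffices that $B_T^M\varphi(g)\subseteq I$ for some $M$. This is where the hypothesis $B_T\subseteq\sqrt{\varphi(B_S)\cdot T}$ enters together with positivity of the gradings: positivity ensures $T$ (and $S$) are Noetherian and that the relevant ideals are finitely generated, so $B_T\subseteq\sqrt{\varphi(B_S)\cdot T}$ gives a single $k$ with $B_T^k\subseteq\varphi(B_S)\cdot T$; then $B_T^{kN}\varphi(g)\subseteq(\varphi(B_S)\cdot T)^N\varphi(g)\subseteq I$, and $B_T$-saturation of $I$ yields $\varphi(g)\in I$, i.e.\ $g\in\varphi^{-1}(I)$, as desired.

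The main obstacle — or at least the step requiring the most care — is the functoriality and base-change compatibility in part~\ref{it:prop_p1}: one must check that forming $J=\varphi_R^{-1}(I)$ really commutes with arbitrary base change $R\to R'$, so that $\pi$ is a genuine natural transformation of functors and not just a map on $\Bbbk$-points. The clean way to see this is via the degreewise exact sequence $0\to J_\bfa\to (S\otimes R)_\bfa\to (T\otimes R)_{\phi(\bfa)}/I_{\phi(\bfa)}\to 0$: since the right-hand term is $R$-locally free (in particular $R$-flat), tensoring with $R'$ keeps it exact, identifying $J_\bfa\otimes_R R'$ with $(J\otimes_R R')_\bfa$. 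A secondary subtlety is making sure that positivity of the gradings is exactly what is needed to pass from the radical containment $B_T\subseteq\sqrt{\varphi(B_S)\cdot T}$ to a uniform power, and that the $\Slip$ statement genuinely needs no extra hypothesis beyond "$\varphi^{-1}$ preserves radicality", which is immediate. Everything else is routine bookkeeping with graded ideals.
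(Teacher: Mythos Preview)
Your proposal is correct and follows essentially the same approach as the paper: both verify the functor-of-points description degree by degree using surjectivity of $\varphi_\bfa$, and both reduce part~\ref{it:prop_p2} to the saturation claim and prove it via a uniform power $B_T^k \subseteq \varphi(B_S)\cdot T$. The only minor differences are that you argue base-change compatibility via flatness of the quotient (the paper instead observes that one ideal contains the other and both have locally free quotients of the same rank), and that the paper invokes positivity to get projectivity of the Hilbert schemes (hence $\pi$ closed) rather than Noetherianity---which holds for polynomial rings regardless---but since your closure argument only needs continuity, this does not affect correctness.
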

\begin{proof}
Let $R$ be a $\Bbbk$-algebra and let $I_R$ be a $\Spec (R)$-point of $\Hilb_T^H$, i.e. $I_R\subseteq T\otimes_\Bbbk R$ is an $A_T$-homogeneous ideal
 such that for every $\mathbf{b}\in A_T$ the $R$-module $\left(\frac{T\otimes_{\Bbbk} R}{I_R} \right)_\mathbf{b}$ is locally free of rank
 $H(\mathbf{b})$. We show that $(\varphi\otimes_\Bbbk \operatorname{id}_R)^{-1}(I_R)$ is a $\Spec (R)$-point of $\Hilb_S^{H\circ \phi}$.
Pick $\bfa \in A_S$. The map $\varphi_{\bfa} \otimes_\Bbbk \operatorname{id}_R \colon S_\bfa\otimes_\Bbbk R \to T_{\phi(\bfa)}\otimes_\Bbbk R$ is surjective. Therefore, 
\[
\frac{S_\bfa \otimes_\Bbbk R}{((\varphi\otimes_\Bbbk \operatorname{id}_R)^{-1}(I_R))_{\bfa}} = \frac{S_\bfa \otimes_\Bbbk R}{(\varphi_{\bfa} \otimes_\Bbbk \operatorname{id}_R)^{-1}((I_R)_{\phi(\bfa)})} \cong \frac{T_{\phi(\bfa)}\otimes_{\Bbbk} R}{(I_R)_{\phi(\bfa)}}.
\]
Hence it is a locally free $R$-module of rank $H(\phi(\bfa))$. Therefore, $I_R\mapsto (\varphi\otimes_\Bbbk \operatorname{id}_{R})^{-1}(I_R)$
defines a map of sets of $\Spec(R)$-points of these schemes $\Hilb^H_{T}(R) \to \Hilb^{H\circ \phi}_S(R)$. To show that we have defined a natural transformation of functors of points, 
consider a homomorphism $\lambda\colon R\to R'$ of $\Bbbk$-algebras and $I_R\in \Hilb^H_{T}(R)$. 
We need to verify that $\left((\varphi\otimes_\Bbbk \operatorname{id}_R)^{-1}(I_R)\right)\otimes_R R'$ and $(\varphi\otimes_\Bbbk \operatorname{id}_{R'})^{-1}(I_R\otimes_R R')$ 
are the same ideal in $S\otimes_\Bbbk R'$. By the above, we have that for every $\bfa \in A_S$, the degree $\bfa$ parts of both of these ideals are locally free $R'$-modules of the same rank. Furthermore, we have $\left((\varphi\otimes_\Bbbk \operatorname{id}_R)^{-1}(I_R)\right)\otimes_R R' \subseteq (\varphi\otimes_\Bbbk \operatorname{id}_{R'})^{-1}(I_R\otimes_R R')$. Therefore, these ideals coincide. This finishes the proof of part~\ref{it:prop_p1}.

We proceed to the proof of the second part of the proposition. Since $S$ and $T$ are positively graded, the schemes $\Hilb^H_T$ and $\Hilb^{H\circ \phi}_S$ are projective by \cite[Cor.~1.2]{HS04}. In particular, $\pi$ is a closed map. Since the preimage of a radical ideal is a radical ideal it is enough to show that under the assumptions from part~\ref{it:prop_p2} the preimage of a $B_T$-saturated ideal is a $B_S$-saturated ideal.
Since $T$ is a noetherian ring we have $B_T^k \subseteq \varphi(B_S)\cdot T$ for some $k\in \NN$. Let $I$ be a $B_T$-saturated homogeneous ideal of $T$
and assume that $g\in (\varphi^{-1}(I)\colon B_S)$. We have
\[
\varphi(g)\cdot B_T^k \subseteq \varphi(g)\cdot \varphi(B_S)\cdot T = \varphi(g\cdot B_S)\cdot T \subseteq \varphi(\varphi^{-1}(I))\cdot T \subseteq I.
\]
Therefore, $\varphi(g)\in I$ since $I$ is $B_T$-saturated. It follows that $g\in \varphi^{-1}(I)$, so $\varphi^{-1}(I)$ is $B_S$-saturated.
\end{proof}

\subsection{Surjective map with connected fibers}\label{sub:fibration}

In this subsection we restrict our attention to smooth projective complex toric varieties and apply Proposition~\ref{prop:main_observation} in a more geometric setting. We need to recall the quotient construction of a toric variety. This also allows us to fix some notation.

\begin{remark}\label{rmk:quotient_construction}
Let $Z$ be a smooth complex projective toric variety. Let $\overline{Z} = \Spec S[Z]$. Since $S[Z]$ is $\Pic(Z)$-graded, there is a natural action of the corresponding torus $\mathbb{T}_Z = \Spec \CC[\Pic(Z)]$ on $\overline{Z}$. Furthermore, $B(Z)$ is a homogeneous ideal so the action restricts to an action on the open subset $\widehat{Z}=\overline{Z}\setminus V(B(Z))$. The variety $Z$ is the geometric quotient $\pi_Z \colon \widehat{Z} \to Z$ of $\widehat{Z}$ by this action. This is presented in \cite[Thm.~5.1.11]{CLS11}. 
We  denote the inclusion $\widehat{Z} \subseteq \overline{Z}$ by $i_Z$.
\end{remark}

We use the following notion of a lift of a morphism to Cox rings.

\begin{definition}\label{def:lift}
Suppose that $f\colon X\to Y$ is a morphism of smooth projective complex toric varieties and let $\phi\colon \Pic(Y)\to \Pic(X)$ be the pullback map. We call a $\CC$-algebra homomorphism $\varphi \colon S[Y]\to S[X]$ a lift of $f$ if the following conditions are satisfied:
\begin{enumerate}[label=(\roman*)]
\item\label{it:deflift1} $\varphi$ is a graded homomorphism of graded rings with respect to $\phi$, i.e. $\varphi (S[Y]_{[D]})\subseteq S[X]_{\phi([D])}$ for every $[D]\in \Pic(Y)$;
\item\label{it:deflift2} the corresponding morphism of affine spaces $\overline{f} \colon \overline{X} \to \overline{Y}$ restricts to a morphism $\widehat{f} \colon \widehat{X} \to \widehat{Y}$;
\item\label{it:deflift3} $\pi_Y\circ \widehat{f} = f\circ \pi_X$.
\end{enumerate}
\end{definition}

We restate condition \ref{it:deflift2} of Definition~\ref{def:lift} algebraically.

\begin{lemma}\label{lem:characterization_of_property_2}
Let $f\colon X\to Y$ be a morphism of smooth projective complex toric varieties. A homomorphism of $\CC$-algebras  $\varphi\colon S[Y]\to S[X]$ satisfies condition~\ref{it:deflift2} of Definition~\ref{def:lift} if and only if $B(X) \subseteq \sqrt{\varphi(B(Y))\cdot S[X]}$.
\end{lemma}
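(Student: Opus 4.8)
The plan is to unravel the definitions on both sides and translate the condition ``$\overline{f}$ restricts to a morphism $\widehat{X}\to\widehat{Y}$'' into a containment of vanishing loci, which then dualizes to the stated radical containment of ideals. First I would recall that $\overline{f}\colon\overline{X}\to\overline{Y}$ is the morphism of affine spaces $\Spec S[X]\to\Spec S[Y]$ induced by $\varphi$, and that $\widehat{X}=\overline{X}\setminus V(B(X))$ and $\widehat{Y}=\overline{Y}\setminus V(B(Y))$ are the open subsets obtained by removing the closed subschemes cut out by the irrelevant ideals. The key elementary observation is that $\overline{f}$ restricts to a morphism $\widehat{X}\to\widehat{Y}$ if and only if $\overline{f}(\widehat{X})\subseteq\widehat{Y}$, i.e. if and only if $\overline{f}^{-1}(V(B(Y)))\subseteq V(B(X))$ as subsets (equivalently as closed sets) of $\overline{X}$.

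Next I would compute $\overline{f}^{-1}(V(B(Y)))$ scheme-theoretically, or at least set-theoretically, which is all we need. Since $\overline{f}$ corresponds to $\varphi$, the ideal of the scheme-theoretic preimage $\overline{f}^{-1}(V(B(Y)))$ is the extension $\varphi(B(Y))\cdot S[X]$, so as a closed subset it is $V\bigl(\varphi(B(Y))\cdot S[X]\bigr)$. Therefore the containment of closed subsets $\overline{f}^{-1}(V(B(Y)))\subseteq V(B(X))$ reads $V\bigl(\varphi(B(Y))\cdot S[X]\bigr)\subseteq V(B(X))$. By the Nullstellensatz (working over $\CC$, or just using that $S[X]$ is a finitely generated algebra over an algebraically closed field and these are radical-ideal conditions on reduced closed sets), this inclusion of vanishing loci is equivalent to the reverse inclusion of radicals, namely $\sqrt{B(X)}\subseteq\sqrt{\varphi(B(Y))\cdot S[X]}$, and since $B(X)\subseteq\sqrt{B(X)}$ this is in turn equivalent to $B(X)\subseteq\sqrt{\varphi(B(Y))\cdot S[X]}$. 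Chaining these equivalences gives the lemma.

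There is one subtlety worth spelling out carefully: condition~\ref{it:deflift2} asks not merely that the image of $\widehat{X}$ lands in $\widehat{Y}$ as a set, but that the restriction is a morphism of schemes; however this is automatic, because the restriction of a morphism to an open subscheme of the source landing inside an open subscheme of the target is always again a morphism. So once the set-theoretic containment $\overline{f}(\widehat{X})\subseteq\widehat{Y}$ holds, the induced map $\widehat{f}$ exists as a morphism and is the unique one making the evident triangle commute. A second, minor point is that we should note $\varphi$ need not a priori be homogeneous for this lemma --- we only use that it is a $\CC$-algebra homomorphism, so that $\overline{f}$ is defined --- matching the statement.

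I expect the main (and only real) obstacle to be bookkeeping: making sure the passage between ``$\overline{f}$ restricts to a morphism $\widehat{X}\to\widehat{Y}$'', ``$\overline{f}(\widehat{X})\subseteq\widehat{Y}$'', and ``$\overline{f}^{-1}(V(B(Y)))\subseteq V(B(X))$'' is stated as a genuine if-and-only-if, including the automatic-morphism remark above, and then applying the Nullstellensatz in the correct direction. None of these steps is computationally heavy; the content is entirely in translating a geometric restriction condition into commutative algebra, so the proof should be short.
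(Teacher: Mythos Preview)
Your proposal is correct and follows essentially the same route as the paper: translate condition~\ref{it:deflift2} into $\overline{f}^{-1}(V(B(Y)))\subseteq V(B(X))$, identify the preimage as $V(\varphi(B(Y))\cdot S[X])$, and dualize via the Nullstellensatz. The only cosmetic difference is in the last step: the paper observes that $B(X)$ is a square-free monomial ideal and hence already radical, whereas you use the general equivalence $I\subseteq\sqrt{J}\iff\sqrt{I}\subseteq\sqrt{J}$; both are fine.
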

\begin{proof}
The morphism $\overline{f}\colon \overline{X}\to \overline{Y}$ restricts to a morphism $\widehat{f} \colon \widehat{X}\to \widehat{Y}$ if and only if $\overline{f}^{-1}(V(B(Y))) \subseteq V(B(X))$. Since $\overline{f}^{-1}(V(B(Y))) = V(\varphi(B(Y)))$ we conclude that $\varphi$ satisfies condition~\ref{it:deflift2} of Definition~\ref{def:lift} if and only if $V(\varphi(B(Y)))\subseteq V(B(X))$ which is equivalent to $\sqrt{B(X)}\subseteq \sqrt{\varphi(B(Y))\cdot S[X]}.$
Since the ideal $B(X)$ is a square-free monomial ideal we have $\sqrt{B(X)} = B(X)$ which finishes the proof.
\end{proof}

The next lemma concerns properties of a lift of a surjective morphism with connected fibers.

\begin{lemma}\label{lem:connected_fibers_properties}
Let $f\colon X\to Y$ be a morphism of smooth projective complex toric varieties such that $f_*\mathcal{O}_X \cong \mathcal{O}_Y$. Let $\phi\colon \Pic(Y)\to \Pic(X)$ be the pullback and $\varphi \colon S[Y] \to S[X]$ be a lift of $f$.
\begin{enumerate}[label=(\alph*)]
\item\label{it:con_fib_lem_1} For every $[D]\in \Pic(Y)$, the vector spaces $S[Y]_{[D]}$ and $S[X]_{\phi([D])}$ have the same dimension.
\item\label{it:con_fib_lem_2} The pullback map $\phi\colon \Pic(Y)\to \Pic(X)$ is injective.
\item\label{it:con_fib_lem_3} For every $[D]\in \Pic(Y)$ the homomorphism $\varphi$ induces an isomorphism of the $\CC$-vector spaces $S[Y]_{[D]} \to S[X]_{\phi([D])}$. 
\end{enumerate}
\end{lemma}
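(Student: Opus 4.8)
The three parts are logically linked, so the plan is to establish (a) first, deduce (b) as a consequence, and then upgrade (a) to (c). For part (a), the key point is the standard fact that global sections of line bundles on a toric variety are computed by the homogeneous pieces of the Cox ring: for a smooth projective toric variety $Z$ we have $S[Z]_{[D]} \cong H^0(Z, \mathcal{O}_Z(D))$ (this is in \cite[Prop.~5.3.7]{CLS11}, which the paper already relies on). Given the hypothesis $f_*\mathcal{O}_X \cong \mathcal{O}_Y$, the projection formula yields $f_*(f^*\mathcal{O}_Y(D)) \cong \mathcal{O}_Y(D) \otimes f_*\mathcal{O}_X \cong \mathcal{O}_Y(D)$, hence
\[
H^0(X, \mathcal{O}_X(\phi([D]))) = H^0(X, f^*\mathcal{O}_Y(D)) \cong H^0(Y, f_*f^*\mathcal{O}_Y(D)) \cong H^0(Y, \mathcal{O}_Y(D)).
\]
Combining with the Cox-ring identification on both $X$ and $Y$ gives $\dim_\CC S[Y]_{[D]} = \dim_\CC S[X]_{\phi([D])}$, which is part (a).

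For part (b), I would argue that $\phi$ is injective on the effective cone and then extend. Concretely: if $\phi([D]) = \phi([D'])$ then $\phi([D - D']) = 0$, so $S[X]_{\phi([D-D'])} = S[X]_{[0]} = \CC$; by part (a) applied to $[D-D']$ we get $\dim_\CC S[Y]_{[D-D']} = 1$, and similarly applying part (a) to $[D'-D]$ gives $\dim_\CC S[Y]_{[D'-D]} = 1$. On a projective variety the only line bundle with a nonzero global section whose inverse also has a nonzero global section is $\mathcal{O}_Y$, so $[D - D'] = 0$ in $\Pic(Y)$. Hence $\phi$ is injective. (Here I use that $\Pic(Y)$ is the group generated by the effective classes, which holds for smooth projective toric varieties.)

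For part (c), by Definition~\ref{def:lift}\ref{it:deflift1} the map $\varphi$ restricts to a $\CC$-linear map $S[Y]_{[D]} \to S[X]_{\phi([D])}$, and by part (a) the two sides have equal finite dimension, so it suffices to prove injectivity of this restriction. For this I would use the geometric content of the lift, conditions \ref{it:deflift2} and \ref{it:deflift3} of Definition~\ref{def:lift}: the map $\widehat{f}\colon \widehat X \to \widehat Y$ covers $f$, and $f_*\mathcal O_X \cong \mathcal O_Y$ forces $f$ (hence $\widehat f$, since $\pi_X, \pi_Y$ are surjective) to be dominant. A homogeneous element $s \in S[Y]_{[D]}$ with $\varphi(s) = 0$ would pull back to the zero section on $\widehat X$; since $\widehat f$ is dominant and $S[Y]_{[D]}\hookrightarrow \CC[\widehat Y]$ (as $\widehat Y = \overline Y \setminus V(B(Y))$ and $B(Y)$ has codimension $\geq 2$, so $\CC[\widehat Y] = S[Y]$), this forces $s = 0$. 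Thus $\varphi$ is injective in degree $[D]$, and being a linear injection between spaces of equal dimension, it is an isomorphism.

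The main obstacle I anticipate is part (c): parts (a) and (b) are essentially formal once one invokes the projection formula and the Cox-ring description of $H^0$, but (c) genuinely requires the geometric hypotheses on the lift (properties \ref{it:deflift2}, \ref{it:deflift3}), not just the dimension count — without them $\varphi$ could collapse a degree piece to zero even when the dimensions happen to match only after the hypothesis $f_*\mathcal O_X \cong \mathcal O_Y$. The careful point is to translate ``$\varphi(s) = 0$'' into a statement about vanishing of a section on the dominant image of $\widehat f$, which is where the commuting square $\pi_Y \circ \widehat f = f \circ \pi_X$ is used.
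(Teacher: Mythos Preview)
Your arguments for parts~\ref{it:con_fib_lem_1} and~\ref{it:con_fib_lem_2} are correct. Part~\ref{it:con_fib_lem_1} is exactly the paper's argument. For part~\ref{it:con_fib_lem_2} you take a slightly different route---using part~\ref{it:con_fib_lem_1} to see that both $\mathcal{O}_Y(D-D')$ and its inverse have a nonzero section---whereas the paper argues directly from the projection formula that $f^*\mathcal{O}_Y(D)\cong\mathcal{O}_X$ forces $\mathcal{O}_Y(D)\cong f_*f^*\mathcal{O}_Y(D)\cong f_*\mathcal{O}_X\cong\mathcal{O}_Y$. Both are fine.

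The gap is in part~\ref{it:con_fib_lem_3}, specifically the claim that $\widehat f$ is dominant. You write that $f$ is dominant and ``hence $\widehat f$, since $\pi_X,\pi_Y$ are surjective.'' But the commuting square $\pi_Y\circ\widehat f=f\circ\pi_X$ only tells you that $\pi_Y\big(\widehat f(\widehat X)\big)=Y$; it does \emph{not} force $\widehat f(\widehat X)$ to be dense in $\widehat Y$. The fibers of $\pi_Y$ are $\mathbb{T}_Y$-orbits, and a priori $\widehat f(\widehat X)$ could meet each orbit in a proper subset while still surjecting onto $Y$. The paper closes this gap by using part~\ref{it:con_fib_lem_2}: injectivity of $\phi$ makes the induced map of tori $\mathbb{T}_X\to\mathbb{T}_Y$ surjective, and since $\widehat f$ is equivariant for this homomorphism, any point $\widehat y\in\widehat Y$ can be reached by first finding $\widehat x$ with $\widehat f(\widehat x)$ in the same $\mathbb{T}_Y$-orbit as $\widehat y$ and then translating by an element of $\mathbb{T}_X$. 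That is the missing ingredient; once you have it, your reduction to injectivity of $\varphi$ and the dimension count from part~\ref{it:con_fib_lem_1} finishes the proof exactly as the paper does.
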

\begin{proof}
By \cite[Prop.~5.3.7]{CLS11}, there are isomorphisms 
\[
S[X]_{\phi([D])}\cong H^0(X, f^*\mathcal{O}_Y(D)) \text{ and } [Y]_{[D]}\cong H^0(Y, \mathcal{O}_Y(D)).
\]
Therefore, to prove part~\ref{it:con_fib_lem_1} it is sufficient to show that $H^0(X, f^*\mathcal{O}_Y(D)) = H^0(Y, f_*f^*(\mathcal{O}_Y(D)))$ is isomorphic with $H^0(Y, \mathcal{O}_Y(D))$.
By the projection formula we have 
\[
H^0(Y, f_*(f^*\mathcal{O}_Y(D)))\cong H^0(Y, f_*(\mathcal{O}_X)\otimes \mathcal{O}_Y(D))
\]
which is isomorphic to $H^0(Y, \mathcal{O}_Y(D))$ by the assumption that $f_*\mathcal{O}_X\cong \mathcal{O}_Y$.

Part~\ref{it:con_fib_lem_2} follows from the projection formula and $f_*\mathcal{O}_X\cong \mathcal{O}_Y$.

Since, by part~\ref{it:con_fib_lem_2}, the pullback map $\phi$ is injective the corresponding map of algebraic tori $\mathbb{T}_X\to \mathbb{T}_Y$ is dominant and hence surjective by \cite[Prop.~1.1.1]{CLS11}.
The morphism $f$ is projective and satisfies $f_*\mathcal{O}_X \cong \mathcal{O}_Y$ so it is surjective. We claim that $\overline{f}\colon \overline{X} \to \overline{Y}$ is dominant. It is enough to show that $\widehat{f} \colon \widehat{X}\to \widehat{Y}$ is surjective. Let $\widehat{y} \in \widehat{Y}$. Since $f$ and $\pi_X$ are surjective, there is a point $\widehat{x}\in \widehat{X}$ with $f\circ \pi_X(\widehat{x}) = \pi_Y(\widehat{y})$. It follows that there is an element $t\in \mathbb{T}_Y$ with $t\cdot (\widehat{f}(\widehat{x})) = \widehat{y}$. Using the fact that the map of tori is surjective and that $\widehat{f}$ is equivariant we conclude that there is $t'\in \mathbb{T}_X$ with $\widehat{f}(t'\cdot \widehat{x}) = \widehat{y}$.
We have showed that $\overline{f}$ is dominant so $\varphi$ is injective. Thus, part~\ref{it:con_fib_lem_3} follows from part~\ref{it:con_fib_lem_1}.
\end{proof}

\begin{theorem}\label{thm:toric_fibration}
Let $f\colon X\to Y$ be a morphism between smooth projective complex toric varieties such that $f_*\mathcal{O}_X\cong \mathcal{O}_Y$.
\begin{enumerate}[label=(\alph*)]
\item\label{it:thm_fib_1} There exists $\varphi \colon S[Y]\to S[X]$ such that for every positive integer $r$ the assignment $[I]\mapsto [\varphi^{-1}(I)]$ defines a morphism $\pi\colon \Hilb_{S[X]}^{h_{r,X}} \to \Hilb_{S[Y]}^{h_{r,Y}}$ that maps $\Slip_{r,X}$ into $\Slip_{r,Y}$.
\item\label{it:thm_fib_2} The map $\Slip_{r,X} \to \Slip_{r,Y}$ is surjective.
\end{enumerate}
\end{theorem}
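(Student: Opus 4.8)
The plan splits along the two parts of the statement.

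For part \ref{it:thm_fib_1}, the first move is to fix a lift $\varphi\colon S[Y]\to S[X]$ of $f$ in the sense of Definition~\ref{def:lift} (such a lift exists because $f$ is a toric morphism) and then feed it into Proposition~\ref{prop:main_observation}. By Lemma~\ref{lem:connected_fibers_properties}\ref{it:con_fib_lem_3}, for every $[D]\in\Pic(Y)$ the component $\varphi_{[D]}\colon S[Y]_{[D]}\to S[X]_{\phi([D])}$ is an isomorphism, in particular surjective, where $\phi\colon\Pic(Y)\to\Pic(X)$ is the pullback. So Proposition~\ref{prop:main_observation}\ref{it:prop_p1}, applied with $S=S[Y]$, $T=S[X]$, $B_S=B(Y)$, $B_T=B(X)$, yields for every Hilbert function $H$ on $\Pic(X)$ a morphism $\pi\colon\Hilb_{S[X]}^{H}\to\Hilb_{S[Y]}^{H\circ\phi}$, $[I]\mapsto[\varphi^{-1}(I)]$. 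Taking $H=h_{r,X}$ and using Lemma~\ref{lem:connected_fibers_properties}\ref{it:con_fib_lem_1} ($\dim_\CC S[Y]_{[D]}=\dim_\CC S[X]_{\phi([D])}$) we get $h_{r,X}\circ\phi=h_{r,Y}$, hence $\pi\colon\Hilb_{S[X]}^{h_{r,X}}\to\Hilb_{S[Y]}^{h_{r,Y}}$. Finally the Cox rings are positively graded (as $X$ and $Y$ are projective) and $B(X)\subseteq\sqrt{\varphi(B(Y))\cdot S[X]}$ by Lemma~\ref{lem:characterization_of_property_2}, so Proposition~\ref{prop:main_observation}\ref{it:prop_p2} applies and $\pi$ carries $\Slip^{h_{r,X}}=\Slip_{r,X}$ into $\Slip^{h_{r,Y}}=\Slip_{r,Y}$.

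For part \ref{it:thm_fib_2}, since $\Hilb_{S[X]}^{h_{r,X}}$ and $\Hilb_{S[Y]}^{h_{r,Y}}$ are projective and $\Slip_{r,X}$ is closed, $\pi(\Slip_{r,X})$ is a closed subset of $\Slip_{r,Y}$, so it is enough to show this image is dense in $\Slip_{r,Y}$; for that it suffices to exhibit, for $Z$ ranging over a dense family of configurations of $r$ points in general position in $Y$, a preimage inside $\Slip_{r,X}$. The heart of the matter is the identity $\varphi^{-1}(I_{Z'})=I_Z$ whenever $Z=\{y_1,\dots,y_r\}\subseteq Y$ and $Z'=\{x_1,\dots,x_r\}\subseteq X$ are configurations of $r$ points in general position with $f(x_i)=y_i$ for all $i$. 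To prove it, let $\mathfrak p_x\subseteq S[X]$ and $\mathfrak p_y\subseteq S[Y]$ denote the homogeneous primes cutting out the closures in $\overline X$ and $\overline Y$ of the orbits $\pi_X^{-1}(x)$ and $\pi_Y^{-1}(y)$, so that $I_{Z'}=\bigcap_i\mathfrak p_{x_i}$ and $I_Z=\bigcap_i\mathfrak p_{y_i}$. Conditions \ref{it:deflift2} and \ref{it:deflift3} of Definition~\ref{def:lift} (that $\widehat f$ exists, is $\mathbb T_X\to\mathbb T_Y$-equivariant, and satisfies $\pi_Y\circ\widehat f=f\circ\pi_X$) give $\widehat f(\pi_X^{-1}(x_i))\subseteq\pi_Y^{-1}(y_i)$, hence $\overline f(V(\mathfrak p_{x_i}))\subseteq V(\mathfrak p_{y_i})$ and therefore $\mathfrak p_{y_i}\subseteq\varphi^{-1}(\mathfrak p_{x_i})$; intersecting over $i$ yields $I_Z\subseteq\varphi^{-1}(I_{Z'})$. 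Conversely $\varphi^{-1}(I_{Z'})$ is radical, is $B(Y)$-saturated by Proposition~\ref{prop:main_observation}\ref{it:prop_p2}, and has Hilbert function $h_{r,X}\circ\phi=h_{r,Y}$, so it is the ideal of a reduced configuration $Z''\subseteq Y$ of exactly $r$ points in general position; the containment $I_Z\subseteq I_{Z''}$ forces $Z''\subseteq Z$, and comparing cardinalities gives $Z''=Z$, i.e. $\varphi^{-1}(I_{Z'})=I_Z$ and $\pi([I_{Z'}])=[I_Z]$.

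It remains to arrange the lifts $Z'$ for enough $Z$. Since $f$ is surjective (being projective with $f_*\mathcal O_X\cong\mathcal O_Y$), the product $f^{\times r}\colon X^r\to Y^r$ is dominant. Let $U\subseteq X^r$ be the nonempty open locus of $r$-tuples of points in general position (so $(x_i)\in U$ implies $[I_{\{x_i\}}]\in\Slip_{r,X}$) and let $V\subseteq Y^r$ be the analogous locus for $Y$. Then $f^{\times r}(U)$ is dense in $Y^r$, so $W_0:=V\cap f^{\times r}(U)$ is dense and contains a dense open subset of $Y^r$; for $(y_1,\dots,y_r)\in W_0$ we may pick $(x_1,\dots,x_r)\in U$ with $f(x_i)=y_i$, and then $Z'=\{x_i\}$, $Z=\{y_i\}$, $[I_{Z'}]\in\Slip_{r,X}$, $[I_Z]\in\Slip_{r,Y}$, and $\pi([I_{Z'}])=[I_Z]$ by the previous paragraph. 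The ideals $[I_Z]$ with $(y_i)\in W_0$ form a dense subset of $\Slip_{r,Y}$ (the image of $V$ under $Z\mapsto[I_Z]$ is dense in $\Slip_{r,Y}$ by definition of $\Slip_{r,Y}$, and $W_0$ is dense in $V$), so $\pi(\Slip_{r,X})$, being closed and containing all of them, equals $\Slip_{r,Y}$. (If $\dim Y=0$ then $\Slip_{r,Y}$ is a single reduced point and surjectivity is trivial, so one may assume $\dim Y\geq 1$.)

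The step I expect to be the main obstacle is the identification $\varphi^{-1}(I_{Z'})=I_Z$: it requires converting the three abstract conditions defining a lift into the concrete inclusion of orbit closures $\overline f(V(\mathfrak p_{x_i}))\subseteq V(\mathfrak p_{y_i})$, and then combining the containment $I_Z\subseteq\varphi^{-1}(I_{Z'})$ with the bookkeeping "radical $+$ $B(Y)$-saturated $+$ Hilbert function $h_{r,Y}$ $\Rightarrow$ ideal of exactly $r$ reduced points in general position" in order to upgrade the containment to an equality. By comparison, the density argument of the last paragraph (Chevalley's theorem together with properness of $\pi$) is routine, and so is part \ref{it:thm_fib_1} once the lift $\varphi$ and Proposition~\ref{prop:main_observation} are in hand.
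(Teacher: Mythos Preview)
Your argument for part~\ref{it:thm_fib_1} matches the paper's, modulo one inaccuracy: $f$ is \emph{not} assumed to be a toric morphism, so your parenthetical reason for the existence of a lift is wrong. The paper obtains the lift from \cite[Thm.~3.2]{cox95b}, which works for arbitrary morphisms into a smooth toric variety.

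For part~\ref{it:thm_fib_2} your overall strategy (lift general configurations, use Chevalley plus properness) coincides with the paper's, but your proof of the key identity $\varphi^{-1}(I_{Z'})=I_Z$ is genuinely different and more elementary. The paper pushes forward the ideal sheaf of $Z'$ and invokes \cite[Thm.~3.5]{Man17} together with Lemma~\ref{lem:unique_saturated_ideal} to identify it with the ideal sheaf of $Z$; you instead argue directly on the level of Cox rings via the inclusion of orbit closures and then force equality by the ``radical + saturated + Hilbert function $h_{r,Y}$'' bookkeeping. Your route avoids the sheaf machinery but tacitly uses that $I_{Z'}=\bigcap_i\mathfrak p_{x_i}$, i.e.\ that each $\mathfrak p_{x_i}$ is itself $B(X)$-saturated; this is true (the orbit $\pi_X^{-1}(x_i)$ is irreducible and meets each $D(\alpha^{\widehat\sigma})$), but deserves a sentence.

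There is one actual gap in your density step. You write ``$W_0$ is dense in $V$, hence the image of $W_0$ under $Z\mapsto[I_Z]$ is dense in $\Slip_{r,Y}$''. This inference requires that the set-theoretic assignment $V\ni(y_1,\ldots,y_r)\mapsto[I_{\{y_1,\ldots,y_r\}}]$ be continuous, i.e.\ a morphism of schemes. This is not automatic: one must build a flat family over $Y^r_{gen}$ and invoke the universal property of $\Hilb_{S[Y]}^{h_{r,Y}}$. The paper does exactly this in Proposition~\ref{prop:existance_of_morphism} (Appendix~\ref{app:1}), and then the closing chain of equalities in its proof is the rigorous version of your last paragraph. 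Once you cite or reprove that proposition, your argument goes through.
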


\begin{proof}
Let $\phi\colon \Pic(Y)\to \Pic(X)$ be the pullback map. By \cite[Thm.~3.2]{cox95b} there exists a lift $\varphi \colon S[Y]\to S[X]$
of $f$ as in Definition~\ref{def:lift}. Therefore, part~\ref{it:thm_fib_1} follows from Lemmas~\ref{lem:characterization_of_property_2} and~\ref{lem:connected_fibers_properties}\ref{it:con_fib_lem_3} and Proposition~\ref{prop:main_observation}.

We proceed to the proof of part~\ref{it:thm_fib_2}. For a smooth projective complex toric variety $Z$ and a positive integer $r$ let $Z^r_{dis}$ be the subset of $Z^r$ consisting of $r$-tuples of pairwise distinct points. For each such tuple $(p_1, \ldots, p_r)$ let $I(\{p_1, \ldots, p_r\})$ be the unique $B(Z)$ saturated ideal of $S[Z]$ defining the reduced subscheme $\{p_1, \ldots, p_r\}$ of $Z$ (see \cite[Cor.~3.8]{Cox95}). Let 
\[
Z^r_{gen} = \{(p_1, \ldots, p_r) \in Z^r_{dis}  \mid \frac{S[Z]}{I(\{p_1,\ldots, p_r\})} \text{ has Hilbert function }h_{r,Z}\}.
\]
The subset $Z^r_{gen}$ is open in $Z^r$ by \cite[Thm.~1.4]{BB21}. In particular it has a natural scheme structure. We use the following fact whose proof we present in Appendix~\ref{app:1}: there is a morphism $\psi_{r,Z}\colon Z^r_{gen} \to \Hilb_{S[Z]}^{h_{r,Z}}$ that on closed points maps $(p_1, \ldots, p_r)$ to $[I(\{p_1, \ldots, p_r\})]$ (Proposition~\ref{prop:existance_of_morphism}).

Consider the product morphism $f^r\colon X^r\to Y^r$. Since $X^r_{gen}$ is open in $X^r$ (see \cite[Thm.~1.4]{BB21}) its image is constructible  by Chevalley's theorem \cite[Thm.~10.20]{GW10}. Since $f^r$ is projective and surjective and $X^r_{gen}$ is a dense open subset of $X^r$ we conclude that $f^r(X^r_{gen})$ is a constructible dense subset of $Y^r$. Thus, there is an open dense subset $U\subseteq Y^r$ contained in $f^r(X^r_{gen})$ (see \cite[Ex.~II.3.18]{Har77}). Let $V=U\cap Y^r_{gen}$ and $W=(f^{r})^{-1}(V)\cap X^r_{gen}$. Consider the following diagram
\begin{center}
\begin{tikzcd}
\Hilb_{S[X]}^{h_{r,X}} \arrow[r, "{\pi}"] & \Hilb_{S[Y]}^{h_{r,Y}} \\
W \arrow[u, "\psi_{r,X}|_W"] \arrow[r, "f^r|_W"] & V \arrow[u, "\psi_{r,Y}|_V"].
\end{tikzcd}
\end{center}
We claim that this diagram is commutative. Let $(p_1, \ldots, p_r) \in W$. We have 
\[
\psi_{r,Y}\circ f^r(p_1, \ldots, p_r) = I(\{f(p_1), \ldots, f(p_r)\}) \text{ and }\pi\circ \psi_{r,X}(p_1, \ldots, p_r) = \varphi^{-1}(I(\{p_1,\ldots, p_r\})).
\]
Let $R \subset X$ be the reduced subscheme $\{p_1, \ldots, p_r\}$ and $i\colon R\to X$ be the closed immersion. Let $R'$ be the scheme-theoretic image of $R$. Since $R$ is reduced, $R'$ is the (reduced) subscheme $\{f(p_1), \ldots, f(p_r)\}$. The ideal sheaf of $R'$ is $\operatorname{ker} (\mathcal{O}_Y \to f_*\mathcal{O}_X \to f_*i_*\mathcal{O}_R)$. Since $\mathcal{O}_Y \to f_*\mathcal{O}_X$ is an isomorphism and $f_*$ is left-exact, the ideal sheaf of $R'$ is the pushforward of the ideal sheaf of $R$.
It follows from \cite[Thm.~3.5]{Man17}, Lemma~\ref{lem:unique_saturated_ideal} and Lemma~\ref{lem:connected_fibers_properties} that $\varphi^{-1}(I(\{p_1,\ldots, p_r\}))$ defines the subscheme $\{f(p_1), \ldots, f(p_r)\}$. Since this ideal is $B(Y)$-saturated  by Proposition~\ref{prop:main_observation}~\ref{it:prop_p2}, it coincides with $I(\{f(p_1), \ldots, f(p_r)\})$. This shows that the diagram indeed commutes.

By construction, we have $f^r(W) = V$ so it is dense in $Y^r_{gen}$. Since $\pi$ is projective, it follows that
\[
\Slip_{r,Y}  = \overline{\psi_{r,Y}\circ f^r(W)} = \overline{\pi\circ \psi_{r,X}(W)} = \pi(\overline{\psi_{r,X}(W)}) = \pi(\Slip_{r,X}).
\]
\end{proof}

To be able to use Theorem~\ref{thm:toric_fibration} it is necessary to construct a lift $\varphi\colon S[Y]\to S[X]$ of $f\colon X\to Y$. Assume that $f$ is a toric morphism. The pullback $\phi\colon \Pic(Y)\to \Pic(X)$ can be computed using \cite[Prop.~6.2.7]{CLS11}. Then for a graded homomorphism of graded rings $\varphi\colon S[Y]\to S[X]$ it can be checked with the help of Lemma~\ref{lem:characterization_of_property_2} if it satisfies property~\ref{it:deflift2} of the definition of a lift. Also Lemma~\ref{lem:char_of_property_3} can be used to verify if property~\ref{it:deflift3} is fulfilled.  
We illustrate this method in Example~\ref{exa:h1}. Similarly, one may verify that if $f\colon X\times Y \to X$ is the projection then the natural inclusion $\varphi\colon S[X]\to S[X\times Y]$ is its lift.
Therefore, we obtain the following consequence of Theorem~\ref{thm:toric_fibration}.

\begin{corollary}\label{cor:projection_criterion}
    Assume that $X$ and $Y$ are smooth projective complex toric varieties and let $f\colon X\times Y \to X$ be the projection. 
    Then $S[X]$ is a subring of $S[X\times Y]$ and for every positive integer $r$ the map $[I]\mapsto [I|_{S[X]}]$ defines a morphism of schemes $\Hilb_{S[X\times Y]}^{h_{r, X\times Y}} \to \Hilb_{S[X]}^{h_{r,X}}$ that maps $\Slip_{r,X\times Y}$ onto $\Slip_{r, X}$.  
\end{corollary}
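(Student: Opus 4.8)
The plan is to derive Corollary~\ref{cor:projection_criterion} as a direct application of Theorem~\ref{thm:toric_fibration}, so the only real work is to verify that the projection $f\colon X\times Y\to X$ satisfies the hypotheses of that theorem and that the natural inclusion $\varphi\colon S[X]\to S[X\times Y]$ is a lift of $f$ in the sense of Definition~\ref{def:lift}. First I would record the easy structural fact that the Cox ring of a product is the tensor product (hence a polynomial ring containing each factor's Cox ring as a subring): $S[X\times Y]\cong S[X]\otimes_\CC S[Y]$, with $\Pic(X\times Y)\cong \Pic(X)\oplus\Pic(Y)$ and $B(X\times Y) = B(X)\cdot S[X\times Y] + B(Y)\cdot S[X\times Y]$. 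This identifies $\varphi$ with the inclusion of the first tensor factor, and makes the grading condition~\ref{it:deflift1} immediate: the pullback $\phi\colon \Pic(X)\to\Pic(X\times Y)$ is the inclusion of the first summand, and $\varphi(S[X]_{[D]})\subseteq S[X\times Y]_{([D],0)}$ by construction.

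Next I would check the condition $f_*\mathcal{O}_{X\times Y}\cong \mathcal{O}_X$, which holds because $Y$ is projective, geometrically connected and reduced (a smooth projective complex toric variety has $H^0(Y,\mathcal{O}_Y)=\CC$), so by flat base change along $X$ the fibres of the projection contribute nothing: $f_*\mathcal{O}_{X\times Y}\cong \mathcal{O}_X\otimes_\CC H^0(Y,\mathcal{O}_Y)\cong\mathcal{O}_X$. Then I would verify the two remaining conditions of Definition~\ref{def:lift}. For~\ref{it:deflift2}, by Lemma~\ref{lem:characterization_of_property_2} it suffices to show $B(X\times Y)\subseteq\sqrt{\varphi(B(X))\cdot S[X\times Y]}$; but in fact $B(X\times Y)$ is generated by $B(X)\cdot S[X\times Y]$ together with $B(Y)\cdot S[X\times Y]$, and each monomial generator coming from $B(Y)$, when we also adjoin a suitable monomial from $B(X)$, lies in the radical — alternatively, and more cleanly, $\overline{f}\colon \overline{X}\times\overline{Y}\to\overline{X}$ restricts to $\widehat{X\times Y}\to\widehat{X}$ because the open set $\widehat{X\times Y}=\overline{X\times Y}\setminus V(B(X\times Y))$ maps into $\widehat X$: a point of $\widehat{X\times Y}$ avoids $V(B(X))$ in particular, so its image avoids $V(B(X))$. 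For~\ref{it:deflift3}, the compatibility $\pi_X\circ\widehat{f}=f\circ\pi_{X\times Y}$ follows from the fact that the quotient construction of $X\times Y$ is the product of the quotient constructions of $X$ and $Y$ (this is \cite[Prop.~3.1.12]{CLS11}-style reasoning, or can be checked directly from the equivariance of $\widehat f$ under the surjection of tori $\mathbb{T}_{X\times Y}=\mathbb{T}_X\times\mathbb{T}_Y\to\mathbb{T}_X$).

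With these verifications in place, Theorem~\ref{thm:toric_fibration}\ref{it:thm_fib_1} gives that $[I]\mapsto[\varphi^{-1}(I)]=[I\cap S[X]]=[I|_{S[X]}]$ is a morphism $\Hilb_{S[X\times Y]}^{h_{r,X\times Y}}\to\Hilb_{S[X]}^{h_{r,X}}$ carrying $\Slip_{r,X\times Y}$ into $\Slip_{r,X}$, and part~\ref{it:thm_fib_2} upgrades ``into'' to ``onto''. The statement about Hilbert functions ($h_{r,X\times Y}\circ\phi = h_{r,X}$, which is what makes the target scheme the right one) is part of what Proposition~\ref{prop:main_observation}\ref{it:prop_p1} and Lemma~\ref{lem:connected_fibers_properties}\ref{it:con_fib_lem_1} already guarantee once we know $\varphi$ induces isomorphisms $S[X]_{[D]}\xrightarrow{\sim}S[X\times Y]_{\phi([D])}$, which again is transparent from the tensor-product description.

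I do not expect any serious obstacle here: the corollary is essentially a worked special case, and the paragraph immediately preceding it in the excerpt already asserts the key point (``one may verify that if $f\colon X\times Y\to X$ is the projection then the natural inclusion $\varphi\colon S[X]\to S[X\times Y]$ is its lift''). The only place requiring a little care is making the verification of Definition~\ref{def:lift}\ref{it:deflift3} genuinely rigorous rather than hand-wavy — i.e.\ pinning down that the GIT quotient presenting $X\times Y$ really is the product of those presenting $X$ and $Y$, compatibly with the projection — but this is standard toric geometry (\cite{CLS11}) and can be dispatched in a sentence or two by citing the relevant statement on products of toric varieties and their Cox rings.
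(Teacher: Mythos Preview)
Your approach is exactly the paper's: the corollary is stated as a direct consequence of Theorem~\ref{thm:toric_fibration}, and the paragraph preceding it simply asserts that the inclusion $\varphi\colon S[X]\to S[X\times Y]$ is a lift of the projection, leaving the verification to the reader. You have carried out that verification, so there is nothing further to add on the level of strategy.

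One correction: your formula $B(X\times Y) = B(X)\cdot S[X\times Y] + B(Y)\cdot S[X\times Y]$ is wrong. Maximal cones of $\Sigma_{X\times Y}$ are products $\sigma\times\tau$ with $\sigma\in\Sigma_X(\dim X)$ and $\tau\in\Sigma_Y(\dim Y)$, so the generators of $B(X\times Y)$ are the products $\alpha^{\widehat{\sigma}}\beta^{\widehat{\tau}}$, giving $B(X\times Y)=B(X)\cdot B(Y)$ (extended). Equivalently, $\widehat{X\times Y}=\widehat{X}\times\widehat{Y}$, whose complement is a \emph{union}, not an intersection, of the two irrelevant loci. With your sum formula the radical condition of Lemma~\ref{lem:characterization_of_property_2} would actually fail (e.g.\ $\beta_0\notin\sqrt{(\alpha_0,\alpha_1)}$ for $X=Y=\PP^1$). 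Fortunately your alternative geometric argument for~\ref{it:deflift2} is correct and is all you need; and with the right formula the algebraic check becomes trivial, since $B(X)\cdot B(Y)\subseteq \varphi(B(X))\cdot S[X\times Y]$ already without taking radicals.
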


\subsection{Projective embedding criterion}\label{sub:embedding}
\begin{theorem}\label{thm:embd_criterion}
Let $X$ be a smooth projective complex toric variety. Fix $[L]\in \Eff(X)$ and let $S^L\subseteq S[X]$ be the subring $\bigoplus_{d=0}^\infty S[X]_{[dL]}$. Let $r$ be a positive integer. Choose a basis $(g_0, \ldots, g_k)$ of $S^L_1$ and let $\varphi\colon \CC[t_0,\ldots, t_k] \to S[X]$ be given by $t_i\mapsto g_i$.
If the following conditions hold:
\begin{enumerate}[label=(\roman*)]
\item\label{it:thm_em_1} the ring $S^L = \bigoplus_{d=0}^\infty S[X]_{[dL]}$ is generated as a $\CC$-algebra by $S^L_1$;
\item\label{it:thm_em_3} if $h^0(X, \mathcal{O}_X(dL)) < r$, then $h^0(X, \mathcal{O}_X(dL)) = h^0(\PP^k, \mathcal{O}_{\PP^k}(d))$;
\item\label{it:thm_em_2} $B(X)\subseteq \sqrt{(S[X]_L)}$;
\end{enumerate}
then the map $[I]\mapsto [\varphi^{-1}(I)]$ defines a morphism $\pi\colon \Hilb_{S[X]}^{h_{r,X}} \to \Hilb_{S[\PP^k]}^{h_{r,\PP^k}}$ that maps $\Slip_{r,X}$ into $\Slip_{r,\PP^k}$.
\end{theorem}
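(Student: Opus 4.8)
The plan is to obtain the statement as a direct application of Proposition~\ref{prop:main_observation}, with the bookkeeping \emph{reversed} from what the notation might suggest: the target ring $S[X]$ plays the role of ``$T$'' and the source ring $\CC[t_0,\dots,t_k]=S[\PP^k]$ plays the role of ``$S$''. Concretely, I would take $A_T=\Pic(X)$, $B_T=B(X)$, $A_S=\ZZ$, $B_S=(t_0,\dots,t_k)=B(\PP^k)$, the group homomorphism $\phi\colon\ZZ\to\Pic(X)$, $d\mapsto d[L]$, and $\varphi$ as in the statement. Since each $g_i$ lies in $S[X]_{[L]}=S^L_1$, a degree-$d$ monomial in the $t_i$ is carried into $S[X]_{[dL]}=S[X]_{\phi(d)}$, so $\varphi$ is a graded homomorphism with respect to $\phi$. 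With the Hilbert function $H=h_{r,X}$, Proposition~\ref{prop:main_observation} then produces $\pi$; the work consists in translating hypotheses~\ref{it:thm_em_1}--\ref{it:thm_em_2} into those of the proposition and in checking that $h_{r,X}\circ\phi=h_{r,\PP^k}$.

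First I would verify that $\varphi_d\colon\CC[t_0,\dots,t_k]_d\to S[X]_{[dL]}$ is surjective for every $d\in\ZZ$. For $d\ge 0$ the image of $\varphi_d$ is the degree-$d$ graded piece of the $\CC$-subalgebra of $S[X]$ generated by $g_0,\dots,g_k$, which by~\ref{it:thm_em_1} is all of $S^L_d=S[X]_{[dL]}$. For $d<0$ both groups vanish---the source because $\CC[t_0,\dots,t_k]$ lives in nonnegative degrees, the target because the class $d[L]$ is not effective (we may assume $[L]\ne 0$, the case $[L]=0$ being degenerate; then $d[L]$ effective would, via a nonzero section of $\mathcal{O}_X(|d|L)$, trivialize $\mathcal{O}_X(dL)$ and force $[L]=0$, as $\Pic(X)$ is torsion-free). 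Hence Proposition~\ref{prop:main_observation}\ref{it:prop_p1} yields a morphism $\pi\colon\Hilb_{S[X]}^{h_{r,X}}\to\Hilb_{S[\PP^k]}^{h_{r,X}\circ\phi}$, $[I]\mapsto[\varphi^{-1}(I)]$.

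Next I would identify the target by showing $h_{r,X}\circ\phi=h_{r,\PP^k}$. For $d\ge 0$, surjectivity of $\varphi_d$ gives $h^0(X,\mathcal{O}_X(dL))=\dim_\CC S[X]_{[dL]}\le\dim_\CC\CC[t_0,\dots,t_k]_d=h^0(\PP^k,\mathcal{O}_{\PP^k}(d))$. If $h^0(X,\mathcal{O}_X(dL))\ge r$, then the larger quantity $h^0(\PP^k,\mathcal{O}_{\PP^k}(d))$ is also $\ge r$, so $h_{r,X}([dL])=r=h_{r,\PP^k}(d)$; if $h^0(X,\mathcal{O}_X(dL))<r$, then~\ref{it:thm_em_3} gives $h^0(X,\mathcal{O}_X(dL))=h^0(\PP^k,\mathcal{O}_{\PP^k}(d))$, and this common value, being $<r$, equals both $h_{r,X}([dL])$ and $h_{r,\PP^k}(d)$. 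For $d<0$ both values are $0$. Thus the target of $\pi$ is $\Hilb_{S[\PP^k]}^{h_{r,\PP^k}}$.

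Finally, for the statement about $\Slip$ I would apply Proposition~\ref{prop:main_observation}\ref{it:prop_p2}. Both $\CC[t_0,\dots,t_k]$ and $S[X]$ are positively graded---this is precisely the positivity already used in Subsection~\ref{subsec:notation} to guarantee that the corresponding multigraded Hilbert schemes are projective. Moreover $\varphi(B_S)\cdot S[X]$ is the ideal generated by $g_0,\dots,g_k$, that is, the ideal $(S[X]_{[L]})$, so hypothesis~\ref{it:thm_em_2} reads exactly as $B(X)=B_T\subseteq\sqrt{\varphi(B_S)\cdot S[X]}$. Hence $\pi$ carries $B(X)$-saturated ideals to $B(\PP^k)$-saturated ones and, being a preimage map, radical ideals to radical ideals, so $\pi(\Slip_{r,X})\subseteq\Slip_{r,\PP^k}$. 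The only genuinely non-formal point is the Hilbert-function comparison, where hypotheses~\ref{it:thm_em_1} and~\ref{it:thm_em_3} (together with surjectivity of the $\varphi_d$) must conspire to make $h_{r,X}\circ\phi$ agree with $h_{r,\PP^k}$ on the nose; the rest is a matter of fitting the data into Proposition~\ref{prop:main_observation}.
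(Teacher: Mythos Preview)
Your proof is correct and follows essentially the same route as the paper: apply Proposition~\ref{prop:main_observation} with $S=\CC[t_0,\dots,t_k]$, $T=S[X]$, $\phi(d)=d[L]$, use assumption~\ref{it:thm_em_1} for surjectivity of each $\varphi_d$, assumption~\ref{it:thm_em_3} for the Hilbert-function identity $h_{r,X}\circ\phi=h_{r,\PP^k}$, and assumption~\ref{it:thm_em_2} for the radical containment needed in part~\ref{it:prop_p2}. You have simply spelled out more of the details (the $d<0$ case and the case split in the Hilbert-function comparison) than the paper does.
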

\begin{proof}
Let $\phi\colon \mathbb{Z}\to \Pic(X)$ be the map given by $d\mapsto [dL]$. By assumption~\ref{it:thm_em_1} and Proposition~\ref{prop:main_observation}\ref{it:prop_p1} the morphism $\pi$ is well-defined if 
$h_{r,X}\circ \phi = h_{r, \PP^k}$. This equality of Hilbert functions follows from assumption~\ref{it:thm_em_3}.
The homomorphism $\varphi$ maps the irrelevant ideal of $\PP^k$ onto $S^L_{\geq 1}$. Therefore, by assumption~\ref{it:thm_em_2} and Proposition~\ref{prop:main_observation}\ref{it:prop_p2}, $\pi$ maps $\Slip_{r,X}$ into $\Slip_{r,\PP^k}$.
\end{proof}

\begin{example}\label{exa:p1_p1_is_reducible}
We show that $\Hilb_{S[\PP^1\times \PP^1]}^{h_{r, \PP^1\times \PP^1}}$ is reducible for every $r\geq 4.$ 

Let $S=\CC[\alpha_0, \alpha_1, \beta_0, \beta_1]$ be the Cox ring of $\PP^1\times \PP^1$. Here $\deg(\alpha_0) = \deg(\alpha_1) = (1,0)$ and $\deg(\beta_0) = \deg(\beta_1) = (0,1)$. Let $J = (\beta_0, \alpha_0^r)$.
We start with constructing an ideal $[I]\in \Hilb^{h_{r, \PP^1\times \PP^1}}_{S}$ with $I^{\sat} = J$.

Consider the lex monomial order with $\beta_0 > \beta_1 > \alpha_0 > \alpha_1$. For every $(a,b) \in \NN^2$ with $(a+1)(b+1) > r$ let $\mathcal{M}_{(a,b)}$ be the set of $(a+1)(b+1) - r$ smallest monomials in $J_{(a,b)}$.
We define
\[
I_{(a,b)} = \begin{cases}
\langle \mathcal{M}_{(a,b)} \rangle & \text{ if } (a+1)(b+1) > r\\
0  & \text{ otherwise}.
\end{cases}
\]
By construction $\dim_\CC (S/I)_{(a,b)} = h_{r, \PP^1\times \PP^1}(a,b)$, $I\subseteq J$  and  $I_{(a,b)} = J_{(a,b)}$ for every $a\geq r-1$. We claim that $I$ is an ideal. Take a monomial $M\in I_{(a,b)}$ and a monomial $N\in S_{(c,d)}$. Since $M\in I_{(a,b)}$ we have $M\in \mathcal{M}_{(a,b)}$ and as a consequence there are at least $r$ monomials in $J_{(a,b)}$ larger  than $M$. 
If we multiply any of them by $N$ we obtain a monomial in $J_{(a+c, b+d)}$ larger than $MN$. Thus $MN\in \mathcal{M}_{(a+c, b+d)} \subseteq I_{(a+c, b+d)}$. This finishes the proof that $[I]$ is a point of $\Hilb^{h_{r, \PP^1\times \PP^1}}_{S}$.

We verify that $[I]\notin \Slip_{r, \PP^1\times \PP^1}$ using Theorem~\ref{thm:embd_criterion}.
Choose $L=(1,r)$. Assumption~\ref{it:thm_em_1} of Theorem~\ref{thm:embd_criterion} is satisfied. Furthermore, $h^0(\PP^1\times \PP^1, \mathcal{O}(dL)) < r$ if and only if $d=0$, so the second assumption also holds. Finally, $(S[\PP^1\times \PP^1]_{L})$ contains $S[\PP^1\times \PP^1]_{(r,r)}$ so $B(\PP^1\times \PP^1) =  (S[\PP^1\times \PP^1]_{(1,1)}) \subseteq \sqrt{(S[\PP^1\times \PP^1]_{L})}$.

Let $(g_1, \ldots, g_{2r+2})$ be all monomials in $S[\PP^1\times \PP^1]_{L}$ written from the largest to the smallest in the same monomial order as before and consider the homomorphism $\CC[t_1, \ldots, t_{2r+2}] \to S[\PP^1\times \PP^1]$ given by $t_i\mapsto g_i$. We show that $[\varphi^{-1}(I)] \notin \Slip_{r, \PP^{2r+1}}$.

Let $K = \varphi^{-1}(I)$. We claim that $K^{\sat} = (t_1, \ldots, t_{2r}, t^r_{2r+1})$. Let $\mathfrak{m} = (t_1, \ldots, t_{2r+2})$.
For $i\in \{1,\ldots, 2r\}$ we have 
$
\varphi(t_i\cdot \mathfrak{m}^{r-1}) \subseteq (\beta_0)_{(r,r^2)} \subseteq (\beta_0, \alpha_0^r)_{(r,r^2)} = I_{(r,r^2)}.
$
It follows that $t_i\mathfrak{m}^{r-1}\subseteq K$. Furthermore, $\varphi(t^r_{2r+1}) \subseteq (\alpha_0^r)_{(r,r^2)} \subseteq I_{(r,r^2)}$ so  $(t_1, \ldots, t_{2r}, t^r_{2r+1})\subseteq K^{\sat}$. Both these ideals are saturated and have Hilbert polynomial $r$ so they coincide.

We have $t_1^{r-2} \notin K$ since $\varphi(t_1^{r-2}) = \beta_0^{r(r-2)}\alpha_0^{r-2}$ 
which by construction of $I$ is the unique monomial in $(\beta_0, \alpha_0^r)$ of degree $(r-2, r(r-2))$ that is not in $I$.
It follows from \cite[Thm.~2.7]{Man20} that $[K]\notin \Slip_{r, \PP^{2r+1}}$ and as a result $[I]\notin \Slip_{r, \PP^1\times \PP^1}$ by Theorem~\ref{thm:embd_criterion}.
\end{example}

\section{Projection from a product}\label{sec:projections}

For a smooth complex projective toric variety $Z$ with Cox ring $S[Z]$ and a divisor class $[D]\in \Eff(Z)$ we denote by $\mathcal{M}([D])$ be
 the set of all monomials in $S[Z]$ of degree $[D]$. For a positive integer $r$ we denote by $\mathcal{S}([D],r)$ the set of $r$ smallest
  monomials in $\mathcal{M}([D])$. We define $\mathcal{L}([D],r)$ to be the vector subspace of $S[Z]_{[D]}$ spanned by monomials from 
  $\mathcal{M}([D])\setminus \mathcal{S}([D],r)$. 

Assume that $X$ and $Y$ are complex smooth projective toric varieties with Cox rings $S[X]$ and $S[Y]$. Let $I_X\subseteq S[X]$ be a homogeneous ideal such that $S[X]/I_X$ has Hilbert function $h_{r, X}$. Fix any monomial orders on $S[X]$ and $S[Y]$. We consider $S[X\times Y]$ with the following product monomial order
\[
\alpha^{\mathbf{u}}\beta^{\mathbf{v}} > \alpha^{\mathbf{u}'}\beta^{\mathbf{v}'} \text{ if and only if } \beta^{\mathbf{v}} >  \beta^{\mathbf{v}'} \text{ or } \beta^{\mathbf{v}} = \beta^{\mathbf{v}'} \text{ and } \alpha^{\mathbf{u}} > \alpha^{\mathbf{u}'}
\]
where $\alpha^\mathbf{u}$ and $\alpha^{\mathbf{u}'}$ are monomials in $S[X]$ and $\beta^{\mathbf{v}}$ and $\beta^{\mathbf{v}'}$ are monomials in $S[Y]$.

We divide $\Eff(X\times Y)$ into subsets $\mathcal{A}$, $\mathcal{B}$ and $\mathcal{C}$ in the following way
\[
\mathcal{A} = \{([D],[E])\in \Eff(X\times Y) \mid  h_{r,X}([D]) = r\}
\]
\[
\mathcal{B} = \{([D],[E])\in \Eff(X\times Y) \mid  h_{r,X}([D]) < r \text{ and } h_{r,X\times Y}([D],[E]) =r \}
\]
and 
\[
\mathcal{C} = \{([D],[E])\in \Eff(X\times Y) \mid  h_{r,X\times Y}([D],[E])  < r \}.
\] 

We define the graded vector subspace $J = \bigoplus_{[D]\in \Eff(X), [E]\in \Eff(Y)} J_{([D],[E])}$ of $S[X\times Y]$ in the following way
\[
J_{([D],[E])} = \begin{cases}
\mathcal{L}([E],1)\cdot S[X]_{[D]} + S[Y]_{[E]}\cdot (I_X)_{[D]} & \text{ if } ([D], [E])\in \mathcal{A}\\
\mathcal{L}(([D],[E]),r) & \text{ if } ([D], [E])\in \mathcal{B}\\
0 & \text{ if } ([D], [E])\in \mathcal{C}.
\end{cases}
\]

We claim that 
\begin{enumerate}[label = (\alph*)]
\item $J$ is an ideal;
\item $J_{([D],\mathbf{0})} = (I_X)_{[D]}$ for every $[D]\in \Eff(X)$;
\item $H_{S[X\times Y]/J} = h_{r,X\times Y}$.
\end{enumerate}

\begin{lemma}\label{lem:is_and_ideal}
The graded vector space $J$ is an ideal of $S[X\times Y]$.
\end{lemma}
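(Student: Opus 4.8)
The plan is to verify that $J$ is closed under multiplication by homogeneous elements of $S[X\times Y]$. Since $J$ is defined as a graded vector space, it suffices to show that for every monomial $M \in S[X\times Y]_{([D'],[E'])}$ and every element $g \in J_{([D],[E])}$, the product $Mg$ lies in $J_{([D]+[D'],[E]+[E'])}$. Because $S[X\times Y] = S[X]\otimes_\CC S[Y]$, we may further assume $M = \alpha^{\mathbf{u}}\beta^{\mathbf{v}}$ with $\alpha^{\mathbf{u}}\in S[X]_{[D']}$ and $\beta^{\mathbf{v}}\in S[Y]_{[E']}$, and handle multiplication by $\alpha^{\mathbf{u}}$ and by $\beta^{\mathbf{v}}$ separately. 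The argument splits according to which of the three regions $\mathcal{A}$, $\mathcal{B}$, $\mathcal{C}$ contains $([D],[E])$ and which contains the target degree.

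First I would record the basic monotonicity facts: $h_{r,X}$ is nondecreasing along $\Eff(X)$ (adding an effective class can only enlarge the space of monomials, hence can only increase $\min\{\dim S[X]_{[D]}, r\}$), and similarly $h_{r,X\times Y}$ is nondecreasing on $\Eff(X\times Y)$. Consequently the regions behave predictably: if $([D],[E])\in \mathcal{A}$ then $([D]+[D'],[E]+[E'])\in \mathcal{A}$ as well (since $h_{r,X}([D]) = r$ forces $h_{r,X}([D]+[D']) = r$); if $([D],[E])\in \mathcal{B}$ then the target is in $\mathcal{A}\cup\mathcal{B}$; and if $([D],[E])\in\mathcal{C}$ the target can be anywhere. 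The case $([D],[E])\in\mathcal{C}$ is trivial since $J_{([D],[E])} = 0$. When the target degree lies in $\mathcal{C}$, we must have started in $\mathcal{C}$ by monotonicity, so that case is vacuous too.

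For the region $\mathcal{A}$: here $J_{([D],[E])} = \mathcal{L}([E],1)\cdot S[X]_{[D]} + S[Y]_{[E]}\cdot (I_X)_{[D]}$. Multiplying by a monomial $\alpha^{\mathbf{u}}\in S[X]_{[D']}$ keeps us in $\mathcal{A}$ and clearly sends $\mathcal{L}([E],1)\cdot S[X]_{[D]}$ into $\mathcal{L}([E],1)\cdot S[X]_{[D]+[D']}$ and $S[Y]_{[E]}\cdot (I_X)_{[D]}$ into $S[Y]_{[E]}\cdot (I_X)_{[D]+[D']}$, using that $I_X$ is an ideal of $S[X]$. Multiplying by $\beta^{\mathbf{v}}\in S[Y]_{[E']}$: the term $S[Y]_{[E]}\cdot (I_X)_{[D]}$ maps into $S[Y]_{[E]+[E']}\cdot(I_X)_{[D]}$, fine; for the term $\mathcal{L}([E],1)\cdot S[X]_{[D]}$ the point is that if $\beta^{\mathbf{w}}$ is a monomial of $S[Y]_{[E]}$ that is \emph{not} the smallest, then $\beta^{\mathbf{v}}\beta^{\mathbf{w}}$ is not the smallest monomial of $S[Y]_{[E]+[E']}$ (multiplying the $r=1$ smallest, i.e.\ the unique smallest, monomial by $\beta^{\mathbf{v}}$ produces something strictly smaller), so $\beta^{\mathbf{v}}\cdot\mathcal{L}([E],1)\subseteq \mathcal{L}([E]+[E'],1)$, and this lands in $J$ of the target degree. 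This monomial-order bookkeeping is exactly the argument already used in Example~\ref{exa:p1_p1_is_reducible} and in the $\mathcal{B}$-region analysis of that example, now applied in the product order.

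The region $\mathcal{B}$, and transitions $\mathcal{B}\to\mathcal{A}$, are where the main obstacle lies. Inside $\mathcal{B}$, $J_{([D],[E])} = \mathcal{L}(([D],[E]),r)$ is spanned by all but the $r$ smallest monomials of degree $([D],[E])$; the argument that multiplication by any monomial keeps us inside such a space is precisely the "at least $r$ monomials larger than $M$" argument from Example~\ref{exa:p1_p1_is_reducible}, since multiplying $r$ distinct monomials by a fixed monomial $N$ yields $r$ distinct monomials, and order is preserved under multiplication. The genuinely delicate point is the transition from $\mathcal{B}$ into $\mathcal{A}$: we must show that if $([D],[E])\in\mathcal{B}$ and $([D]+[D'],[E]+[E'])\in\mathcal{A}$, then $M\cdot \mathcal{L}(([D],[E]),r)$ lands in $\mathcal{L}([E]+[E'],1)\cdot S[X]_{[D]+[D']} + S[Y]_{[E]+[E']}\cdot(I_X)_{[D]+[D']}$. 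Here one uses the product order crucially: a monomial $\alpha^{\mathbf{u}}\beta^{\mathbf{w}}$ of degree $([D],[E])$ that is among the \emph{large} ones (not in the $r$ smallest) either has $\beta^{\mathbf{w}}$ not the minimal $S[Y]$-monomial of its degree --- so after multiplication by $M$ the $S[Y]$-part is non-minimal and we land in the $\mathcal{L}([E]+[E'],1)\cdot S[X]$ summand --- or else $\beta^{\mathbf{w}}$ is forced (for $([D],[E])\in \mathcal B$ the monomials split by their $\beta$-part and the $\alpha^{\mathbf{u}}$ must lie outside the $h_{r,X}([D])$ smallest $S[X]$-monomials of degree $[D]$), and one needs to know that the span of the non-smallest $S[X]_{[D]}$-monomials, pushed forward, lies in $(I_X)_{[D]+[D']}$; this is where $\dim(S[X]/I_X)_{[D]} = h_{r,X}([D])$ combined with the fact that $I_X$ contains "enough" of degree $[D]$ must be used, and it may require choosing the monomial orders compatibly or invoking that $(I_X)_{[D]+[D']}$ already contains $S[Y]$-free multiples once the target is in $\mathcal{A}$. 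I would isolate this as the key sublemma and prove it by a careful counting of monomials in the product order, matching the $(a+1)(b+1)-r$ count against the definitions of $\mathcal{A}$ and $\mathcal{B}$, exactly as the structure of $J$ in region $\mathcal A$ was designed to make work.
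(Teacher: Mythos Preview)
Your overall strategy matches the paper's: case-split by which of $\mathcal{A},\mathcal{B},\mathcal{C}$ contains the source degree and which contains the target, and handle each using the product monomial order. Cases $\mathcal{A}\to\mathcal{A}$, $\mathcal{B}\to\mathcal{B}$, and $\mathcal{C}\to\ast$ are fine.

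The gap is in your $\mathcal{B}\to\mathcal{A}$ transition. You correctly split a monomial in $\mathcal{L}(([D],[E]),r)$ according to whether its $\beta$-part is the minimal monomial of $S[Y]_{[E]}$ or not, and you handle the non-minimal case correctly. But your ``or else'' branch is confused: you propose needing to show that certain $S[X]$-monomials, pushed forward, land in $(I_X)_{[D]+[D']}$, and you speculate about compatibility of monomial orders with $I_X$. This would not work, because $I_X$ is an \emph{arbitrary} ideal with quotient Hilbert function $h_{r,X}$ and bears no relation to the monomial order on $S[X]$.

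The point you are missing is that the ``or else'' branch is \emph{empty}. Since $([D],[E])\in\mathcal{B}$ we have $\dim_\CC S[X]_{[D]} < r$. In the product order, every monomial of degree $([D],[E])$ whose $\beta$-part is minimal is smaller than every monomial whose $\beta$-part is not. There are exactly $\dim_\CC S[X]_{[D]} < r$ monomials with minimal $\beta$-part, so all of them lie in $\mathcal{S}(([D],[E]),r)$ and none lies in $\mathcal{L}(([D],[E]),r)$. Hence $\mathcal{L}(([D],[E]),r)\subseteq \mathcal{L}([E],1)\cdot S[X]_{[D]}$ outright, and after multiplication you land in $\mathcal{L}([E]+[E'],1)\cdot S[X]_{[D]+[D']}\subseteq J_{([D]+[D'],[E]+[E'])}$. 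The $(I_X)$-summand plays no role in this case.
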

\begin{proof}

We show that $J_{([D_1], [E_1])}\cdot S[X\times Y]_{([D_2],[E_2])} \subseteq J_{([D_1+D_2], [E_1+E_2])}$ for every $([D_1], [E_1]), ([D_2], [E_2]) \in \Eff(X\times Y)$. We consider four cases:

\noindent\underline{Case 1: $([D_1], [E_1])\in \mathcal{A}$}

We have $([D_1+D_2], [E_1+E_2])\in \mathcal{A}$ and $J_{([D_1], [E_1])}\cdot S[X\times Y]_{([D_2], [E_2])} =$
\[
 = \mathcal{L}([E_1], 1)\cdot S[X]_{[D_1]}\cdot S[X\times Y]_{([D_2,] [E_2])} + S[Y]_{[E_1]}\cdot S[X\times Y]_{([D_2], [E_2])}\cdot (I_{X})_{[D_1]}
\]
\[
\subseteq \mathcal{L}([E_1],1)\cdot S[Y]_{[E_2]} \cdot S[X]_{[D_1+D_2]} + S[Y]_{[E_1]+[E_2]}\cdot S[X]_{[D_2]}\cdot (I_X)_{[D_1]}
\]
\[
\subseteq \mathcal{L}([E_1+E_2], 1)\cdot S[X]_{[D_1+D_2]} + S[Y]_{[E_1]+[E_2]}\cdot (I_{X})_{[D_1]+[D_2]} = J_{([D_1+D_2], [E_1+ E_2])}.
\]

\noindent \underline{Case 2:  $([D_1], [E_1])\in \mathcal{B}$ and  $([D_1+D_2], [E_1+E_2]) \in \mathcal{A}$}

Let $M$ be the smallest monomial in $S[Y]_{[E_1]}$. By the definition of $\mathcal{B}$ we have $h^0(X, \mathcal{O}_X(D_1)) <r $.  Hence, it follows from the definition of the monomial order in $S[X\times Y]$ that no monomial in $\mathcal{L}(([D_1], [E_1]), r)$ is divisible by $M$. Consequently, $\mathcal{L}(([D_1], [E_1]), r) \subseteq \mathcal{L}([E_1], 1)\cdot S[X]_{[D_1]}$.
Therefore
\[
J_{([D_1], [E_1])} \cdot S[X\times Y]_{([D_2], [E_2])} \subseteq \mathcal{L}([E_1], 1)\cdot S[X]_{[D_1]} \cdot S[X\times Y]_{([D_2], [E_2])}
\]
which is contained in $\mathcal{L}([E_1+E_2], 1)\cdot S[X]_{[D_1]+[D_2]} \subseteq J_{([D_1+D_2], [E_1+E_2])}.$

\noindent \underline{Case 3: $([D_1], [E_1])\in \mathcal{B}$ and  $([D_1+D_2], [E_1+E_2]) \in \mathcal{B}$}
\[
J_{([D_1], [E_1])}\cdot S[X\times Y]_{([D_2], [E_2])} = \mathcal{L}(([D_1], [E_1]),r)\cdot S[X\times Y]_{([D_2], [E_2])}. 
\]
Let $M\in \mathcal{L}(([D_1], [E_1]),r)$ and $M' \in \mathcal{M}(([D_2], [E_2]))$. For every $M''\in \mathcal{S}(([D_1], [E_1]),r)$ we have $M''M'  < MM'$. It follows that $MM' \in \mathcal{L}(([D_1+D_2], [E_1+E_2]),r)$ and therefore 
\[
J_{([D_1], [E_1])}\cdot S[X\times Y]_{([D_2], [E_2])} \subseteq  J_{([D_1+D_2], [E_1+E_2])}.
\]

\noindent\underline{Case 4: $([D_1], [E_1])\in \mathcal{C}$}
\[
J_{([D_1],[E_1])}\cdot S[X\times Y]_{([D_2], [E_2])} = 0\cdot S[X\times Y]_{([D_2], [E_2])} \subseteq J_{([D_1+D_2], [E_1+E_2])}. 
\]
\end{proof}

\begin{lemma}\label{lem:correct_HF_and_restricion}
The Hilbert function of $S[X\times Y]/J$ is $h_{r, X\times Y}$ and for every $[D]\in \Eff(X)$ we have $J_{([D],\mathbf{0})} = (I_X)_{[D]}$.
\end{lemma}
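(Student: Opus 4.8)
The plan is to verify the claimed Hilbert function degree-by-degree over $\Eff(X\times Y)$, using the three-way partition $\mathcal{A}\sqcup\mathcal{B}\sqcup\mathcal{C}$, and then separately pin down the degrees of the form $([D],\mathbf{0})$. For a degree $([D],[E])$ in $\mathcal{C}$ we have $J_{([D],[E])}=0$ and $h_{r,X\times Y}([D],[E]) = \dim_\CC S[X\times Y]_{([D],[E])}$ by definition of $\mathcal{C}$, so there is nothing to check. For $([D],[E])\in\mathcal{B}$ the subspace $J_{([D],[E])} = \mathcal{L}(([D],[E]),r)$ is by construction spanned by all but the $r$ smallest monomials of that degree, so the quotient has dimension exactly $r = h_{r,X\times Y}([D],[E])$; here I would only need to note that $\mathcal{M}(([D],[E]))$ has at least $r$ elements, which holds because $([D],[E])\in\mathcal{B}\subseteq\Eff(X\times Y)$ and $h_{r,X\times Y}([D],[E])=r$ forces $\dim S[X\times Y]_{([D],[E])}\ge r$.

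The substantive case is $([D],[E])\in\mathcal{A}$, i.e. $h_{r,X}([D])=r$, where
\[
J_{([D],[E])} = \mathcal{L}([E],1)\cdot S[X]_{[D]} + S[Y]_{[E]}\cdot (I_X)_{[D]}.
\]
Here I want to show $\dim_\CC \bigl(S[X\times Y]/J\bigr)_{([D],[E])} = r$. The idea is that $S[X\times Y]_{([D],[E])} \cong S[Y]_{[E]}\otimes_\CC S[X]_{[D]}$, and modulo $\mathcal{L}([E],1)\cdot S[X]_{[D]}$ we may replace $S[Y]_{[E]}$ by the one-dimensional space spanned by the single smallest monomial $M_{[E]}$ of degree $[E]$ (note $[E]\in\Eff(Y)$ so $S[Y]_{[E]}\neq 0$). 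Concretely, $\mathcal{L}([E],1)$ is the span of all monomials of degree $[E]$ except $M_{[E]}$, so
\[
\bigl(S[X\times Y]/J\bigr)_{([D],[E])} \;\cong\; \frac{M_{[E]}\cdot S[X]_{[D]}}{M_{[E]}\cdot (I_X)_{[D]}} \;\cong\; \bigl(S[X]/I_X\bigr)_{[D]},
\]
which has dimension $h_{r,X}([D]) = r = h_{r,X\times Y}([D],[E])$, the last equality because $h_{r,X}([D])=r$ already caps the value. The one thing to be careful about in this isomorphism is that the sum defining $J_{([D],[E])}$ really does decompose as claimed: writing an element of $S[Y]_{[E]}\otimes S[X]_{[D]}$ in the monomial basis of $S[Y]_{[E]}$, the $\mathcal{L}([E],1)$-component is killed and the $M_{[E]}$-component lands in $M_{[E]}\otimes (I_X)_{[D]}$ precisely when the original element is in $J$; this is a direct-sum bookkeeping check rather than anything deep.

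For the final claim, $J_{([D],\mathbf{0})}=(I_X)_{[D]}$ for $[D]\in\Eff(X)$: since $S[Y]_{\mathbf{0}}=\CC$ and $\mathbf{0}\in\Eff(Y)$, the only monomial of degree $\mathbf{0}$ is $1$, hence $\mathcal{L}(\mathbf{0},1)=0$ and $\mathcal{S}(\mathbf{0},1)=\{1\}$. If $h_{r,X}([D])=r$ then $([D],\mathbf{0})\in\mathcal{A}$ and $J_{([D],\mathbf{0})} = 0\cdot S[X]_{[D]} + \CC\cdot (I_X)_{[D]} = (I_X)_{[D]}$. If $h_{r,X}([D])<r$ then $h_{r,X\times Y}([D],\mathbf{0}) = h_{r,X}([D]) < r$, so $([D],\mathbf{0})\in\mathcal{C}$ and $J_{([D],\mathbf{0})}=0$; but in this range $(I_X)_{[D]}=0$ as well, since $S[X]/I_X$ has Hilbert function $h_{r,X}$ and $h_{r,X}([D]) = \dim S[X]_{[D]}$ whenever $h_{r,X}([D])<r$ (by definition of $h_{r,X}$ as a minimum). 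So the two sides agree in every case. I expect the $\mathcal{A}$-case decomposition to be the only place requiring genuine care; everything else is a matter of unwinding definitions, and I would present it compactly, perhaps treating $\mathcal{A}$ first and then dispatching $\mathcal{B}$, $\mathcal{C}$ and the $([D],\mathbf{0})$ statement in a short paragraph each.
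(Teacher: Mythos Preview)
Your proof is correct and follows essentially the same approach as the paper: both handle the partition $\mathcal{A}\sqcup\mathcal{B}\sqcup\mathcal{C}$ case-by-case and treat the restriction $J_{([D],\mathbf{0})}$ by noting $\mathcal{L}(\mathbf{0},1)=0$. The only cosmetic difference is that in the $\mathcal{A}$-case you compute the quotient via the direct-sum decomposition $S[Y]_{[E]}=\mathcal{L}([E],1)\oplus\CC M_{[E]}$ to get $(S[X\times Y]/J)_{([D],[E])}\cong(S[X]/I_X)_{[D]}$, whereas the paper computes $\dim J_{([D],[E])}$ by inclusion--exclusion on the two summands; these are the same computation presented in dual form.
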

\begin{proof}
If $([D], [E])$ belongs to $\mathcal{B}$ or $\mathcal{C}$ then $J_{([D], [E])}$ has the right codimension in $S[X\times Y]_{([D], [E])}$. Assume that $([D], [E]) \in \mathcal{A}$.
The dimension of $J_{([D], [E])}$ is
\[
\dim_\CC \left(\mathcal{L}([E], 1)\cdot S[X]_{[D]}\right) + \dim_\CC \left( S[Y]_{[E]}\cdot (I_X)_{[D]} \right)  - \dim_\CC \left(\mathcal{L}([E], 1)\cdot (I_X)_{[D]}\right)
\]
\[
= (h^0(\mathcal{O}_Y(E))-1)h^0(\mathcal{O}_X(D)) + h^0(\mathcal{O}_Y(E))(h^0(\mathcal{O}_X(D))-r) - (h^0(\mathcal{O}_Y(E))-1)(h^0(\mathcal{O}_X(D))-r)
\]
which gives $\dim_\CC (S[X\times Y]/J)_{([D],[E])} = r$.

If  $([D],\mathbf{0})\in \mathcal{C}$ then $J_{([D], \mathbf{0})} = 0 = (I_X)_{[D]}$. If $([D], \mathbf{0}) \in \mathcal{A}$ then $J_{([D],\mathbf{0})} = (I_X)_{[D]}$ since $\mathcal{L}(\mathbf{0}, 1) = 0$ and $S[Y]_\mathbf{0} = \CC\cdot 1$.
\end{proof}

\begin{proposition}\label{prop:projections_from_product_are_surjective}
Let $X$ and $Y$ be smooth projective complex toric varieties and let $r$ be a positive integer. The natural map
$\Hilb^{h_{r,X\times Y}}_{S[X\times Y]} \to \Hilb^{h_{r,X}}_{S[X]}$ given by $[K]\mapsto [K|_{S[X]}]$ is surjective.
\end{proposition}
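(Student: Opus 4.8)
The plan is to produce, for each closed point $[I_X]$ of $\Hilb^{h_{r,X}}_{S[X]}$, an explicit point of $\Hilb^{h_{r,X\times Y}}_{S[X\times Y]}$ lying over it under the restriction morphism of Corollary~\ref{cor:projection_criterion}. The candidate preimage is exactly the ideal $J$ attached to $I_X$ in the construction carried out just before Lemma~\ref{lem:is_and_ideal}. Since $\CC$ is algebraically closed and the two multigraded Hilbert schemes are projective (in particular of finite type) over $\CC$, surjectivity of a morphism between them may be checked on closed points, so exhibiting such a preimage for every $[I_X]$ is enough.

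First I would record that $[J]$ really is a point of $\Hilb^{h_{r,X\times Y}}_{S[X\times Y]}$: Lemma~\ref{lem:is_and_ideal} gives that $J$ is a $\Pic(X\times Y)$-homogeneous ideal of $S[X\times Y]$, and Lemma~\ref{lem:correct_HF_and_restricion} gives $H_{S[X\times Y]/J}=h_{r,X\times Y}$. Then I would compute the restriction $J\cap S[X]$. Identifying $\Pic(X\times Y)$ with $\Pic(X)\oplus\Pic(Y)$, the degree $\mathbf{0}$ piece of $S[Y]$ is $\CC\cdot 1$, so every monomial of $S[X\times Y]$ of degree $([D],\mathbf{0})$ involves only the variables of $S[X]$; hence $S[X\times Y]_{([D],\mathbf{0})}=S[X]_{[D]}$. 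Because $J$ is homogeneous, $J\cap S[X]=\bigoplus_{[D]}J_{([D],\mathbf{0})}$, and by the second assertion of Lemma~\ref{lem:correct_HF_and_restricion}, together with the fact that $(I_X)_{[D]}=0$ for $[D]\notin\Eff(X)$ (as then $S[X]_{[D]}=0$), this equals $\bigoplus_{[D]\in\Eff(X)}(I_X)_{[D]}=I_X$. Thus the restriction morphism sends $[J]$ to $[I_X]$, and since $[I_X]$ was an arbitrary closed point the morphism is surjective.

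There is no real obstacle remaining: the content is entirely in the monomial-order construction preceding Lemma~\ref{lem:is_and_ideal} and in Lemmas~\ref{lem:is_and_ideal} and~\ref{lem:correct_HF_and_restricion}, which together say that the chosen product monomial order lets one extend any ideal with Hilbert function $h_{r,X}$ to an ideal of $S[X\times Y]$ with Hilbert function $h_{r,X\times Y}$ whose restriction to $S[X]$ recovers the original ideal. The only step that deserves a word is the passage from surjectivity on closed points to genuine surjectivity of the morphism of schemes, which is immediate over $\CC$.
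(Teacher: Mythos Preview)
Your proposal is correct and follows exactly the paper's approach: pick $[I_X]$, form the ideal $J$ from the construction preceding Lemma~\ref{lem:is_and_ideal}, and invoke Lemmas~\ref{lem:is_and_ideal} and~\ref{lem:correct_HF_and_restricion} to see that $[J]\in\Hilb^{h_{r,X\times Y}}_{S[X\times Y]}$ restricts to $[I_X]$. You simply spell out a few details (why $S[X\times Y]_{([D],\mathbf{0})}=S[X]_{[D]}$, and why surjectivity on closed points suffices) that the paper leaves implicit.
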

\begin{proof}
Let $[I]\in \Hilb^{h_{r,X}}_{S[X]}$ and let $J \subseteq S[X\times Y]$ be the graded vector space constructed as in the beginning of this section.
It follows from Lemmas~\ref{lem:is_and_ideal}~and~\ref{lem:correct_HF_and_restricion} that $[J] \in \Hilb^{h_{r, X\times Y}}_{S[X\times Y]}$ and $J|_{S[X]} = I$.
\end{proof}

\section{Surjectivity of the natural map to the Hilbert scheme}\label{sec:surjectivity_in_general}
Let $S=\CC[\alpha_0,\ldots, \alpha_{n}]$ be the standard $\ZZ$-graded polynomial ring. Consider its graded dual ring $S^* = \CC_{dp}[x_0, \ldots, x_n]$. The subscript $dp$ refers to the divided power structure.
See \cite{IK06}[App.~A] for basic properties of $S^*$.
We fix a monomial order $<$ on $S$ and we consider the analogous order of monomials in $S^*$. Recall that we have the apolarity action of $S$ on $S^*$ given on monomials by $\alpha_i \lrcorner \mathbf{x}^{\mathbf{u}} = \begin{cases} 0 & \text{ if } u_i = 0 \\ \mathbf{x}^{\mathbf{u}-\mathbf{e}_i} & \text{ otherwise}\end{cases}$.

\begin{lemma}\label{lem:largest_monomial_lemma}
Let $k$ be a positive integer. Assume that $V_k\subseteq S^*_{k}$ is a proper vector subspace. If $\mathbf{x}^\mathbf{u}\in S^*_{k}$ is the largest monomial in $S^*_{k}\setminus V_{k}$, then
\begin{enumerate}[label=(\alph*)]
\item\label{it:lemma_pt1} there exists at most one $i\in \{0,1, \ldots,n\}$ with $\alpha_i \lrcorner \mathbf{x}^\mathbf{u} \notin S_1\lrcorner V_{k}$;
\item if $\alpha_i \lrcorner \mathbf{x}^\mathbf{u} \notin S_1\lrcorner V_k$, then $\alpha_i \lrcorner \mathbf{x}^\mathbf{u}$ is the largest monomial in $S^*_{k-1}\setminus (S_1\lrcorner V_{k})$.
\end{enumerate}
\end{lemma}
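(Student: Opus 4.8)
The plan is to work with the apolarity pairing and exploit the fact that $\mathbf{x}^\mathbf{u}$ is the \emph{largest} monomial outside $V_k$. Fix the notation: write $W = S_1 \lrcorner V_k \subseteq S^*_{k-1}$, so that $W$ is spanned by $\{\alpha_j \lrcorner v : v \in V_k, 0 \le j \le n\}$. The key elementary observation is that for a monomial $\mathbf{x}^\mathbf{w} \in S^*_k$, a contraction $\alpha_i \lrcorner \mathbf{x}^\mathbf{w}$ is either $0$ (when $w_i = 0$) or the monomial $\mathbf{x}^{\mathbf{w}-\mathbf{e}_i}$; and crucially, $\mathbf{x}^{\mathbf{w}} > \mathbf{x}^{\mathbf{u}}$ implies that \emph{every} nonzero contraction of $\mathbf{x}^\mathbf{w}$ dominates the corresponding contraction of $\mathbf{x}^\mathbf{u}$ in a suitable sense --- but I must be careful, because monomial orders do not in general interact that simply with contraction. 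What is true and what I would use is the \emph{lex-type} behavior in the leading variable, or more robustly: if $\mathbf{x}^{\mathbf{w}} > \mathbf{x}^{\mathbf{u}}$ then there is an index $j$ (the first where they differ, for a lex order; for a general order one argues via the finiteness of $S^*_k$ and maximality) such that contracting by variables supported above $j$ preserves the strict inequality.

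**Proof of (a).** I would argue by contradiction. Suppose there exist two distinct indices $i \ne i'$ with $\alpha_i \lrcorner \mathbf{x}^\mathbf{u} \notin W$ and $\alpha_{i'} \lrcorner \mathbf{x}^\mathbf{u} \notin W$; in particular $u_i \ge 1$ and $u_{i'} \ge 1$. Consider the monomial $\mathbf{x}^{\mathbf{u}+\mathbf{e}_i - \mathbf{e}_{i'}}$ --- wait, that has the wrong degree; instead consider $\mathbf{x}^{\mathbf{u}+\mathbf{e}_i}$ of degree $k+1$ and its contractions, or better, work one degree up. The clean approach: since $\mathbf{x}^\mathbf{u}$ is the largest monomial of degree $k$ not in $V_k$, every monomial $\mathbf{x}^\mathbf{w} \in S^*_k$ with $\mathbf{x}^\mathbf{w} > \mathbf{x}^\mathbf{u}$ lies in $V_k$, hence all its contractions lie in $W$. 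Now if both $\alpha_i \lrcorner \mathbf{x}^\mathbf{u}$ and $\alpha_{i'} \lrcorner \mathbf{x}^\mathbf{u}$ were outside $W$, I would produce a monomial of degree $k+1$, say $\mathbf{x}^\mathbf{v}$ with $\mathbf{x}^\mathbf{v} > $ (the relevant comparison monomial) such that $\alpha_i \lrcorner \mathbf{x}^\mathbf{v}$ and $\alpha_{i'}\lrcorner \mathbf{x}^\mathbf{v}$ recover these two elements, forcing a contradiction with the one-degree-up maximality --- but since $V_{k+1}$ is not given, this does not immediately work. So instead I would run the argument purely in degree $k$: take the two monomials $M_1 = \alpha_i \lrcorner \mathbf{x}^\mathbf{u}$ and $M_2 = \alpha_{i'}\lrcorner \mathbf{x}^\mathbf{u}$ in $S^*_{k-1}$. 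Note $\alpha_{i'}\lrcorner M_1 = \alpha_i \lrcorner M_2$ (both equal $\mathbf{x}^{\mathbf{u}-\mathbf{e}_i-\mathbf{e}_{i'}}$, using $u_i,u_{i'}\ge 1$). If $i' \ne i$ and this common monomial is nonzero, consider any lift: the monomial $\mathbf{x}^{\mathbf{u}-\mathbf{e}_i+\mathbf{e}_{i'}}$... again degree issues. The genuinely correct route, which I now commit to, is: \textbf{the hard part is exactly (a)}, and it should be handled by choosing, among all indices $i$ with $u_i \ge 1$ and $\alpha_i \lrcorner \mathbf{x}^\mathbf{u} \notin W$, the one maximizing the order of $\alpha_i \lrcorner \mathbf{x}^\mathbf{u}$ --- call it $i_0$ --- and then showing that for any \emph{other} such index $i_1$, the monomial $\mathbf{x}^{\mathbf{u} - \mathbf{e}_{i_1} + \mathbf{e}_{i_0}}$ is strictly larger than $\mathbf{x}^\mathbf{u}$ (this uses that $i_0$ was chosen to give the larger contraction, together with the definition of the order), hence lies in $V_k$, hence $\alpha_{i_0} \lrcorner \mathbf{x}^{\mathbf{u}-\mathbf{e}_{i_1}+\mathbf{e}_{i_0}}$ lies in $W$; but that contraction is (up to the divided-power coefficient, which is a nonzero scalar) the monomial $\mathbf{x}^{\mathbf{u}-\mathbf{e}_{i_1}} = \alpha_{i_1}\lrcorner \mathbf{x}^\mathbf{u}$, contradicting $\alpha_{i_1}\lrcorner \mathbf{x}^\mathbf{u}\notin W$. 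One must verify that $\mathbf{x}^{\mathbf{u}-\mathbf{e}_{i_1}+\mathbf{e}_{i_0}} > \mathbf{x}^\mathbf{u}$: this is where the choice of $i_0$ as maximizer enters, via the observation that $\alpha_{i_0}\lrcorner(\cdot)$ and $\alpha_{i_1}\lrcorner(\cdot)$ applied to this new monomial return comparable monomials whose order is inherited.

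**Proof of (b).** This is the easier half. Suppose $\alpha_i \lrcorner \mathbf{x}^\mathbf{u} \notin W$; I must show it is the largest monomial of $S^*_{k-1}$ not in $W$. Let $\mathbf{x}^\mathbf{t} \in S^*_{k-1}$ be a monomial with $\mathbf{x}^\mathbf{t} > \alpha_i \lrcorner \mathbf{x}^\mathbf{u} = \mathbf{x}^{\mathbf{u}-\mathbf{e}_i}$; I want $\mathbf{x}^\mathbf{t} \in W$. Form the degree-$k$ monomial $\mathbf{x}^{\mathbf{t}+\mathbf{e}_i}$. Compatibility of the monomial order with multiplication gives $\mathbf{x}^{\mathbf{t}+\mathbf{e}_i} > \mathbf{x}^{\mathbf{u}-\mathbf{e}_i+\mathbf{e}_i} = \mathbf{x}^\mathbf{u}$. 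By maximality of $\mathbf{x}^\mathbf{u}$ in $S^*_k \setminus V_k$, we get $\mathbf{x}^{\mathbf{t}+\mathbf{e}_i} \in V_k$. Then $\alpha_i \lrcorner \mathbf{x}^{\mathbf{t}+\mathbf{e}_i}$ is, up to a nonzero divided-power scalar, $\mathbf{x}^\mathbf{t}$, and it lies in $S_1 \lrcorner V_k = W$ by definition. Hence $\mathbf{x}^\mathbf{t} \in W$, as desired. (The one subtlety is the divided-power coefficient: $\alpha_i \lrcorner \mathbf{x}^{\mathbf{v}}$ on a divided-power monomial is literally $\mathbf{x}^{\mathbf{v}-\mathbf{e}_i}$ with coefficient $1$ in the convention fixed just before the lemma, so there is in fact no scalar to worry about, and the argument is clean.) I expect part (b) to follow in a few lines once the compatibility of $<$ with multiplication is invoked, and I would present (b) first if it streamlines the write-up, since (a) may reuse the identity "$\mathbf{x}^{\mathbf{t}+\mathbf{e}_i} \in V_k \Rightarrow \mathbf{x}^\mathbf{t} \in W$" established there.
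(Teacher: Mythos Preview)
Your argument for part (b) is correct and is exactly the paper's: lift $\mathbf{x}^{\mathbf{t}}$ to $\mathbf{x}^{\mathbf{t}+\mathbf{e}_i}\in V_k$ using compatibility of $<$ with multiplication, then contract by $\alpha_i$.

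Part (a), however, has a genuine direction error. With your choice of $i_0$ as the index \emph{maximizing} $\alpha_i\lrcorner\mathbf{x}^{\mathbf{u}}=\mathbf{x}^{\mathbf{u}-\mathbf{e}_i}$, the claimed inequality $\mathbf{x}^{\mathbf{u}-\mathbf{e}_{i_1}+\mathbf{e}_{i_0}}>\mathbf{x}^{\mathbf{u}}$ is false: from $\mathbf{x}^{\mathbf{u}-\mathbf{e}_{i_0}}>\mathbf{x}^{\mathbf{u}-\mathbf{e}_{i_1}}$ one multiplies by $x_{i_0}$ and obtains $\mathbf{x}^{\mathbf{u}}>\mathbf{x}^{\mathbf{u}-\mathbf{e}_{i_1}+\mathbf{e}_{i_0}}$, the reverse. (Concretely, in lex with $x_0>x_1$ and $\mathbf{x}^{\mathbf{u}}=x_0x_1$, your $i_0=1$ and $i_1=0$, giving $\mathbf{x}^{\mathbf{u}-\mathbf{e}_{i_1}+\mathbf{e}_{i_0}}=x_1^2<x_0x_1$.) The vague justification you give for this step does not survive.

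The fix is immediate once spotted: swap the roles of $i_0$ and $i_1$ in the auxiliary monomial. Multiplying $\mathbf{x}^{\mathbf{u}-\mathbf{e}_{i_0}}>\mathbf{x}^{\mathbf{u}-\mathbf{e}_{i_1}}$ by $x_{i_1}$ gives $\mathbf{x}^{\mathbf{u}-\mathbf{e}_{i_0}+\mathbf{e}_{i_1}}>\mathbf{x}^{\mathbf{u}}$, hence this monomial lies in $V_k$, and contracting by $\alpha_{i_1}$ yields $\mathbf{x}^{\mathbf{u}-\mathbf{e}_{i_0}}=\alpha_{i_0}\lrcorner\mathbf{x}^{\mathbf{u}}\in W$, contradicting the choice of $i_0$. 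The paper runs exactly this mechanism but phrases the choice more transparently: it picks $i$ so that $x_i$ is the \emph{largest variable} dividing $\mathbf{x}^{\mathbf{u}}$ (equivalently, the index \emph{minimizing} the contraction), and then shows directly that $\alpha_j\lrcorner\mathbf{x}^{\mathbf{u}}\in W$ for every other $j$ with $u_j\ge 1$, without needing to restrict attention to indices whose contraction is outside $W$. That formulation also makes the comparison $\frac{x_i}{x_j}\mathbf{x}^{\mathbf{u}}>\mathbf{x}^{\mathbf{u}}$ immediate from $x_i>x_j$.
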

\begin{proof}
\hfill
Assume that there exists $\alpha_j \notin \Ann(\mathbf{x}^\mathbf{u})$ and let $\alpha_i$ be the largest among them.
Assume that $\alpha_j$ is another form that does not annihilate $\mathbf{x}^\mathbf{u}$. We have $x_j< x_i$ so the monomial $\frac{x_i}{x_j}\cdot \mathbf{x}^{\mathbf{u}}$ is greater than $\mathbf{x}^{\mathbf{u}}$ and is therefore in $V_k$. It follows that
$\alpha_j \lrcorner \mathbf{x}^\mathbf{u} =  \alpha_i \lrcorner (\frac{x_i}{x_j}\cdot \mathbf{x}^{\mathbf{u}}) \in S_1\lrcorner V_k.$ This establishes part~\ref{it:lemma_pt1}.

With the above notation, let $\mathbf{x}^{\mathbf{w}}$ be a monomial in $S^*_{k-1}$ that is greater than $\alpha_i \lrcorner \mathbf{x}^{\mathbf{u}}$.
Then $\mathbf{x}^{\mathbf{u}} = (\alpha_i \lrcorner \mathbf{x}^{\mathbf{u}})\cdot x_i < \mathbf{x}^{\mathbf{w}}\cdot x_i$, so the latter monomial is in $V_k$. As a result, $\mathbf{x}^\mathbf{w} = \alpha_i \lrcorner ( \mathbf{x}^{\mathbf{w}}\cdot x_i) \in S_1 \lrcorner V_k$.
\end{proof}

\begin{corollary}\label{cor:adding_greatest_monomials_gives_an_ideal}
Let $k$ be a positive integer. Assume that $V_k \subseteq S^*_k$ and $V_{k-1}\subseteq S^*_{k-1}$ are vector subspaces.
For $\bullet\in \{k-1, k\}$ 
let $W_\bullet = V_\bullet$ if $V_{\bullet} = S^*_{\bullet}$ and otherwise let $W_{\bullet} = V_{\bullet} + \langle F_{\bullet}\rangle$ where $F_{\bullet}$ is the largest monomial in $S^*_{\bullet}\setminus V_{\bullet}.$
If $S_{1}\lrcorner V_{k}\subseteq V_{k-1}$, then $S_{1}\lrcorner W_{k}\subseteq W_{k-1}$.
\end{corollary}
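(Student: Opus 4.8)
The plan is to peel off the trivial cases where $V_k$ or $V_{k-1}$ is the whole graded piece, and then, in the remaining case, to check the single generating relation $S_1\lrcorner\langle F_k\rangle\subseteq W_{k-1}$ monomial by monomial, using Lemma~\ref{lem:largest_monomial_lemma}.

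First I would dispose of the degenerate cases. If $V_k=S^*_k$, then $W_k=V_k$ and $S_1\lrcorner W_k=S_1\lrcorner S^*_k=S^*_{k-1}$ (this uses $k\geq 1$); moreover the hypothesis $S_1\lrcorner V_k\subseteq V_{k-1}$ forces $S^*_{k-1}=S_1\lrcorner S^*_k\subseteq V_{k-1}$, so $V_{k-1}=W_{k-1}=S^*_{k-1}$ and the desired inclusion is immediate. Likewise, if $V_{k-1}=S^*_{k-1}$ then $W_{k-1}=S^*_{k-1}$ and there is nothing to prove. Hence I may assume both $V_k$ and $V_{k-1}$ are proper subspaces, so that $W_k=V_k+\langle F_k\rangle$ and $W_{k-1}=V_{k-1}+\langle F_{k-1}\rangle$ where $F_\bullet$ denotes the largest monomial of $S^*_\bullet\setminus V_\bullet$.

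Now $S_1\lrcorner W_k=S_1\lrcorner V_k+\sum_{i=0}^n\langle\alpha_i\lrcorner F_k\rangle$, and $S_1\lrcorner V_k\subseteq V_{k-1}\subseteq W_{k-1}$ by hypothesis, so it suffices to show $\alpha_i\lrcorner F_k\in W_{k-1}$ for every $i$. Write $F_k=\mathbf{x}^{\mathbf{u}}$ and apply Lemma~\ref{lem:largest_monomial_lemma} to the proper subspace $V_k\subseteq S^*_k$. By Lemma~\ref{lem:largest_monomial_lemma}\ref{it:lemma_pt1}, for all but at most one index $i$ we have $\alpha_i\lrcorner F_k\in S_1\lrcorner V_k\subseteq W_{k-1}$, which handles those indices. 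For the possible exceptional index $i$, the second part of Lemma~\ref{lem:largest_monomial_lemma} tells us $\alpha_i\lrcorner F_k$ is the largest monomial of $S^*_{k-1}\setminus(S_1\lrcorner V_k)$; in particular it is a (nonzero) monomial. If $\alpha_i\lrcorner F_k\in V_{k-1}$ we are done, so assume otherwise. Then $\alpha_i\lrcorner F_k\in S^*_{k-1}\setminus V_{k-1}$, whence $\alpha_i\lrcorner F_k\leq F_{k-1}$. Conversely, from $S_1\lrcorner V_k\subseteq V_{k-1}$ we get the containment of sets $S^*_{k-1}\setminus V_{k-1}\subseteq S^*_{k-1}\setminus(S_1\lrcorner V_k)$; since $F_{k-1}$ lies in the left-hand set it lies in the right-hand set, so $F_{k-1}\leq\alpha_i\lrcorner F_k$ because the latter is the largest monomial of the right-hand set. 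The two inequalities give $\alpha_i\lrcorner F_k=F_{k-1}\in W_{k-1}$, finishing the argument.

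The only subtle point — the ``main obstacle'' — is the last step, identifying the exceptional derivative $\alpha_i\lrcorner F_k$ with $F_{k-1}$. It combines both parts of Lemma~\ref{lem:largest_monomial_lemma} with the elementary monotonicity that enlarging a subspace can only decrease the largest monomial lying outside it, and relies on the fact that $\alpha_i\lrcorner F_k$ is itself a monomial so that these order comparisons are meaningful.
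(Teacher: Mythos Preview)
Your proof is correct and follows essentially the same approach as the paper's: dispose of the degenerate cases, reduce to $S_1\lrcorner F_k\subseteq W_{k-1}$, and invoke Lemma~\ref{lem:largest_monomial_lemma} for the exceptional derivative. Your treatment of the final identification $\alpha_i\lrcorner F_k=F_{k-1}$ is in fact more careful than the paper's, which asserts this equality directly from the lemma without making explicit the case split on whether $\alpha_i\lrcorner F_k\in V_{k-1}$ and the two-sided monomial comparison you spell out.
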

\begin{proof}
If $V_k = W_k$ or $V_{k-1} = W_{k-1}$ the claim is trivially true.
Assume that both $V_{k}$ and $V_{k-1}$ are proper subspaces of $S^*_k$ and $S^*_{k-1}$, respectively.

It is enough to show that $S_{1}\lrcorner F_k \subseteq W_{k-1}$. There are two possibilities. Either $S_{1}\lrcorner F_k \subseteq (S_1\lrcorner V_k)$ or not. In the first case $S_{1}\lrcorner F_k$ is contained in $V_{k-1} \subseteq W_{k-1}$. In the latter case, by Lemma~\ref{lem:largest_monomial_lemma}  there exists a unique $\alpha_i \in S_{1}$ for which $\alpha_i \lrcorner F_{k} \notin (S_1\lrcorner V_{k})$.
Furthermore, by the same lemma $\alpha_i \lrcorner F_{k} = F_{k-1}$. It follows that $S_{1}\lrcorner F_{k} \subseteq W_{k-1}$.
\end{proof}

\begin{proposition}\label{prop:ideals_lift}
Assume that $J \subseteq S$ is a homogeneous ideal defining a zero-dimensional length $r$ subscheme of $\PP^n$ such that $H_{S/J}(k) \leq H_{S/J}(k+1)$ for every $k$. Let $a$ be the smallest integer for which $h_{r,\PP^n}(a) = r$ and let $b$ be the maximal integer for which $H_{S/J}(b) \neq r$.

For every $a\leq k\leq b$ let $n_k$ be the smallest integer such that if $\mathcal{M}_{k}= \{F_{k,1}, F_{k,2}, \ldots, F_{k,n_k}\}$ is the set of $n_k$ largest monomials in $S^*_k$, then $W_k= J^\perp_k + \langle \mathcal{M}_k \rangle$ is an $r$-dimensional linear space. 
The following hold
\begin{enumerate}[label=(\alph*)]
\item $I=\left(\bigoplus_{k=a}^b W_k^\perp\right) + J_{\geq b+1}$ is an ideal;
\item $H_{S/I} = h_{r,\PP^n}$;
\item $I_{\geq b+1} = J_{\geq b+1}$.
\end{enumerate}
\end{proposition}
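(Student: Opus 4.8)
The plan is to compute the graded pieces of $I$ explicitly, dispose of the two easy claims, and then reduce the ideal property (a) to a containment between the dual spaces $W_k$ that is governed by Corollary~\ref{cor:adding_greatest_monomials_gives_an_ideal}. First I would record some elementary facts. Since the subscheme has length $r$ and $H_{S/J}$ is nondecreasing, $H_{S/J}(k)<r$ for $k\le b$ and $H_{S/J}(k)=r$ for $k>b$; also $\dim(J^\perp)_k=\binom{n+k}{n}-\dim J_k=H_{S/J}(k)$, and for $k\ge a$ one has $\binom{n+k}{n}\ge r$, which is precisely what makes the $r$-dimensional spaces $W_k$ exist. Moreover $b\ge a-1$, because $k<a$ forces $H_{S/J}(k)\le\binom{n+k}{n}<r$. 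If $b=a-1$ the first summand is empty and $I=J_{\ge a}$, for which all three assertions are immediate, so assume $a\le b$. By construction $I_k=0$ for $k<a$, $I_k=W_k^\perp$ for $a\le k\le b$, and $I_k=J_k$ for $k\ge b+1$; this proves (c) outright, and (b) becomes the bookkeeping $\dim W_k^\perp=\binom{n+k}{n}-r$ for $a\le k\le b$ (since $\dim W_k=r$) together with $\dim J_k=\binom{n+k}{n}-r$ for $k>b$ (since $H_{S/J}(k)=r$). So the content is (a).

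For (a) I would first reformulate it via apolarity. Because $J$ is an ideal, $(J_k)^\perp=(J^\perp)_k$ inside $S^*_k$ and $J^\perp$ is an $S$-submodule of $S^*$; combined with the perfectness of the pairing $S_k\times S^*_k\to\CC$ and the adjunction $\langle \alpha f,F\rangle=\langle f,\alpha\lrcorner F\rangle$, the inclusion $S_1\cdot I_k\subseteq I_{k+1}$ is equivalent to $S_1\lrcorner W_{k+1}\subseteq W_k$ for $a\le k\le b-1$ (where I now also set $W_k:=(J^\perp)_k$ for every $k>b$, so that $I_k=W_k^\perp$ holds for all $k\ge a$). The other ranges are easy: for $k<a$ the inclusion $S_1\cdot I_k\subseteq I_{k+1}$ is trivial as $I_k=0$; for $k=b$ it dualizes to $S_1\lrcorner (J^\perp)_{b+1}\subseteq W_b$, which holds since $S_1\lrcorner (J^\perp)_{b+1}\subseteq (J^\perp)_b\subseteq W_b$; and for $k\ge b+1$ it is just $J$ being an ideal. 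So everything reduces to proving $S_1\lrcorner W_{k+1}\subseteq W_k$ for $a\le k\le b$.

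To prove that, I would realize each $W_k$ by a greedy process: let $W_k^{(0)}=(J^\perp)_k$ and obtain $W_k^{(j)}$ from $W_k^{(j-1)}$ by adjoining the largest monomial of $S^*_k$ not already in it (or nothing, once $S^*_k$ is reached). One checks that taking the $n_k$ largest monomials of $S^*_k$ all at once produces the same subspace as the greedy process, that with $t_k:=r-H_{S/J}(k)$ one has $W_k=W_k^{(t_k)}$, and that $\dim W_k^{(j)}=\min\{H_{S/J}(k)+j,\binom{n+k}{n}\}$, so in particular $\dim W_k^{(t_k)}=r$ for $k\ge a$. The base case $S_1\lrcorner W_{k+1}^{(0)}\subseteq W_k^{(0)}$ is the submodule property of $J^\perp$, and applying Corollary~\ref{cor:adding_greatest_monomials_gives_an_ideal} to the pair $(W_{k+1}^{(j)},W_k^{(j)})$ and inducting on $j$ yields $S_1\lrcorner W_{k+1}^{(j)}\subseteq W_k^{(j)}$ for all $j\ge 0$. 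Finally, since $H_{S/J}$ is nondecreasing we have $t_{k+1}\le t_k$, and the greedy process only enlarges the spaces, so $S_1\lrcorner W_{k+1}=S_1\lrcorner W_{k+1}^{(t_{k+1})}\subseteq W_k^{(t_{k+1})}\subseteq W_k^{(t_k)}=W_k$; dualizing back, $I$ is an ideal.

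The step I expect to be the main obstacle is the middle part of the last paragraph: matching the ``all $n_k$ largest monomials at once'' description of $W_k$ in the statement with the one-at-a-time construction that Corollary~\ref{cor:adding_greatest_monomials_gives_an_ideal} and Lemma~\ref{lem:largest_monomial_lemma} are built for, correctly identifying $t_k=r-H_{S/J}(k)$ as the number of steps, and then keeping the step index synchronized across the two adjacent degrees while exploiting the monotonicity $t_{k+1}\le t_k$. Everything else is either a routine dimension count or a direct citation of Corollary~\ref{cor:adding_greatest_monomials_gives_an_ideal} and standard adjunction properties of the apolarity pairing.
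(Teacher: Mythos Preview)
Your proposal is correct and follows essentially the same argument as the paper: both reduce (a) to the apolar containment $S_1\lrcorner W_{k+1}\subseteq W_k$, build each $W_k$ from $J_k^\perp$ by adjoining the largest missing monomials one at a time (your $W_k^{(j)}$ is the paper's $W_{k,j}$), induct via Corollary~\ref{cor:adding_greatest_monomials_gives_an_ideal}, and close with the monotonicity $t_{k+1}\le t_k$ (the paper's $h(k)\le h(k-1)$). Your treatment is slightly more explicit at the boundaries $k<a$ and $k=b$, but otherwise the two proofs coincide.
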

\begin{proof}
We introduce some more notation. Let $h = h_{r,\PP^n} - H_{S/J}$. For every $a\leq k \leq b$ and every $0< s \leq h(k)$ let $1\leq n_{k,s} \leq n_k$ be the smallest integer such that 
\[
W_{k, s} = \langle F_{k,1}, \ldots, F_{k, n_{k,s}} \rangle + J_k^\perp
\]
has dimension $H_{S/J}(k) + s$.

In order to show that $I$ is an ideal it suffices to show that $S_1 \lrcorner W_k \subseteq W_{k-1}$ for every $a+1\leq k \leq b$.
We show by induction on $l$ that $S_1$ maps $W_{k, l}$ into $W_{k-1, l}$. 
Note that we use here the assumption that $H_{S/J}(k-1) \leq H_{S/J}(k)$ to guarantee that $W_{k-1, l}$ is defined for all $0<l\leq h(k)$.

The case $l=1$ follows from Corollary~\ref{cor:adding_greatest_monomials_gives_an_ideal}. Let $2\leq l \leq h(k)$ and assume that $S_1\lrcorner W_{k, l-1} \subseteq W_{k-1, l-1}$.
Application of Corollary~\ref{cor:adding_greatest_monomials_gives_an_ideal} with 
$(W_{k, l-1}, W_{k,l})$ and
 $(W_{k-1, l-1}, W_{k-1,l})$ playing the roles of $(V_{k}, W_{k})$ and $(V_{k-1}, W_{k-1})$, respectively, shows that $S_1 \lrcorner W_{k, l} \subseteq W_{k-1, l}$. 
We have proved that 
\[
S_{1}\lrcorner W_{k} = S_{1}\lrcorner W_{k, h(k)} \subseteq W_{k-1,h(k)} \subseteq W_{k-1, h({k-1})} = V_{k-1}.
\]
This finishes the proof that $I$ is an ideal.

By construction we have $I_k \subseteq J_k$ for every $k\leq b$. Therefore, $I_{\geq b+1}  = J_{\geq b+1}$.
In particular, $H_{S/I}(k) = h_{r, \PP^n}(k)$ for every $k\geq b+1$. By the definition of $I$ we have $H_{S/I}(k) = h_{r, \PP^n}(k)$ for every $k\leq b$.
\end{proof}

\begin{corollary}\label{cor:standard_map_is_surjective}
The natural map $\Hilb^{h_{r,\PP^n}}_{S} \to \mathcal{H}ilb_r(\PP^n)$ given by $[I]\mapsto [\Proj(S/I)]$ is (set-theoretically) surjective.
\end{corollary}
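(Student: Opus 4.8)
The plan is to deduce Corollary~\ref{cor:standard_map_is_surjective} directly from Proposition~\ref{prop:ideals_lift}, so the main work is reducing an arbitrary length-$r$ point of $\mathcal{H}ilb_r(\PP^n)$ to the situation covered by that proposition. First I would recall that a closed point of $\mathcal{H}ilb_r(\PP^n)$ is a zero-dimensional subscheme $Z\subseteq \PP^n$ of length $r$, and that its (unique) saturated homogeneous ideal $J = I(Z)\subseteq S$ satisfies $H_{S/J}(k)=r$ for all $k\gg 0$, while $H_{S/J}(k)$ is the Hilbert function of $S/J$ in degree $k$. The first step is therefore to observe that $J$ is a homogeneous ideal defining a zero-dimensional length-$r$ subscheme of $\PP^n$, so the only hypothesis of Proposition~\ref{prop:ideals_lift} that needs checking is monotonicity: $H_{S/J}(k)\le H_{S/J}(k+1)$ for every $k$. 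For a zero-dimensional scheme the Hilbert function of the saturated ideal is well known to be non-decreasing (multiplication by a general linear form is injective on $S/J$ once we are in characteristic zero, or more elementarily one invokes that $\dim_\CC(S/J)_k$ is non-decreasing and stabilizes at $r$); this is where I would cite or quickly argue the standard fact. So the plan's first paragraph of actual content is: given $[Z]$, set $J=I(Z)$, note it satisfies all the hypotheses of Proposition~\ref{prop:ideals_lift}.

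Next, apply Proposition~\ref{prop:ideals_lift} to $J$ to produce a homogeneous ideal $I\subseteq S$ with $H_{S/I}=h_{r,\PP^n}$, hence $[I]\in \Hilb^{h_{r,\PP^n}}_S$, and with $I_{\ge b+1}=J_{\ge b+1}$ where $b$ is the largest degree in which $H_{S/J}$ has not yet reached $r$. The key point is then that $I$ and $J$ agree in all sufficiently large degrees, so they have the same saturation: $I^{\mathrm{sat}} = J^{\mathrm{sat}} = J$ (the last equality because $J$ is already saturated, being the ideal of a subscheme). Consequently $\Proj(S/I) = \Proj(S/I^{\mathrm{sat}}) = \Proj(S/J) = Z$, which shows $[I]$ maps to $[Z]$ under the natural map. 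I would spell out that $\Proj$ only depends on the ideal up to saturation, and that two homogeneous ideals which coincide in all large degrees have equal saturations — this is the elementary glue.

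Putting these together gives surjectivity on closed points: every $[Z]\in\mathcal{H}ilb_r(\PP^n)$ is hit by some $[I]\in\Hilb^{h_{r,\PP^n}}_S$. Since both schemes are of finite type over $\CC$, set-theoretic surjectivity on closed points is exactly the ``(set-theoretically) surjective'' claim in the statement, so no further argument is needed — in particular I would not attempt to upgrade this to scheme-theoretic surjectivity, which is not claimed. The main obstacle, such as it is, is purely bookkeeping: making sure the monotonicity hypothesis $H_{S/J}(k)\le H_{S/J}(k+1)$ is genuinely available for the saturated ideal of an arbitrary zero-dimensional scheme (not merely for one in general position), and being careful that $b$ in Proposition~\ref{prop:ideals_lift} is defined using $H_{S/J}$ so that ``agree above degree $b$'' really is the content that forces equal saturations. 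Everything else is a direct invocation of the proposition just proved.
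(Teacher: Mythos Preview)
Your proposal is correct and follows essentially the same approach as the paper: apply Proposition~\ref{prop:ideals_lift} to the saturated ideal $J$ of a given length-$r$ subscheme and observe that the resulting $I$ agrees with $J$ in all large degrees, hence has the same saturation and defines the same $\Proj$. The only cosmetic difference is that the paper first invokes \cite[Lem.~4.1]{HS04} to identify $\mathcal{H}ilb_r(\PP^n)$ with the multigraded Hilbert scheme $\Hilb_S^{f_{r,\PP^n}}$ (for $f_{r,\PP^n}$ the Hilbert function of $S/I_{\geq r}$) and then checks surjectivity of $[I]\mapsto [I_{\geq r}]$, whereas you argue directly via saturation; the paper, like you, leaves the monotonicity of $H_{S/J}$ as an implicit standard fact.
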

\begin{proof}
Let $[I] \in \Hilb^{h_{r,\PP^n}}_S$ and let $f_{r,\PP^n}$ be the Hilbert function of $S/I_{\geq r}$.
Using \cite[Lem.~4.1]{HS04}, it is enough to show that the map
$\Hilb^{h_{r,\PP^n}}_S \to \Hilb^{f_{r,\PP^n}}_S$ given by $[I]\mapsto [I_{\geq r}]$ is surjective.
Let $[K]\in \Hilb^{f_{r,\PP^n}}_S$ and let $J=K^{\sat}$. Let $b$ be the maximal integer $k$ for which $H_{S/J}(k) \neq r$.
By Proposition~\ref{prop:ideals_lift} there exists
$[I]\in \Hilb^{h_{r,\PP^n}}_S$ with $I_{\geq b+1} = J_{\geq b+1}$. We have $b+1 < r$ and $J_{\geq r} = K_{\geq r}$ so
$I_{\geq r} = K$.
\end{proof}

The following proposition describes all the cases when the multigraded Hilbert scheme parametrizing ideals of $r$ points in general position in $\PP^n$ is irreducible. The main observations were already done previously (see \cite{Man20} and \cite{JM22}).

\begin{proposition}\label{prop:classification_of_reducible_MGHS_pn}
The scheme $\Hilb^{h_{r,\PP^n}}_S$ is reducible if and only if $n\geq 2$ and $r\geq 4$.
\end{proposition}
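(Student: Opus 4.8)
The plan is to split the claim into its two implications. For the ``only if'' direction I would simply recall the known positive results: if $r\le 3$ or $n\le 1$, then by \cite{Man20} and \cite{JM22} the scheme $\Hilb^{h_{r,\PP^n}}_S$ is irreducible, so reducibility forces $n\ge 2$ and $r\ge 4$. The substance is the ``if'' direction: assuming $n\ge 2$ and $r\ge 4$, I must exhibit a point $[I]\in\Hilb^{h_{r,\PP^n}}_S$ that is not in $\Slip_{r,\PP^n}$, which shows the scheme has a component other than $\Slip_{r,\PP^n}$ and hence is reducible.

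For the construction I would first reduce to the case $n=2$. Indeed, by the projection criterion (Corollary~\ref{cor:projection_criterion}, or directly Theorem~\ref{thm:toric_fibration} applied to a linear projection $\PP^n\dashrightarrow\PP^2$ realized as a toric morphism after blowing up, or more simply the ambient-space analogue) any ideal in $\PP^2$ not lying in $\Slip_{r,\PP^2}$ lifts to an ideal in $\PP^n$ not lying in $\Slip_{r,\PP^n}$; alternatively one invokes \cite[Prop.~3.1]{CEVV09}-type statements used in Example~\ref{ex:segre} to pass from $\PP^n$ to a linear section. So it suffices to handle $n=2$, $r\ge 4$. Here I would build a monomial (Borel-fixed) ideal $I\subseteq\CC[\alpha_0,\alpha_1,\alpha_2]$ with Hilbert function $h_{r,\PP^2}$ whose saturation $J=I^{\mathrm{sat}}$ is the ideal of $r$ points on a line, say $J=(\alpha_1,\alpha_0^r)$ up to coordinates, mimicking Example~\ref{exa:p1_p1_is_reducible}: take in each degree $k$ with $\binom{k+2}{2}>r$ the span of the $\binom{k+2}{2}-r$ largest monomials in $J_k$ under a suitable lex order, and $0$ otherwise. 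Lemma~\ref{lem:largest_monomial_lemma} and Corollary~\ref{cor:adding_greatest_monomials_gives_an_ideal} (or a direct monomial argument as in Example~\ref{exa:p1_p1_is_reducible}) guarantee this is an ideal with the correct Hilbert function, and Proposition~\ref{prop:ideals_lift} constructs exactly such an $I$.

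It then remains to show $[I]\notin\Slip_{r,\PP^2}$. For this I would apply the criterion of \cite[Thm.~2.7]{Man20} exactly as in Example~\ref{exa:p1_p1_is_reducible}: the ideal $I$ is chosen so that some monomial of ``small'' degree, e.g.\ the unique monomial of degree $r-2$ in $J=(\alpha_1,\alpha_0^r)$ not lying in $I$ (namely $\alpha_0^{r-2}$, since $J_{r-2}=\langle\alpha_0^{r-2}\rangle$ has dimension $1=h_{r,\PP^2}(r-2)-\big(\binom{r}{2}-r\big)$... more precisely the gap between $I$ and $J$ in that degree), witnesses that $I$ fails the membership test for $\Slip_{r,\PP^2}$. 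One checks the numerics: $b+1<r$ where $b$ is the top degree with $H_{S/J}(b)\neq r$, so there genuinely is a degree where $I\subsetneq J$, which is what \cite[Thm.~2.7]{Man20} detects. Then the reduction from the previous paragraph transports non-membership to $\PP^n$.

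The main obstacle is the reduction step from $\PP^2$ to $\PP^n$: a linear projection $\PP^n\dashrightarrow\PP^2$ is not a morphism, so Theorem~\ref{thm:toric_fibration} does not apply verbatim. The cleanest fix is to instead build the bad ideal directly in $\PP^n$ for all $n\ge 2$ (the same recipe works: saturation equal to the ideal of $r$ collinear points, $r\ge 4$, with a small-degree monomial witnessing failure via \cite[Thm.~2.7]{Man20}), so that no reduction is needed and one only uses Proposition~\ref{prop:ideals_lift} to produce $[I]$ and \cite[Thm.~2.7]{Man20} to certify $[I]\notin\Slip_{r,\PP^n}$; the hypothesis $n\ge 2$ enters precisely in ensuring the Hilbert function $h_{r,\PP^n}$ differs from that of $S/J$ in a range of degrees so that the witnessing monomial exists, and $r\ge 4$ ensures that range is nonempty (for $r\le 3$ every such $I$ is already saturated). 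I would present the argument in this direct form.
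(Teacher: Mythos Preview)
Your final approach is correct and matches the paper's proof closely. For the reducible direction ($n\ge 2$, $r\ge 4$), the paper does exactly what you settle on after your self-correction: take $J=(\alpha_0,\ldots,\alpha_{n-2},\alpha_{n-1}^r)$, apply Proposition~\ref{prop:ideals_lift} to produce $[I]\in\Hilb^{h_{r,\PP^n}}_S$ with $I_{\ge b+1}=J_{\ge b+1}$, observe that by construction $\alpha_0^{r-2}\notin I$, and conclude $[I]\notin\Slip_{r,\PP^n}$ via \cite[Thm.~2.7]{Man20}. Your detour through a reduction to $\PP^2$ is unnecessary and, as you yourself recognise, does not go through via Theorem~\ref{thm:toric_fibration}; the paper simply works directly in $\PP^n$ from the start. (A small indexing slip: with your labeling $J=(\alpha_1,\alpha_0^r)$ the witnessing monomial would be $\alpha_1^{r-2}$, not $\alpha_0^{r-2}$.)

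The one place your proposal genuinely diverges from the paper is the irreducible direction. Rather than a blanket citation to \cite{Man20} and \cite{JM22}, the paper argues explicitly: for $n=1$, \cite[Lem.~4.1]{HS04} identifies $\Hilb^{h_{r,\PP^1}}_S$ with $\mathcal{H}ilb_r(\PP^1)$, which is irreducible; for $r\le 3$, one first reduces to $n\le 2$ via \cite[Prop.~3.1]{CEVV09}, then uses the paper's own Corollary~\ref{cor:standard_map_is_surjective} to get surjectivity of $\Slip_{r,\PP^n}\to\mathcal{H}ilb^{sm}_r(\PP^n)=\mathcal{H}ilb_r(\PP^n)$, and finally checks that for $r\le 3$ and $n\le 2$ any $[I]\in\Hilb^{h_{r,\PP^n}}_S$ is uniquely determined by $\Proj(S/I)$. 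This last step is where $r\le 3$ is actually used, and it is not a tautology (for instance, for $r=3$, $n=2$ and a collinear length-$3$ scheme one has $I=J_{\ge 2}\ne J=I^{\mathrm{sat}}$, but $I$ is still the \emph{unique} ideal with that $\Proj$ and Hilbert function). You should either verify that \cite{Man20} and \cite{JM22} explicitly contain the irreducibility statements you invoke, or adopt the paper's more self-contained argument.
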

\begin{proof}
If $n=1$, then $\Hilb^{h_{r, \PP^n}}_S$ is isomorphic to $\mathcal{H}ilb_r(\PP^1)$ by \cite[Lem.~4.1]{HS04}. Hence it is irreducible.

Assume that $r\leq 3$. By \cite[Prop.~3.1]{CEVV09} in order to show that $\Hilb_S^{h_{r, \PP^n}}$ is irreducible we may and do assume that $n\leq 2$. Let $[I]\in \Hilb^{h_{r, \PP^n}}_S$. 
Consider $[\Proj (S/I)]\in \mathcal{H}ilb_r(\PP^n)$. The natural map  from Corollary~\ref{cor:standard_map_is_surjective} induces a map $\Slip_{r, \PP^n}\to \mathcal{H}ilb^{sm}_r(\PP^n)$---the smoothable component of $\mathcal{H}ilb_r(\PP^n)$. This map is dominant and projective so is surjective. Since $r\leq 3$ and $n\leq 2$ we have $\mathcal{H}ilb^{sm}_r(\PP^n) = \mathcal{H}ilb_r(\PP^n)$. We conclude, that there is a point $[I']\in \Slip_{r, \PP^n}$ with $\Proj(S/I') = \Proj(S/I)$. Since $r\leq 3$, $I'=I$, so $[I]\in \Slip_{r, \PP^n}$.

Assume that $n\geq 2$ and $r\geq 4$. Let $J=(\alpha_0, \ldots, \alpha_{n-2}, \alpha_{n-1}^r)$. We claim that there exists a point $[I]$ in 
$\Hilb^{h_{r, \PP^n}}_S\setminus \Slip_{r, \PP^n}$ with $[\Proj(S/I)] = [\Proj(S/J)]$.
Let $I$ be the ideal constructed in the proof of Proposition~\ref{prop:ideals_lift}. By construction $\alpha_0^{r-2} \notin I$. Therefore, by \cite[Thm.~2.7]{Man20}, $[I]\notin \Slip_{r, \PP^n}$.
\end{proof}

\section{Tangent space criterion}
Let $X$ be a smooth projective complex toric variety and $S$ be its Cox ring. Fix a positive integer $r$. We denote by $\mathcal{C}(r,X)$ the set of all $[D]\in \Eff(X)$ with $\dim_\CC S_{[D]} \geq r$.
For any graded vector subspace $V\subseteq S$ and any subset $\mathcal{D} \subseteq \Pic(X)$ we denote by $V_{\mathcal{D}}$ the graded vector subspace $V_{\mathcal{D}} = \bigoplus_{[D]\in \mathcal{D}}V_{[D]}$ of $S$.

\begin{definition}\label{def:ssi}
A subset $\mathcal{E}\subseteq \Eff(X)$ is called $(r,X)$-sufficient if for every $B(X)$-saturated ideal $[I]\in \Hilb_{S}^{h_{r,X}}$ we have $I = ((I_{\mathcal{E}})\colon B(X)^\infty)$.
\end{definition}

If $\mathcal{E} \subseteq \Eff(X)$ is as in Definition~\ref{def:ssi} then every $B(X)$-saturated ideal $I$ with Hilbert function of the quotient algebra equal to $h_{r,X}$ can be reconstructed from $I_{\mathcal{E}}$. This means, that we can alter all the other graded parts of $I$ and we do not lose the information that we started from $I$. This observation is exploited in Proposition~\ref{prop:cg_subsets_give_necessary_conditions}.

\begin{proposition}\label{prop:cg_subsets_give_necessary_conditions}
Let $\mathcal{A} \subseteq \mathcal{B} \subseteq \Eff(X)$ be subsets such that 
\begin{enumerate}[label=(\roman*)]
    \item\label{con1:prop_ts} $\mathcal{B} + \Eff(X) \subseteq \mathcal{B}$
    \item\label{con2:prop_ts} $\mathcal{A} + \Eff(X) \subseteq \mathcal{A}$
    \item\label{con3:prop_ts} $\mathcal{B}\setminus \mathcal{A}$ is $(r,X)$-sufficient.
\end{enumerate}
If $[I]\in \Hilb_{S}^{h_{r,X}}$ and $J = I_\mathcal{B} + S_{\mathcal{A}}$, then
\begin{enumerate}[label=(\alph*)]
    \item\label{part1:prop_ts} $J$ is a homogeneous ideal
    \item\label{part2:prop_ts} if $g$ is the Hilbert function of $S/J$ then there is a morphism $\pi\colon \Hilb^{h_{r,X}}_{S} \to \Hilb^g_{S}$ given on closed points by $[I']\mapsto [I'_\mathcal{B} + S_{\mathcal{A}}]$;
    \item\label{part3:prop_ts} $\pi$ is injective on the set of $B(X)$-saturated ideals;
    \item\label{part4:prop_ts} if $[I]$ does not lie on any irreducible component of $\Satbar^{h_{r,X}}$ whose dimension is at most equal to  $\dim_\CC \Hom_S(J,S/J)_\mathbf{0}$, then $[I]$ is not in $\Satbar^{h_{r,X}}$;
    \item\label{part5:prop_ts} if $\dim_\CC \Hom_S(J,S/J)_\mathbf{0} < r\cdot \dim X$, then $[I]$ is not in $\Slip_{r,X}$.
\end{enumerate}
\end{proposition}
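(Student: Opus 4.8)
The plan is to establish the five parts in the order listed: \ref{part1:prop_ts} by an elementary degreewise check, \ref{part2:prop_ts} by a functor-of-points argument parallel to Proposition~\ref{prop:main_observation}\ref{it:prop_p1}, \ref{part3:prop_ts} from the defining property of $(r,X)$-sufficiency, and \ref{part4:prop_ts}--\ref{part5:prop_ts} by a dimension estimate resting on the tangent-space description of multigraded Hilbert schemes.

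For \ref{part1:prop_ts} I would work degree by degree: by construction $J_{[D]}=S_{[D]}$ if $[D]\in\mathcal{A}$, $J_{[D]}=I_{[D]}$ if $[D]\in\mathcal{B}\setminus\mathcal{A}$, and $J_{[D]}=0$ otherwise. Then I verify $S_{[E]}\cdot J_{[D]}\subseteq J_{[D+E]}$ for $[E]\in\Eff(X)$ in three cases: if $[D]\in\mathcal{A}$ then $[D+E]\in\mathcal{A}$ by \ref{con2:prop_ts} and both sides are all of $S$ in that degree; if $[D]\in\mathcal{B}\setminus\mathcal{A}$ then $[D+E]\in\mathcal{B}$ by \ref{con1:prop_ts} and $S_{[E]}\cdot I_{[D]}\subseteq I_{[D+E]}\subseteq J_{[D+E]}$ since $I$ is an ideal; if $[D]\notin\mathcal{B}$ the inclusion is trivial. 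For \ref{part2:prop_ts} I would run exactly this computation over an arbitrary $\CC$-algebra $R$: given $[I'_R]\in\Hilb^{h_{r,X}}_S(R)$, the graded submodule $J'_R\subseteq S\otimes_\CC R$ defined by the same trichotomy is an ideal, and $(S\otimes_\CC R/J'_R)_{[D]}$ equals $0$, $(S\otimes_\CC R/I'_R)_{[D]}$, or $(S\otimes_\CC R)_{[D]}$ in the three cases, hence is locally free of rank $g([D]):=0$, $h_{r,X}([D])$, or $\dim_\CC S_{[D]}$ — a function of $[D]$ alone, independent of $I'$. Compatibility with base change follows from these identities just as in the proof of Proposition~\ref{prop:main_observation}\ref{it:prop_p1}, producing $\pi$.

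Part \ref{part3:prop_ts} is where hypothesis \ref{con3:prop_ts} enters. If $[I_1],[I_2]$ are $B(X)$-saturated with $\pi([I_1])=\pi([I_2])$, then comparing degrees in $\mathcal{B}\setminus\mathcal{A}$ in $(I_1)_\mathcal{B}+S_\mathcal{A}=(I_2)_\mathcal{B}+S_\mathcal{A}$ gives $(I_1)_{\mathcal{B}\setminus\mathcal{A}}=(I_2)_{\mathcal{B}\setminus\mathcal{A}}$; since $\mathcal{B}\setminus\mathcal{A}$ is $(r,X)$-sufficient, Definition~\ref{def:ssi} gives $I_j=((I_j)_{\mathcal{B}\setminus\mathcal{A}}\colon B(X)^\infty)$ for $j=1,2$, so $I_1=I_2$. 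For \ref{part4:prop_ts} and \ref{part5:prop_ts} I would first note that $\Satbar^{h_{r,X}}$ is by definition the closure of its subset $U$ of $B(X)$-saturated ideals, so if $Z_1,\dots,Z_m$ are its irreducible components, the identity $\bigcup_i\overline{U\cap Z_i}=\overline{U}$ forces $\overline{U\cap Z_i}=Z_i$ for each $i$; thus $U$ is dense in every component. Since $\pi$ is injective on $U$ by \ref{part3:prop_ts} and is closed (a morphism of projective schemes), an injective morphism of finite-type $\CC$-schemes being dimension-preserving gives $\dim\pi(Z_i)=\dim(U\cap Z_i)=\dim Z_i$. Now if $[I]\in\Satbar^{h_{r,X}}$ lies on a component $Z$, then $[J]=\pi([I])\in\pi(Z)$, so $\pi(Z)$ is an irreducible closed subset of $\Hilb^g_S$ through $[J]$, whence
\[
\dim Z=\dim\pi(Z)\leq\dim_{[J]}\Hilb^g_S\leq\dim_\CC T_{[J]}\Hilb^g_S=\dim_\CC\Hom_S(J,S/J)_\mathbf{0},
\]
using the standard identification of the Zariski tangent space of a multigraded Hilbert scheme. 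Hence every component of $\Satbar^{h_{r,X}}$ through $[I]$ has dimension at most $\dim_\CC\Hom_S(J,S/J)_\mathbf{0}$, which is the contrapositive of \ref{part4:prop_ts}. For \ref{part5:prop_ts} I would use that $\Slip_{r,X}$, being an irreducible component of $\Hilb^{h_{r,X}}_S$ contained in $\Satbar^{h_{r,X}}$, is a component of $\Satbar^{h_{r,X}}$ of dimension $r\cdot\dim X$ — it is the closure of the image of the generically finite map $\psi_{r,X}$ from the $(r\cdot\dim X)$-dimensional variety $X^r_{gen}$ — so applying the displayed inequality with $Z=\Slip_{r,X}$ shows $[I]\in\Slip_{r,X}$ forces $r\cdot\dim X\leq\dim_\CC\Hom_S(J,S/J)_\mathbf{0}$.

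I expect the one genuinely delicate point to be \ref{part4:prop_ts}: one must be certain that $\pi$ does not collapse dimension on any irreducible component of $\Satbar^{h_{r,X}}$, and the route above pins this down by combining injectivity on the saturated locus (part \ref{part3:prop_ts}) with the density of that locus in each component. Once this is in place, translating the dimension bound into the numerical criteria of \ref{part4:prop_ts} and \ref{part5:prop_ts} is routine given $T_{[J]}\Hilb^g_S=\Hom_S(J,S/J)_\mathbf{0}$.
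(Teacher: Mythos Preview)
Your proposal is correct and follows essentially the same approach as the paper's proof, which is a terse three-sentence sketch invoking conditions \ref{con1:prop_ts}--\ref{con3:prop_ts} for parts \ref{part1:prop_ts}--\ref{part3:prop_ts} and the tangent-space identification $T_{[J]}\Hilb^g_S=\Hom_S(J,S/J)_\mathbf{0}$ together with density of the saturated locus in each component of $\Satbar^{h_{r,X}}$ for parts \ref{part4:prop_ts}--\ref{part5:prop_ts}. You have simply written out the details the paper leaves implicit, including the functor-of-points construction of $\pi$, the irreducibility argument showing $U\cap Z_i$ is dense in each component $Z_i$, and the dimension count $\dim\Slip_{r,X}=r\cdot\dim X$ via $\psi_{r,X}$.
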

\begin{proof}
Parts~\ref{part1:prop_ts}~and~\ref{part2:prop_ts} follow from conditions~\ref{con1:prop_ts}~and~\ref{con2:prop_ts}.
Part~\ref{part3:prop_ts} is a consequence of \ref{con3:prop_ts}. Finally, the last two parts follow from part~\ref{part3:prop_ts} since 
$\Hom_S(J,S/J)_\mathbf{0}$ is identified with the tangent space to $\Hilb^g_S$ at $[J]$ (see \cite[Prop.~1.6]{HS04}) and a general point of every irreducible component of $\Satbar^{h_{r,X}}$ corresponds to a $B(X)$-saturated ideal.
\end{proof}

To make Proposition~\ref{prop:cg_subsets_give_necessary_conditions} more useful we identify some conditions on a subset $\mathcal{E} \subseteq \Eff(X)$ that guarantee that $\mathcal{E}$ is $(r,X)$-sufficient.

\begin{lemma}\label{lem:ifthensufficient}
If $\mathcal{E}\subseteq \Eff(X)$ is such that for every $[D]\in \mathcal{C}(r,X)$ there exist $[E]\in \mathcal{E}\cap \mathcal{C}(r,X)$, $k\in \NN$, $[F],[G]\in \Nef(X)$ such that the following hold
\begin{enumerate}[label=(\roman*)]
    \item\label{ass1:lemma} $[E+F] \in \mathcal{E}\cap \mathcal{C}(r,X)$;
    \item\label{ass2:lemma} $[E+kF] = [D+G]$;
    \item\label{ass3:lemma} the multiplication map $S_{[F]}\otimes S_{[E+lF]} \to S_{[E+(l+1)F]}$ is surjective for every non-negative integer $l$;
\end{enumerate}
then $\mathcal{E}$ is $(r,X)$-sufficient.
\end{lemma}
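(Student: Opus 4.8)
The goal is to show that, under hypotheses \ref{ass1:lemma}--\ref{ass3:lemma}, a $B(X)$-saturated ideal $[I]\in\Hilb^{h_{r,X}}_S$ with $H_{S/I}=h_{r,X}$ is recovered from its truncation $I_{\mathcal{E}}$ via $I=((I_{\mathcal{E}})\colon B(X)^\infty)$. The inclusion $((I_{\mathcal{E}})\colon B(X)^\infty)\subseteq I$ is immediate because $I_{\mathcal{E}}\subseteq I$ and $I$ is $B(X)$-saturated. So the content is the reverse inclusion: every homogeneous element of $I$ lies in $((I_{\mathcal{E}})\colon B(X)^\infty)$. It suffices to show that for every $[D]\in\Pic(X)$ one has $I_{[D]}\subseteq ((I_{\mathcal{E}})\colon B(X)^\infty)_{[D]}$; and since $B(X)$-saturation is tested against any ideal with the same radical, it is enough to produce, for each such $[D]$, a single $[G]\in\Nef(X)$ (or more precisely an element of $B(X)$ of degree $[G']$ with $V(G')$ contained in $V(B(X))$-complement appropriately) such that $S_{[G]}\cdot I_{[D]}\subseteq (I_{\mathcal{E}})_{[D+G]}$, i.e. $S_{[G]}\cdot I_{[D]}\subseteq I_{[D+G]}\cap(\text{span of }S_{\mathcal{E}}\text{-generated part})$.

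The first step is a reduction to degrees in $\mathcal{C}(r,X)$. If $[D]\notin\mathcal{C}(r,X)$, then $\dim_\CC S_{[D]}<r$, so since $H_{S/I}([D])=h_{r,X}([D])=\dim_\CC S_{[D]}$ we get $I_{[D]}=0$ and there is nothing to prove. So fix $[D]\in\mathcal{C}(r,X)$ and invoke the hypothesis to obtain $[E]\in\mathcal{E}\cap\mathcal{C}(r,X)$, an integer $k$, and $[F],[G]\in\Nef(X)$ with $[E+F]\in\mathcal{E}\cap\mathcal{C}(r,X)$, $[E+kF]=[D+G]$, and the multiplication maps $S_{[F]}\otimes S_{[E+lF]}\to S_{[E+(l+1)F]}$ surjective for all $l\ge 0$. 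The plan is: (1) show $S_{[F]}\cdot I_{[E+lF]}=I_{[E+(l+1)F]}$ for all $l\ge 0$, i.e. $I$ is ``generated in degree $[E]$ along the ray $[E]+\NN[F]$''; (2) conclude by induction that $I_{[E+kF]}=S_{[kF]}\cdot I_{[E]}$ lies inside $(I_{\mathcal{E}})$, using $[E],[E+F]\in\mathcal{E}$ together with hypothesis \ref{ass3:lemma} in the form that once two consecutive members of the ray are generated, all of them are; (3) transport this from degree $[E+kF]=[D+G]$ back to degree $[D]$ using the nef divisor $[G]$, i.e. $S_{[G]}\cdot I_{[D]}\subseteq I_{[D+G]}=I_{[E+kF]}\subseteq(I_{\mathcal{E}})$, so that $I_{[D]}\subseteq((I_{\mathcal{E}})\colon B(X)^\infty)$ provided the linear system $|G|$ has no common base locus outside $V(B(X))$, which holds because $[G]$ is nef and globally generated on the toric $X$ (so the zero loci of a spanning set of sections of $\mathcal{O}_X(G)$ have empty intersection, and pulling back to $\overline X$ this says $\sqrt{(S_{[G]})}\supseteq B(X)$).

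For step (1), the key point is a dimension count: by \ref{ass3:lemma} the multiplication $S_{[F]}\otimes (S/I)_{[E+lF]}\to(S/I)_{[E+(l+1)F]}$ is surjective, hence $h_{r,X}([E+(l+1)F])\le h_{r,X}([E+lF])$ would force these Hilbert values equal; but both are $r$ since $[E],[E+F]\in\mathcal{C}(r,X)$ implies (via surjectivity of multiplication and $[F]$ nef, so $S_{[E+lF]}$ only grows) that $[E+lF]\in\mathcal{C}(r,X)$ for all $l$, so $h_{r,X}([E+lF])=r$ for every $l\ge 0$. Comparing dimensions, $\dim I_{[E+(l+1)F]}=\dim S_{[E+(l+1)F]}-r$ while $\dim(S_{[F]}\cdot I_{[E+lF]})\ge\dim S_{[E+(l+1)F]}-r$ because the cokernel of $S_{[F]}\otimes I_{[E+lF]}\to S_{[E+(l+1)F]}/(S_{[F]}\cdot I_{[E+lF]})$ surjects onto $(S/I)_{[E+(l+1)F]}$ of dimension $r$, and $S_{[F]}\otimes S_{[E+lF]}\twoheadrightarrow S_{[E+(l+1)F]}$ gives the upper bound $\le\dim S_{[E+(l+1)F]}$. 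Chasing this gives $S_{[F]}\cdot I_{[E+lF]}=I_{[E+(l+1)F]}$.

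The main obstacle I anticipate is step (3): cleanly arguing that the nef class $[G]$ (or the two nef classes $[F],[G]$, which on a general smooth toric $X$ need only be nef, not ample) suffices to bridge between $V(B(X))$-saturation and the truncation. One has to be careful that "nef" on a projective toric variety implies globally generated (this is standard: \cite[Thm.~6.3.12]{CLS11}), hence the base locus of $|G|$ is empty as a subset of $X$, hence $V(S_{[G]})\subseteq V(B(X))$ in $\overline X$, which is exactly $B(X)\subseteq\sqrt{(S_{[G]})}$ — the statement needed to pass from $S_{[G]}\cdot I_{[D]}\subseteq(I_{\mathcal{E}})$ to $I_{[D]}\subseteq((I_{\mathcal{E}})\colon B(X)^\infty)$. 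A secondary care-point is verifying that $(I_{\mathcal{E}})_{[E+kF]}$ really contains $S_{[kF]}\cdot I_{[E]}$: this uses only that $[E]\in\mathcal{E}$, so $I_{[E]}\subseteq I_{\mathcal{E}}$, hence $S\cdot I_{[E]}\subseteq(I_{\mathcal{E}})$ and in particular its degree-$[E+kF]$ piece is in $(I_{\mathcal{E}})$; the role of $[E+F]\in\mathcal{E}$ and the full strength of \ref{ass3:lemma} is then only needed if one wants the cleaner statement that $I_{[E+kF]}$ (not merely $S_{[kF]}\cdot I_{[E]}$, which by step (1) equals it) lies in $(I_{\mathcal{E}})$, so these two facts fit together exactly.
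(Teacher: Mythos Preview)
Your step (1) --- the claim that $S_{[F]}\cdot I_{[E+lF]}=I_{[E+(l+1)F]}$ for all $l\ge 0$ --- is false, and the dimension argument you give for it actually proves the opposite inequality. The surjection $S_{[E+(l+1)F]}/(S_{[F]}\cdot I_{[E+lF]})\twoheadrightarrow (S/I)_{[E+(l+1)F]}$ yields $\dim(S_{[F]}\cdot I_{[E+lF]})\le \dim S_{[E+(l+1)F]}-r$, not $\ge$. For a concrete counterexample take $X=\PP^1\times\PP^1$, $r=2$, and $I=(x_0y_1,x_1y_0,x_0x_1,y_0y_1)$, the saturated ideal of the two points $([1{:}0],[1{:}0])$ and $([0{:}1],[0{:}1])$, which has $H_{S/I}=h_{2,X}$. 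With $[E]=(1,0)$ and $[F]=(0,1)$ one has $[E]\in\mathcal{C}(2,X)$, but $I_{[E]}=0$, so $S_{[F]}\cdot I_{[E]}=0\ne I_{[E+F]}=\langle x_0y_1,x_1y_0\rangle$. More generally, whenever $\dim_\CC S_{[E]}=r$ exactly, $I_{[E]}=0$ and your equality fails at $l=0$; nothing in the hypotheses excludes this. Since your steps (2)--(3) rest on the conclusion $I_{[E+kF]}=S_{[kF]}\cdot I_{[E]}\subseteq(I_{\mathcal E})$, the argument does not go through.

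The paper avoids this by never attempting to show that $I$ is generated along the ray. Instead it sets $J=((I_{\mathcal E})\colon B(X)^\infty)$ and compares Hilbert functions: since $J$ is $B(X)$-saturated, \cite[Prop.~3.1]{MacSmi04} furnishes a nonzerodivisor $\ell\in S_{[F]}$ on $S/J$, so multiplication by $\ell$ gives injections $(S/J)_{[E+lF]}\hookrightarrow(S/J)_{[E+(l+1)F]}$. Knowing $H_{S/J}([E])=H_{S/J}([E+F])=r$ (because $[E],[E+F]\in\mathcal E$), one bootstraps along the ray using \ref{ass3:lemma} to get $H_{S/J}([E+lF])=r$ for all $l$; then a nonzerodivisor of degree $[G]$ transports this back to $H_{S/J}([D])\le r$. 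The conclusion $J=I$ then follows from $J\subseteq I$ and equality of Hilbert functions, without ever needing $I_{[D+G]}\subseteq(I_{\mathcal E})$. Your step (3) observation that nef implies basepoint-free on a smooth projective toric variety is correct and parallels the paper's use of a nonzerodivisor of degree $[G]$, but the crux you are missing is the nonzerodivisor of degree $[F]$ on $S/J$, which is what makes the Hilbert-function propagation work.
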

\begin{proof}
Let $[I] \in \Hilb_S^{h_{r,X}}$ be a $B(X)$-saturated ideal and $J = ((I_{\mathcal{E}})\colon B(X)^\infty)$.
We have $J \subseteq I$ and for every $[D]\in \mathcal{E}$ we have $J_{[D]} = I_{[D]}$.

Let $[D]\in \mathcal{C}(r,X)$ and let $k, [E],[F]$ and $[G]$ satisfy \ref{ass1:lemma}--\ref{ass3:lemma}. By the above, we have $H_{S/J}([E]) = H_{S/J}([E+F]) = r$. There is a nonzerodivisor $\ell$ on $S/J$ of degree $[F]$ (see \cite[Prop.~3.1]{MacSmi04}) so multiplication by $\ell$ defines a surjection $(S/J)_{[E]} \to (S/J)_{[E+F]}$. It follows by induction using assumption \ref{ass3:lemma} that $H_{S/J}([E+lF]) = r$ for every $l\in \mathbb{Z}_{\geq 0}$. In particular, $r=H_{S/J}([E+kF]) = H_{S/J}([D+G])$. Since there 
is a nonzerodivisor on $S/J$ of degree $[G]$ we conclude that $H_{S/J}([D])\leq r$.
We showed that $H_{S/J}([D]) \leq r$ for every $[D]\in\mathcal{C}(r,X)$. However, $J\subseteq I$ and $S/I$ has Hilbert function $h_{r,X}$ so we conclude that $J = I$.
\end{proof}

The approach to showing that $\mathcal{E}$ is $(r,X)$-sufficient presented in Lemma~\ref{lem:ifthensufficient} is based on the existence of nonzerodivisors on the quotient algebra of a $B(X)$-saturated ideal. Another possibility of finding $(r,X)$-sufficient subsets comes from the notion of the (multigraded) Castelnuovo-Mumford regularity.

\begin{remark}
    Suppose that $\mathcal{E} \subseteq \Eff(X)$ is a subset such that for every $B(X)$-saturated ideal $[I]\in \Hilb_{S}^{h_{r,X}}$ and any minimal generator $f$ of $I$ there exists a non-negative integer $k$ such that for every minimal generator $g$ of $B(X)^k$ we have $\deg(fg)\in \mathcal{E}$. Then $\mathcal{E}$ is $(r,X)$-sufficient. 
    
    Let $e = \min \{a\mid \dim_\CC S[\PP^n]_a \geq r\}$.  It follows from \cite[Thm.~4.2]{Eis05} that if $\mathcal{E}$ contains any degree greater than $e$, then $\mathcal{E}$ is $(r, \PP^n)$-sufficient.
    For more general toric varieties one may consider the multigraded Castelnuovo-Mumford regularity \cite{MacSmi04}\cite{MacSmi03},  to try to obtain analogous bounds on the degrees of minimal generators of saturated ideals.
\end{remark}

\begin{example}\label{exa:tsex11}
    Let $S = \CC[\alpha_0, \alpha_1, \alpha_2]$ and $B = (S_1)$ be the irrelevant ideal. Pick an ideal $[I]\in \Hilb_{S}^{h_{6,\PP^2}}$. We consider the Hilbert functions of three more related ideals.
\[
\begin{array}{|c|c|c|c|c|}
\hline
     \text{Ideal} & \text{Hilbert function} & \mathcal{A} & \mathcal{B}   & \mathcal{B}\setminus \mathcal{A} \\  \hline
            I\cap B^4 & (1,3,6,10,6,6,\ldots )& \emptyset  & \{4,5, \ldots\} & \{4,5,\ldots\}  \\
            I+B^5 & (1,3,6,6,6,0,0,0, \ldots)& \{5,6,\ldots\}  & \NN   & \{0,1,2,3,4\} \\
            I\cap B^4+B^6 & (1,3,6,10,6,6,0,0,\ldots) & \{6,7,\ldots \}  & \{4,5,\ldots\}  & \{4,5\} \\
            \hline
\end{array}
\]
We have $\mathcal{C}(6, \PP^2) = \{2,3,\ldots\}$. Using Lemma~\ref{lem:ifthensufficient} it can be shown that $\{3,4\}$ and $\{4,5\}$ are $(6, \PP^2)$-sufficient and thus so are $\mathcal{B}\setminus \mathcal{A}$ from all the rows.
If $[I]\in \Slip_{6, \PP^2}$ then using Proposition~\ref{prop:cg_subsets_give_necessary_conditions} with $\mathcal{B}$ and  $\mathcal{A}$ as in the table we conclude that $\dim_\CC \Hom_S(K,S/K)_0 \geq 12$ for $K \in \{I\cap B^4, I+B^5, I\cap B^4+B^6\}$.
Consider the ideal
\[
I = (\alpha_0^3, \alpha_0\alpha_1^2, \alpha_0^2\alpha_2, \alpha_0\alpha_1\alpha_2, \alpha_0\alpha_2^4, \alpha_1^6).
\]
We have
\[
\dim_\CC \Hom_S(I+B^5, S/(I+B^5))_{0} = 8 < 12 = \dim \Slip_{6, \PP^2}.
\]
Therefore, $[I]\notin \Slip_{6, \PP^2}$.
\end{example}

\begin{example}
    Let $S=\CC[\alpha_0, \alpha_1, \beta_0,\beta_1]$ be the Cox ring of $\PP^1\times \PP^1$. We define the ideals $\mathfrak{a} = (\alpha_0,\alpha_1)$, $\mathfrak{b} = (\beta_0,\beta_1)$ and $B=\mathfrak{a}\mathfrak{b}$. Pick an ideal $[I]\in \Hilb_{S}^{h_{2,\PP^1\times \PP^1}}$. We consider the Hilbert functions of four more related ideals. The corresponding subsets of $\Nef(\PP^1\times \PP^1) = \NN^2$ are as follows.
\begin{figure}[H]
 \begin{minipage}{.24\textwidth}
 \centering
  \begin{tikzpicture}[scale=0.75]
    \coordinate (Origin)   at (0,0);
    \coordinate (XAxisMin) at (0,0);
    \coordinate (XAxisMax) at (4.2,0);
    \coordinate (YAxisMin) at (0,0);
    \coordinate (YAxisMax) at (0,4.2);
    \draw [thin, black,-latex] (XAxisMin) -- (XAxisMax);
    \draw [thin, black,-latex] (YAxisMin) -- (YAxisMax);

    \clip (-0.3,-0.3) rectangle (4.3cm,4.3cm); 
    \coordinate (Bone) at (0,1);
    \coordinate (Btwo) at (1,0);
\foreach \x in{1,...,4} 
\draw[xshift=\x cm] (0,2pt) -- (0,-2pt) node[below,fill=white]{};
\foreach \y in{1,...,4} 
\draw[yshift=\y cm] (-2pt,0) -- (2pt,0) node[left]{};
    \foreach \x in {2,3,4}{
     \foreach \y in {0,1,2,3,4}{
         \node[draw,circle,inner sep=2pt, fill] at (\x,\y) {};
   }
   }
    \foreach \x in {0,1}{
     \foreach \y in {0,1,2,3,4}{
         \node[draw,circle,inner sep=2pt, fill, color=gray] at (\x,\y) {};
   }
   }
	 \end{tikzpicture}\caption{$I+\mathfrak{a}^2$}
\end{minipage}
 \begin{minipage}{.24\textwidth}
 \centering
  \begin{tikzpicture}[scale=0.75]
    \coordinate (Origin)   at (0,0);
    \coordinate (XAxisMin) at (0,0);
    \coordinate (XAxisMax) at (4.2,0);
    \coordinate (YAxisMin) at (0,0);
    \coordinate (YAxisMax) at (0,4.2);
    \draw [thin, black,-latex] (XAxisMin) -- (XAxisMax);
    \draw [thin, black,-latex] (YAxisMin) -- (YAxisMax);

    \clip (-0.3,-0.3) rectangle (4.3cm,4.3cm); 
    \coordinate (Bone) at (0,1);
    \coordinate (Btwo) at (1,0);

\foreach \x in{1,...,4} 
\draw[xshift=\x cm] (0,2pt) -- (0,-2pt) node[below,fill=white]{};
\foreach \y in{1,...,4} 
\draw[yshift=\y cm] (-2pt,0) -- (2pt,0) node[left]{};
   \foreach \x in {2,3,4}{
      \foreach \y in {1,2,3,4}{
        \node[draw,circle,inner sep=2pt, fill, color=gray] at (\x,\y) {};
    }
    }
	\end{tikzpicture}\caption{$I\cap \mathfrak{a}^2\mathfrak{b}$}
\end{minipage}
 \begin{minipage}{.24\textwidth}
 \centering
  \begin{tikzpicture}[scale=0.75]
    \coordinate (Origin)   at (0,0);
    \coordinate (XAxisMin) at (0,0);
    \coordinate (XAxisMax) at (4.2,0);
    \coordinate (YAxisMin) at (0,0);
    \coordinate (YAxisMax) at (0,4.2);
    \draw [thin, black,-latex] (XAxisMin) -- (XAxisMax);
    \draw [thin, black,-latex] (YAxisMin) -- (YAxisMax);

    \clip (-0.3,-0.3) rectangle (4.3cm,4.3cm); 
    \coordinate (Bone) at (0,1);
    \coordinate (Btwo) at (1,0);

\foreach \x in{1,...,4} 
\draw[xshift=\x cm] (0,2pt) -- (0,-2pt) node[below,fill=white]{};
\foreach \y in{1,...,4} 
\draw[yshift=\y cm] (-2pt,0) -- (2pt,0) node[left]{};
     \foreach \x in {0,1,2}{
       \foreach \y in {0,1,2}{
         \node[draw,circle,inner sep=2pt, fill, color=gray] at (\x,\y) {};
     }
     }
    \foreach \x in {3,4,5}{
       \foreach \y in {0,1,2,3,4,5}{
         \node[draw,circle,inner sep=2pt, fill, color=black] at (\x,\y) {};
     }
     }
    \foreach \x in {0,1,2}{
       \foreach \y in {3,4,5}{
         \node[draw,circle,inner sep=2pt, fill, color=black] at (\x,\y) {};
     }
     }
	\end{tikzpicture}\caption{$I + \mathfrak{a}^3+\mathfrak{b}^3$}
\end{minipage}
 \begin{minipage}{.24\textwidth}
 \centering
  \begin{tikzpicture}[scale=0.75]
    \coordinate (Origin)   at (0,0);
    \coordinate (XAxisMin) at (0,0);
    \coordinate (XAxisMax) at (4.2,0);
    \coordinate (YAxisMin) at (0,0);
    \coordinate (YAxisMax) at (0,4.2);
    \draw [thin, black,-latex] (XAxisMin) -- (XAxisMax);
    \draw [thin, black,-latex] (YAxisMin) -- (YAxisMax);

    \clip (-0.3,-0.3) rectangle (4.3cm,4.3cm); 
    \coordinate (Bone) at (0,1);
    \coordinate (Btwo) at (1,0);

\foreach \x in{1,...,4} 
\draw[xshift=\x cm] (0,2pt) -- (0,-2pt) node[below,fill=white]{};
\foreach \y in{1,...,4} 
\draw[yshift=\y cm] (-2pt,0) -- (2pt,0) node[left]{};
     \foreach \x in {1,2}{
      \foreach \y in {1,2}{
         \node[draw,circle,inner sep=2pt, fill, color=gray] at (\x,\y) {};
     }
     }
     \foreach \x in {3,4}{
      \foreach \y in {1,2,3,4}{
         \node[draw,circle,inner sep=2pt, fill, color=black] at (\x,\y) {};
     }
     }     
    \foreach \x in {1,2}{
      \foreach \y in {3,4}{
         \node[draw,circle,inner sep=2pt, fill, color=black] at (\x,\y) {};
     }
     }
	\end{tikzpicture}\caption{$(I+\mathfrak{a}^3+\mathfrak{b}^3)\cap B$}
\end{minipage}
\end{figure}
Dots correspond to $\mathcal{B}$. Black ones correspond to $\mathcal{A}$ and gray ones correspond to $\mathcal{B}\setminus \mathcal{A}$. Using Lemma~\ref{lem:ifthensufficient} it can be shown that $\mathcal{B}\setminus \mathcal{A}$ is $(2, \PP^1\times \PP^1)$-sufficient in each of the four cases. Therefore, it follows from Proposition~\ref{prop:cg_subsets_give_necessary_conditions} that if $[I]\in \Slip_{2, \PP^1\times \PP^1}$ then for every $K\in \{I+\mathfrak{a}^2, I\cap \mathfrak{a}^2\mathfrak{b}, I+\mathfrak{a}^3+\mathfrak{b}^3, (I+\mathfrak{a}^3+\mathfrak{b}^3)\cap B\}$ we have $\dim_\CC \Hom_S(K, S/K)_{(0,0)} \geq 4$.
\end{example}

\begin{example}\label{exa:hr}
Let $a\geq 1$ be an integer and $X=\mathcal{H}_a = \PP(\mathcal{O}_{\PP^1}\oplus \mathcal{O}_{\PP^1}(a))$ be the Hirzebruch surface. Its fan is the complete fan in $\mathbb{R}^2$ whose rays are spanned by $u_1=(1,0), u_2=(0,-1), u_3=(-1,a)$ and $u_4=(0,1)$.
If we choose the torus invariant divisors of $X$ corresponding to the rays $\operatorname{span}(u_3)$ and $\operatorname{span}(u_4)$ as a basis of $\Pic(X)\cong \ZZ^2$, then the Cox ring is $S=\CC[\alpha_1, \alpha_2, \alpha_3, \alpha_4]$ with $\deg(\alpha_1) = (1,0)$, $\deg(\alpha_2)  = (a,1)$, $\deg(\alpha_3)= (1,0)$ and $\deg(\alpha_4)=(0,1)$ (see \cite[Thm.~4.2.1]{CLS11}). 
Furthermore, $\Nef(X) = \NN (1,0) + \NN (a,1)$.

Let $I= (\alpha_1\alpha_3,\alpha_1\alpha_2, \alpha_1^a\alpha_4, \alpha_2^2)$, $\mathcal{A} = \{(u_1,u_2)\in \NN^2 \mid u_2\geq 2\}$ and $\mathcal{B}=\NN^2$.
We verify that $\mathcal{B}\setminus \mathcal{A}$ is $(2, \mathcal{H}_a)$-sufficient using Lemma~\ref{lem:ifthensufficient}. 
Let $D=(u_1,u_2)\in \mathbb{N}^2$. Choose $G=(d,0)\in \NN^2$ such that $d+u_1-au_2\geq 1$. Let $E=(d+u_1-au_2, 0)$ and $F=(a,1)$. We have $E, E+F\in \mathcal{C}(2,X)\cap \mathcal{B} \setminus \mathcal{A}$, $D+G = E+u_2F$ and $F,G \in \Nef(X)$. We are left with verifying that $S_F\otimes S_{E+lF} \to S_{E+(l+1)F}$ is surjective for every non-negative integer $l$. This follows from the following equality that holds for any $k\geq 0$ 
\[
S_{E+kF} = \langle \alpha_2^i\alpha_4^{k-i} \alpha_1^j\alpha_3^{d+u_1+(k-u_2-i)a - j}\rangle_{0\leq i \leq k, 0\leq j \leq d+u_1+(k-u_2-i)a}.
\]

Conditions~\ref{con1:prop_ts}~and~\ref{con2:prop_ts} from Proposition~\ref{prop:cg_subsets_give_necessary_conditions} are clearly satisfied.
We compute $\dim_\CC \Hom_S(I+S_{\mathcal{A}}, S/(I+S_\mathcal{A}))_{(0,0)} = 2$. Therefore, $[I]\notin \Slip_{2, X}$ by Proposition~\ref{prop:cg_subsets_give_necessary_conditions}.
\end{example}

Since the case of $X=\PP^{n_1}\times \cdots \times \PP^{n_d}$ is of greatest importance we present a version of Proposition~\ref{prop:cg_subsets_give_necessary_conditions}
in that case with $\mathcal{B} = \NN^d$.

\begin{proposition}\label{prop:ts_products_general}
Let $X=\PP^{n_1}\times\cdots\times\PP^{n_d}$ and fix a positive integer $r$. Let $\mathcal{C} = \{\mathbf{u}\in \ZZ^d_{\geq 0} \mid \dim_{\CC}S_{\mathbf{u}} \geq  r\}$. Assume that $\mathcal{A}\subseteq \ZZ_{\geq 0}^{d}$ is a subset satisfying the conditions:
\begin{enumerate}[label=(\roman*)]
    \item If $\mathbf{u}\in \mathcal{A}$, then $\mathbf{u}+\mathbf{e}_i\in \mathcal{A}$  where $\mathbf{e}_i$ is the $i$-th element of the standard basis of $\ZZ^d$.
    \item For every $\mathbf{u}\in \mathcal{C}$ there exist $k\in \ZZ_{>0}$, $\mathbf{v}\in \mathcal{C}\setminus \mathcal{A}$ and $\mathbf{w}, \mathbf{w}'\in \ZZ_{\geq 0}^d$ such that:
    \begin{itemize}
        \item $\mathbf{v}+\mathbf{w} \in \mathcal{C}\setminus \mathcal{A}$
        \item $\mathbf{v}+k\mathbf{w} = \mathbf{u}+\mathbf{w'}.$
    \end{itemize}
\end{enumerate}
Let $\mathfrak{a} = (S_{\mathbf{u}} \mid \mathbf{u}\in \mathcal{A})$.
If $[I]\in \Slip_{r, X}$, then $\dim_\CC \Hom_{S}(I+\mathfrak{a}, S/(I+\mathfrak{a}))_{\mathbf{0}} \geq r\cdot \dim X.$
\end{proposition}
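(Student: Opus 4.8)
The plan is to obtain this as the special case $\mathcal{B} = \ZZ_{\geq 0}^d$ of Proposition~\ref{prop:cg_subsets_give_necessary_conditions}. For $X = \PP^{n_1}\times\cdots\times\PP^{n_d}$ one has $\Pic(X)\cong\ZZ^d$ and $\Eff(X) = \Nef(X) = \ZZ_{\geq 0}^d$, and the set $\mathcal{C}(r,X) = \{[D]\in\Eff(X) \mid \dim_\CC S_{[D]}\geq r\}$ is precisely the set $\mathcal{C}$ of the statement. So the task reduces to verifying the three hypotheses of Proposition~\ref{prop:cg_subsets_give_necessary_conditions} for the pair $\mathcal{A}\subseteq\mathcal{B} = \ZZ_{\geq 0}^d$ and then reading off part~\ref{part5:prop_ts} of its conclusion.

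First I would dispatch the bookkeeping. Hypothesis~\ref{con1:prop_ts}, that $\mathcal{B}+\Eff(X)\subseteq\mathcal{B}$, is trivial since $\ZZ_{\geq 0}^d+\ZZ_{\geq 0}^d = \ZZ_{\geq 0}^d$. Hypothesis~\ref{con2:prop_ts}, that $\mathcal{A}+\Eff(X)\subseteq\mathcal{A}$, follows by induction from assumption~(i) of the present proposition, since $\ZZ_{\geq 0}^d$ is generated as a monoid by $\mathbf{e}_1,\ldots,\mathbf{e}_d$. I would also record two routine identifications: because $I$ is a $\ZZ_{\geq 0}^d$-graded ideal of $S$ we have $I_\mathcal{B} = I$; and because assumption~(i) forces $\mathfrak{a}_\mathbf{u} = S_\mathbf{u}$ for $\mathbf{u}\in\mathcal{A}$ and $\mathfrak{a}_\mathbf{u} = 0$ otherwise, we have $S_\mathcal{A} = \mathfrak{a}$. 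Hence the ideal $J = I_\mathcal{B}+S_\mathcal{A}$ produced by Proposition~\ref{prop:cg_subsets_give_necessary_conditions} equals $I+\mathfrak{a}$.

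The one hypothesis with content is~\ref{con3:prop_ts}: that $\mathcal{B}\setminus\mathcal{A} = \ZZ_{\geq 0}^d\setminus\mathcal{A}$ is $(r,X)$-sufficient. I would deduce this from Lemma~\ref{lem:ifthensufficient} with $\mathcal{E} = \ZZ_{\geq 0}^d\setminus\mathcal{A}$. Given $[D] = \mathbf{u}\in\mathcal{C}(r,X) = \mathcal{C}$, assumption~(ii) of the present proposition supplies $k\in\ZZ_{>0}$, $\mathbf{v}\in\mathcal{C}\setminus\mathcal{A}$, and $\mathbf{w},\mathbf{w}'\in\ZZ_{\geq 0}^d$ with $\mathbf{v}+\mathbf{w}\in\mathcal{C}\setminus\mathcal{A}$ and $\mathbf{v}+k\mathbf{w} = \mathbf{u}+\mathbf{w}'$; I would then take $[E] = \mathbf{v}$, $[F] = \mathbf{w}$, $[G] = \mathbf{w}'$. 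Since $\mathcal{C}\subseteq\ZZ_{\geq 0}^d$ we have $\mathcal{E}\cap\mathcal{C}(r,X) = \mathcal{C}\setminus\mathcal{A}$, so $[E]$ and $[E+F]$ lie in $\mathcal{E}\cap\mathcal{C}(r,X)$; condition~\ref{ass2:lemma} is $\mathbf{v}+k\mathbf{w} = \mathbf{u}+\mathbf{w}'$; and $[F],[G]\in\ZZ_{\geq 0}^d = \Nef(X)$. The only ingredient not handed over directly is condition~\ref{ass3:lemma}, the surjectivity of $S_{[F]}\otimes S_{[E+lF]}\to S_{[E+(l+1)F]}$ for all $l\geq 0$; but $S$ is a tensor product $S = \bigotimes_{i=1}^d S^{(i)}$ of standard-graded polynomial rings $S^{(i)} = \CC[\alpha_{i0},\ldots,\alpha_{in_i}]$, so for all $\mathbf{a},\mathbf{b}\in\ZZ_{\geq 0}^d$ the multiplication map $S_\mathbf{a}\otimes S_\mathbf{b}\to S_{\mathbf{a}+\mathbf{b}}$ is the tensor product over $i$ of the surjective multiplication maps $S^{(i)}_{a_i}\otimes S^{(i)}_{b_i}\to S^{(i)}_{a_i+b_i}$, hence surjective. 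Thus Lemma~\ref{lem:ifthensufficient} applies and hypothesis~\ref{con3:prop_ts} holds.

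With all three hypotheses verified, Proposition~\ref{prop:cg_subsets_give_necessary_conditions}\ref{part5:prop_ts} gives: if $\dim_\CC\Hom_S(I+\mathfrak{a},S/(I+\mathfrak{a}))_{\mathbf{0}} < r\cdot\dim X$ then $[I]\notin\Slip_{r,X}$, and its contrapositive is exactly the assertion of the proposition. I do not expect a genuine obstacle here: the argument is essentially a dictionary between the hypotheses of this proposition and those of Proposition~\ref{prop:cg_subsets_give_necessary_conditions} and Lemma~\ref{lem:ifthensufficient}, the only mathematically substantive (though entirely standard) point being the surjectivity of multiplication on the Cox ring of a product of projective spaces.
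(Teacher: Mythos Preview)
Your proof is correct and follows exactly the same approach as the paper: apply Proposition~\ref{prop:cg_subsets_give_necessary_conditions} with $\mathcal{B}=\ZZ_{\geq 0}^d$ and verify that $\ZZ_{\geq 0}^d\setminus\mathcal{A}$ is $(r,X)$-sufficient via Lemma~\ref{lem:ifthensufficient}. The paper's proof is a two-line sketch of precisely this argument, and you have simply filled in the routine verifications (closure of $\mathcal{A}$ under $\Eff(X)$, identification $J=I+\mathfrak{a}$, surjectivity of multiplication in the Cox ring of a product of projective spaces) that the paper leaves implicit.
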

\begin{proof}
    By Proposition~\ref{prop:cg_subsets_give_necessary_conditions} it is enough to verify that $\NN^d\setminus \mathcal{A}$ is $(r,X)$-sufficient. This follows from Lemma~\ref{lem:ifthensufficient}.
\end{proof}

Even when $X$ is the product of projective spaces and $\mathcal{B} = \NN^d$ there is still some freedom for the choice of $\mathcal{A}$. However, in what follows we restrict to $\mathcal{A}$ being the set of all degrees where the square of the irrelevant ideal of a fixed factor is non-zero.

\begin{corollary}\label{cor:ts_products}
Let $X=\PP^{n_1}\times \cdots \times \PP^{n_d}$ for some $d\geq 2$ and $n_1,\ldots, n_d \geq 1$.
Let $S[\PP^{n_i}] = \CC[\alpha_{i0}, \ldots, \alpha_{in_i}]$ be the Cox ring of the $i$-th factor.
Let $\mathfrak{a}_i$ be the extension of the irrelevant ideal $(\alpha_{i0}, \ldots, \alpha_{in_i})$ of $\PP^{n_i}$ to the Cox ring $S$ of $X$.
If $[I] \in \Slip_{r,X}$, then for every $i\in \{1,2,\ldots, d\}$
\[
\dim_\CC \Hom_S(I+\mathfrak{a}_i^2, S/(I+\mathfrak{a}_i^2))_\mathbf{0} \geq r\cdot \dim X.
\]
\end{corollary}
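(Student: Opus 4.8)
The plan is to obtain this as a special case of Proposition~\ref{prop:ts_products_general}, taking $\mathcal{A}$ to be the set of degrees on which $\mathfrak{a}_i^2$ occupies the whole graded piece. Concretely, I would fix $i$ and set $\mathcal{A} = \{\mathbf{u}\in\ZZ_{\geq 0}^d\mid u_i\geq 2\}$. The first step is to check that $\mathfrak{a}_i^2 = (S_{\mathbf{u}}\mid \mathbf{u}\in\mathcal{A})$: the generators $\alpha_{ij}\alpha_{ik}$ of $\mathfrak{a}_i^2$ have degree $2\mathbf{e}_i$, and a monomial of degree $\mathbf{u}$ lies in $\mathfrak{a}_i^2$ exactly when it is divisible by a product of two (not necessarily distinct) of the variables $\alpha_{i0},\ldots,\alpha_{in_i}$, i.e. exactly when $u_i\geq 2$; hence $(\mathfrak{a}_i^2)_{\mathbf{u}} = S_{\mathbf{u}}$ when $u_i\geq 2$ and $(\mathfrak{a}_i^2)_{\mathbf{u}} = 0$ otherwise. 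Thus $I+\mathfrak{a}_i^2$ is precisely the ideal $I+\mathfrak{a}$ appearing in Proposition~\ref{prop:ts_products_general} for this $\mathcal{A}$, and it then remains only to verify hypotheses (i) and (ii) there.

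Hypothesis (i) is immediate, since $u_i\geq 2$ forces $(\mathbf{u}+\mathbf{e}_l)_i\geq 2$ for every $l$. For hypothesis (ii), given $\mathbf{u}\in\mathcal{C}$, I would use $d\geq 2$ to pick an index $j\neq i$ and then, using $n_j\geq 1$ so that $\binom{m+n_j}{n_j}\to\infty$, choose $m\in\ZZ_{\geq 0}$ with $\binom{m+n_j}{n_j}\geq r$; finally set $\mathbf{v} = m\mathbf{e}_j$, $\mathbf{w} = \mathbf{e}_i+\sum_{l\neq i}u_l\mathbf{e}_l$ and $k = \max(u_i,1)\in\ZZ_{>0}$. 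I would then check that $\dim_\CC S_{\mathbf{v}} = \binom{m+n_j}{n_j}\geq r$ and $v_i = 0$, so that $\mathbf{v}\in\mathcal{C}\setminus\mathcal{A}$; that $\dim_\CC S_{\mathbf{v}+\mathbf{w}}\geq\dim_\CC S_{\mathbf{v}}\geq r$ (using that $\dim_\CC S_{\mathbf{u}}=\prod_l\binom{u_l+n_l}{n_l}$ is nondecreasing in each coordinate of $\mathbf{u}$) while $(\mathbf{v}+\mathbf{w})_i = 1$, so that $\mathbf{v}+\mathbf{w}\in\mathcal{C}\setminus\mathcal{A}$; and that the $i$-th coordinate of $\mathbf{v}+k\mathbf{w}$ equals $k\geq u_i$ while its $l$-th coordinate for $l\neq i$ equals $v_l+ku_l\geq u_l$, so that $\mathbf{w}':=\mathbf{v}+k\mathbf{w}-\mathbf{u}\in\ZZ_{\geq 0}^d$ and $\mathbf{v}+k\mathbf{w}=\mathbf{u}+\mathbf{w}'$. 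With (i) and (ii) verified, Proposition~\ref{prop:ts_products_general} yields $\dim_\CC\Hom_S(I+\mathfrak{a}_i^2,S/(I+\mathfrak{a}_i^2))_{\mathbf{0}}\geq r\cdot\dim X$.

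I do not expect a genuine obstacle here. The only delicate point is arranging that the $i$-th coordinates of $\mathbf{v}$ and of $\mathbf{v}+\mathbf{w}$ stay $\leq 1$ (so that both remain outside $\mathcal{A}$) while simultaneously forcing $\dim_\CC S_{\mathbf{v}}\geq r$; this is exactly where the hypotheses $d\geq 2$ and $n_j\geq 1$ enter, since they let one concentrate all of the degree of $\mathbf{v}$ onto a factor other than the $i$-th. The remaining verifications are the two elementary monotonicity observations above together with the bookkeeping check that $\mathbf{v}+k\mathbf{w}\geq\mathbf{u}$ coordinatewise.
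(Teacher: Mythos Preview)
Your argument is correct and follows the same route as the paper: both apply the tangent-space criterion (via Proposition~\ref{prop:ts_products_general}, respectively Proposition~\ref{prop:cg_subsets_give_necessary_conditions} and Lemma~\ref{lem:ifthensufficient}) with $\mathcal{A}=\{\mathbf{u}\mid u_i\geq 2\}$ and verify $(r,X)$-sufficiency by exhibiting an explicit witness. The only cosmetic difference is the choice of witness: the paper takes $[E]=\sum_{j\neq i}\max\{a_j,r\}\mathbf{e}_j$, $[F]=\mathbf{e}_i$, $k=a_i$, whereas you concentrate $\mathbf{v}$ on a single factor $j\neq i$ and fold the remaining coordinates into $\mathbf{w}$; both choices exploit $d\geq 2$ in the same way.
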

\begin{proof}
We want to apply Proposition~\ref{prop:cg_subsets_give_necessary_conditions} with $\mathcal{A} = \{(u_1, \ldots, u_d)\in \NN^d \mid u_i \geq 2\}$ and $\mathcal{B} = \NN^d$.
Conditions~\ref{con1:prop_ts}~and~\ref{con2:prop_ts} hold. We verify using Lemma~\ref{lem:ifthensufficient} that $\mathcal{B}\setminus \mathcal{A}$ is $(r,X)$-sufficient.

Let $[D] = (a_1, \ldots, a_d) \in \mathbb{N}^d$. Let $\mathbf{e}_i$ be the $i$-th coordinate vector of $\ZZ^d$ in the standard basis.
In the notation from Lemma~\ref{lem:ifthensufficient} we take $[E] = \sum_{j\neq i} \max\{a_j, r\}\mathbf{e}_j$, $[F] = \mathbf{e}_i$, $[G] = \sum_{j\neq i} \max\{r-a_j, 0\}\mathbf{e}_j$ and $k=a_i$.
\end{proof}

For the remainder of this section $X=\PP^m\times \PP^n$ for some $m,n \geq 1$. 
The Cox ring $S = S[X]$ of $X$ is of the form $\CC[\alpha_0, \ldots, \alpha_m, \beta_0 \ldots, \beta_n]$ where 
$\deg(\alpha_i) = (1,0)\in \mathbb{Z}^2$  for every $i\in \{0,1,\ldots, m\}$ and $\deg(\beta_j) = (0,1)\in \ZZ^2$ for every $j\in \{0,1,\ldots, n\}$. We show that $\Hilb_{S}^{h_{r,X}}$ is reducible for $r\in \{2,3\}$ using Corollary~\ref{cor:ts_products}. Let $\mathfrak{a} = (\alpha_0, \ldots, \alpha_m)$.

\begin{example}\label{exa:2pts}
We start with $r=2$ and $m=n=1$. Consider the ideal 
$I = (\beta_0\beta_1, \beta_0\alpha_0, \beta_0\alpha_1, \alpha_0^2).$
A direct calculation shows that $\dim_\mathbb{C} \Hom_S(I+\mathfrak{a}^2, S/(I+\mathfrak{a}^2))_{(0,0)} = 2 < 4 = \dim \Slip_{2,X}$.
Therefore, $[I] \notin \Slip_{2,X}$ by Corollary~\ref{cor:ts_products}.

If $m \geq 2$ or $n\geq 2$ take $I' = (\alpha_2, \ldots, \alpha_m, \beta_2, \ldots, \beta_n)+I$. We get $\dim_\CC \Hom_S(I'+\mathfrak{a}^2, S/(I'+\mathfrak{a}^2))_{(0,0)} \leq  2(m-1)+2(n-1) + 2 =2(m+n)-2 < 2(m+n)=\dim \Slip_{2,X}$. Therefore, $[I'] \notin \Slip_{2,X}$ by Corollary~\ref{cor:ts_products}.
\end{example}

\begin{example}\label{exa:3pts}
Similarly as in Example~\ref{exa:2pts} in order to show that $\Hilb_S^{h_{3,X}}$ is reducible it is enough to consider the cases $(m,n) \in \{(1,1), (2,1), (2,2)\}$.
\begin{enumerate}
\item If $(m,n)=(1,1)$ take $I=(\beta_0^2\beta_1, \alpha_0\beta_0, \alpha_0^3, \alpha_1^2\beta_0, \alpha_1\beta_0^2)$.
\item If $(m,n) = (2,1)$ take $I = (\beta_0\beta_1^2, \alpha_0\beta_0,\alpha_1\beta_0, \alpha_2\beta_0, \alpha_0^2, \alpha_0\alpha_1, \alpha_1^2)$.
\item If $(m,n) = (2,2)$ take $I=(\beta_1\beta_2^2, \beta_0^2, \beta_0\beta_1, \beta_0\beta_2, \alpha_0\beta_0, \alpha_0\beta_1, \alpha_1\beta_0, \alpha_1\beta_1, \alpha_2\beta_0, \alpha_2\beta_1, \alpha_0^2, \alpha_0\alpha_1, \alpha_1^2)$.
\end{enumerate}

In all cases $\dim_\CC \Hom_S(I+\mathfrak{a}^2, S/(I+\mathfrak{a}^2))_{(0,0)} < 3(m+n)$ so by Corollary~\ref{cor:ts_products}, those ideals are not in $\Slip_{3,X}$.
\end{example}

\section{Reducible multigraded Hilbert schemes of ideals of points in general position in a product of projective spaces}
\begin{lemma}\label{lem:1point}
If $X=\PP^{n_1}\times \cdots \times \PP^{n_d}$ for some positive integers $d,n_1, \ldots, n_d$, then $\Hilb^{h_{1,X}}_{S[X]}$ is irreducible.
\end{lemma}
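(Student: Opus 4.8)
The plan is to prove the stronger set-theoretic statement that every point of $\Hilb^{h_{1,X}}_{S[X]}$ is the homogeneous ideal of a single reduced point of $X$. Since the ideals of reduced points form the image of the irreducible variety $X$ under the morphism recording the ideal of a point (the $r=1$ case of Proposition~\ref{prop:existance_of_morphism}, whose source $X^1_{gen}$ is all of $X$ because one reduced point always has Hilbert function $h_{1,X}$), this image equals $\Slip_{1,X}$; showing it is everything therefore yields $\Hilb^{h_{1,X}}_{S[X]}=\Slip_{1,X}$, which is irreducible.

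Write $S=S[X]$ with its $\ZZ^d$-grading, so that $S_{\mathbf u}\neq 0$ precisely for $\mathbf u\in\ZZ_{\geq 0}^d$ and hence $h_{1,X}(\mathbf u)=1$ for such $\mathbf u$ and $0$ otherwise. Fix $[I]\in\Hilb^{h_{1,X}}_{S[X]}$. For each $i$ the component $I_{\mathbf e_i}$ is a codimension-one subspace of the $(n_i+1)$-dimensional space $S_{\mathbf e_i}$ of linear forms in the $i$-th block of Cox variables; being $n_i$-dimensional it is spanned by $n_i$ linearly independent linear forms, whose common zero locus in the $i$-th factor $\PP^{n_i}$ is a single point $p_i$, and $I_{\mathbf e_i}$ is exactly the $n_i$-dimensional space of linear forms on $\PP^{n_i}$ vanishing at $p_i$. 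Set $p=(p_1,\dots,p_d)\in X$ and let $\mathfrak p\subseteq S$ be the ideal generated by $I_{\mathbf e_1}\cup\cdots\cup I_{\mathbf e_d}$. Then $\mathfrak p$ is the sum of the extensions to $S$ of the point ideals $I(p_i)\subseteq S[\PP^{n_i}]$ (each generated in degree one), so $S/\mathfrak p\cong\bigotimes_{i=1}^d S[\PP^{n_i}]/I(p_i)\cong\CC[t_1,\dots,t_d]$ with $\deg t_i=\mathbf e_i$, which has Hilbert function $h_{1,X}$; in particular $\mathfrak p$ is prime, hence radical, and one checks directly (or via \cite[Cor.~3.8]{Cox95}) that it is $B(X)$-saturated, so $[\mathfrak p]\in\Slip_{1,X}$. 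Since every generator of $\mathfrak p$ lies in $I$ we have $\mathfrak p\subseteq I$, and two homogeneous ideals related by inclusion with the same (everywhere finite) Hilbert function agree in every degree, so $I=\mathfrak p$. As $[I]$ was arbitrary, $\Hilb^{h_{1,X}}_{S[X]}=\Slip_{1,X}$ is irreducible.

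This lemma has no serious obstacle: the only points needing care are the elementary fact that a codimension-one space of linear forms in $n_i+1$ variables cuts out a single point of $\PP^{n_i}$, and the identification of $\mathfrak p$ with the homogeneous ideal of the reduced point $p$ together with the ensuing tensor-product computation of $S/\mathfrak p$; the rest is bookkeeping with Hilbert functions.
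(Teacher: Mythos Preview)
Your proof is correct and complete. It differs from the paper's argument in a genuine way, so a short comparison is worthwhile.

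The paper argues by contradiction that any $[I]\in\Hilb^{h_{1,X}}_{S[X]}$ is already $B(X)$-saturated and radical: if the saturation $J$ were strictly larger there would be a degree $\mathbf u$ with $J_{\mathbf u}=S_{\mathbf u}$, forcing $J_{\mathbf u+\mathbf v}=S_{\mathbf u+\mathbf v}$ for all $\mathbf v$ and contradicting the Hilbert polynomial; and if the radical $K$ were strictly larger one uses the radical property to propagate $K_{\mathbf u}=S_{\mathbf u}$ down to all $\mathbf v\in\NN_{>0}^d$, forcing $B(X)\subseteq\sqrt{I}$ and hence $I$ not saturated after all. No point of $X$ is ever named.

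You instead reconstruct the point directly: the codimension-one pieces $I_{\mathbf e_i}$ single out $p_i\in\PP^{n_i}$, the ideal $\mathfrak p$ they generate is visibly the point ideal of $p=(p_1,\dots,p_d)$ with $S/\mathfrak p\cong\CC[t_1,\dots,t_d]$, and then $\mathfrak p\subseteq I$ together with equality of Hilbert functions forces $I=\mathfrak p$. Your route is more explicit and yields the extra information that $\psi_{1,X}\colon X\to\Hilb^{h_{1,X}}_{S[X]}$ is a bijection on closed points; the paper's route is marginally shorter, avoids invoking Proposition~\ref{prop:existance_of_morphism}, and generalises verbatim to any smooth projective toric $X$ without singling out the degrees $\mathbf e_i$ (which are special to products of projective spaces). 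Both are perfectly adequate here.
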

\begin{proof}
It is enough to show that every homogeneous ideal $I$ of $S[X]$ with Hilbert function of $S[X]/I$ equal to $h_{1,X}$ is radical and $B(X)$-saturated. Let $J = (I\colon B(X)^\infty)$ and $K = \operatorname{rad}(I)$. If $J\neq I$, then there exists $\mathbf{u}\in \mathbb{N}^d$ with $J_{\mathbf{u}} = S[X]_{\mathbf{u}}$. It follows that for every $\mathbf{v} \in \mathbb{N}^d$ we have $J_{\mathbf{u}+\mathbf{v}} = S[X]_{\mathbf{u}+\mathbf{v}}$. This contradicts the fact that $S[X]/J$ has Hilbert polynomial $1$.

Assume that $K\neq I$. There is $\mathbf{u}\in \mathbb{N}^d$ with $K_{\mathbf{u}} = S[X]_{\mathbf{u}}$. Let $s = \max \{u_1, \ldots, u_d\}$.
Then, for every $\mathbf{v}\in \NN_{>0}^d$ we have $s\mathbf{v}-\mathbf{u} \in \NN^d$. It follows that $K_{\mathbf{v}} = S[X]_{\mathbf{v}}$  for every $\mathbf{v}\in \NN_{>0}^d$. In particular, $(I\colon B(X)^\infty) \neq I$.
\end{proof}

We summarize the results from earlier sections in the following result.

\begin{theorem}\label{thm:classification}
Let $r, d, n_1, \ldots, n_d$ be positive integers and $X=\PP^{n_1}\times \cdots \times \PP^{n_d}$.
The scheme $\Hilb^{h_{r, X}}_{S[X]}$ is irreducible if and only if one of the following holds:
\begin{enumerate}
\item $r=1$;
\item $d=1$ and $n_1 = 1$;
\item $d=1$ and $r\leq 3$.
\end{enumerate}
\end{theorem}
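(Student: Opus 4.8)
\emph{Plan.} I would prove the two implications separately, and for the nontrivial one (the contrapositive of ``only if'') reduce every case to the two situations already settled in the paper: $X=\PP^n$, handled by Proposition~\ref{prop:classification_of_reducible_MGHS_pn}, and $X=\PP^a\times\PP^b$, handled by Examples~\ref{exa:p1_p1_is_reducible},~\ref{exa:2pts} and~\ref{exa:3pts}. I would also record at the outset that $\Slip_{r,X}$ is always nonempty, hence genuinely an irreducible component of $\Hilb_{S[X]}^{h_{r,X}}$: this is because $r$ points of $X$ in general position carry a $B(X)$-saturated radical ideal whose quotient has Hilbert function exactly $h_{r,X}$, which is how $h_{r,X}$ was defined.

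For the ``if'' direction: if $r=1$ then $\Hilb_{S[X]}^{h_{1,X}}$ is irreducible by Lemma~\ref{lem:1point}. If $d=1$, then $X=\PP^{n_1}$ and Proposition~\ref{prop:classification_of_reducible_MGHS_pn} tells us that $\Hilb_{S[\PP^{n_1}]}^{h_{r,\PP^{n_1}}}$ is reducible exactly when $n_1\geq 2$ \emph{and} $r\geq 4$; hence it is irreducible whenever $n_1=1$ or $r\leq 3$, which covers cases (2) and (3).

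For the ``only if'' direction I would prove the contrapositive: assume $r\geq 2$ and that it is not the case that $d=1$ with ($n_1=1$ or $r\leq 3$); I must show $\Hilb_{S[X]}^{h_{r,X}}$ is reducible. The key mechanism is the following observation. Suppose there is a surjective morphism $\pi\colon \Hilb_{S[X]}^{h_{r,X}}\to \Hilb_{S[Z]}^{h_{r,Z}}$ with $\pi(\Slip_{r,X})\subseteq \Slip_{r,Z}$, and suppose $\Hilb_{S[Z]}^{h_{r,Z}}$ is reducible. Then $\Slip_{r,Z}$ is a proper irreducible component of $\Hilb_{S[Z]}^{h_{r,Z}}$, so there is $[J]\notin \Slip_{r,Z}$; a preimage $[I]$ of $[J]$ under $\pi$ cannot lie in $\Slip_{r,X}$, so $\Slip_{r,X}$ is a proper component of $\Hilb_{S[X]}^{h_{r,X}}$, i.e.\ the latter is reducible. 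Now if $d=1$ we are forced to have $n_1\geq 2$ and $r\geq 4$, so reducibility is immediate from Proposition~\ref{prop:classification_of_reducible_MGHS_pn}. If $d\geq 2$, write $X=(\PP^{n_1}\times\PP^{n_2})\times Y$ with $Y=\PP^{n_3}\times\cdots\times\PP^{n_d}$ (and $Y$ a point, i.e.\ no projection, when $d=2$); combining Corollary~\ref{cor:projection_criterion} with Proposition~\ref{prop:projections_from_product_are_surjective} produces a surjective morphism $\Hilb_{S[X]}^{h_{r,X}}\to \Hilb_{S[\PP^{n_1}\times\PP^{n_2}]}^{h_{r,\PP^{n_1}\times\PP^{n_2}}}$ carrying $\Slip$ into $\Slip$. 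So, via the mechanism above, it suffices to show that $\Hilb_{S[\PP^a\times\PP^b]}^{h_{r,\PP^a\times\PP^b}}$ is reducible for all $a,b\geq 1$ and all $r\geq 2$. For $r=2$ and $r=3$ this is exactly Examples~\ref{exa:2pts} and~\ref{exa:3pts}. For $r\geq 4$: if $a=b=1$ it is Example~\ref{exa:p1_p1_is_reducible}; otherwise, say $a\geq 2$, and one more application of the mechanism to the projection $\PP^a\times\PP^b\to\PP^a$ (again Corollary~\ref{cor:projection_criterion} plus Proposition~\ref{prop:projections_from_product_are_surjective}) reduces to the reducibility of $\Hilb_{S[\PP^a]}^{h_{r,\PP^a}}$, which holds by Proposition~\ref{prop:classification_of_reducible_MGHS_pn} since $a\geq 2$ and $r\geq 4$.

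Since every ingredient is already available in the excerpt, there is no genuine mathematical obstacle here; the theorem is essentially a matter of assembling the earlier results. The one place to be careful is the case bookkeeping: one must make sure the reduction to two projective-space factors is always legitimate (it is, precisely because $d\geq 2$), and in the residual $r\geq 4$, $a=b=1$ case one should not try to ``project $\PP^1$ onto something smaller'' but instead invoke Example~\ref{exa:p1_p1_is_reducible} directly. I would also double-check that the map of Corollary~\ref{cor:projection_criterion} coincides with the restriction map shown surjective in Proposition~\ref{prop:projections_from_product_are_surjective}, so that ``surjective morphism carrying $\Slip$ into $\Slip$'' is fully justified in each step.
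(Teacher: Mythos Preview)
Your proposal is correct and follows essentially the same route as the paper: the same split into the ``if'' direction (Lemma~\ref{lem:1point} and Proposition~\ref{prop:classification_of_reducible_MGHS_pn}) and the ``only if'' direction via the projection morphisms (Corollary~\ref{cor:projection_criterion} together with Proposition~\ref{prop:projections_from_product_are_surjective}), reducing to Proposition~\ref{prop:classification_of_reducible_MGHS_pn} and Examples~\ref{exa:p1_p1_is_reducible},~\ref{exa:2pts},~\ref{exa:3pts}. The only noticeable difference is cosmetic: the paper orders the reducibility cases by first treating $r\geq 4$ (splitting on $n_1\geq 2$ versus $n_1=1$) and then $r\in\{2,3\}$, whereas you split first on $d=1$ versus $d\geq 2$; and where you invoke the $\Slip$-into-$\Slip$ property to exhibit a point outside $\Slip_{r,X}$, the paper simply uses that a surjective image of an irreducible scheme is irreducible, so surjectivity alone already forces reducibility of the source once the target is reducible---your mechanism works but is slightly more than needed.
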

\begin{proof}
The case $r=1$ follows from Lemma~\ref{lem:1point}.

If $d=1$ and either $n_1=1$ or $r\leq 3$, then $\Hilb_{S[X]}^{h_{r, X}}$ is irreducible by Proposition~\ref{prop:classification_of_reducible_MGHS_pn}.
Therefore, cases 1.--3. correspond indeed to irreducible multigraded Hilbert scheme. We show that in all the other cases the scheme $\Hilb_{S[X]}^{h_{r, X}}$ is reducible.

We may and do assume that $n_1 \geq n_2 \geq \cdots \geq n_d$. Assume that $r\geq 4$ and consider two cases $n_1 \geq 2$ and $n_1=1$.
In the former case consider the natural map $\Hilb^{h_{r, X}}_{S[X]} \to \Hilb^{h_{r, \PP^{n_1}}}_{S[\PP^{n_1}]}$ described in  Corollary~\ref{cor:projection_criterion}.
It is surjective by Proposition~\ref{prop:projections_from_product_are_surjective}. The scheme $\Hilb_{S[\PP^{n_1}]}^{h_{r, \PP^{n_1}}}$ is reducible by Proposition~\ref{prop:classification_of_reducible_MGHS_pn}, therefore so is $\Hilb_{S[X]}^{h_{r,X}}$.
 In the second case we may assume that $d\geq 2$ and look at the natural surjective morphism
 \[
 \Hilb_{S[X]}^{h_{r, X}} \to \Hilb_{S[\PP^{n_1}\times \PP^{n_2}]}^{h_{r, \PP^{n_1}\times \PP^{n_2}}} = \Hilb^{h_{r, \PP^1\times \PP^1}}_{S[\PP^1\times \PP^1]}.
 \]
 The scheme $\Hilb_{S[\PP^1\times \PP^1]}^{h_{r, \PP^1\times \PP^1}}$ is reducible by Example~\ref{exa:p1_p1_is_reducible}. Thus, so is $\Hilb_{S[X]}^{h_{r, X}}$. 

We are left with the cases $d \geq 2$ and  $r\in \{2,3\}$. Again, using Proposition~\ref{prop:projections_from_product_are_surjective} it is enough to consider these cases with $d=2$. These were considered in  Examples~\ref{exa:2pts}~and~\ref{exa:3pts}.
\end{proof}

\appendix
\section{Results used in the proof of Theorem~\ref{thm:toric_fibration}}\label{app:1}
\subsection{Notation and background results on toric varieties}\label{sub:toric_notation}
We recall the notation and results from \cite{CLS11} that we use. Assume that $Z$ is a smooth complex $n$-dimensional projective toric variety. Let $\mathbb{T}\cong (\CC^*)^n$. We define $M_Z$ (or $M$) to be $\Hom(\mathbb{T}, \mathbb{C}^*)$---the lattice of characters of the torus $\mathbb{T}$. We denote the dual lattice with $N_Z$ (or $N$). We have a natural pairing $\langle -, - \rangle \colon M\times N \to \mathbb{Z}$ and its extension to the $\mathbb{R}$-vector spaces $M_{\mathbb{R}} = M\otimes_\ZZ \mathbb{R}$ and $N_\mathbb{R} = N \otimes_\ZZ \mathbb{R}$. Given a strongly convex rational polyhedral cone $\sigma$ in $N_{\RR}$ by $\sigma^\vee$ we denote the dual cone in $M_{\mathbb{R}}$ and by $U_{\sigma}$ we denote the spectrum of the group algebra $\CC[\sigma^\vee \cap M]$.
Every normal toric variety whose torus has character lattice $M$ is obtained by gluing a family of $U_{\sigma}$ where $\sigma$ belong to a fan $\Sigma$ in $N_{\mathbb{R}}$.

Let $\Sigma_Z=\Sigma$ be a fan of $Z$ and for a non-negative integer $d$ let $\Sigma(d)$ be the set of all  its $d$-dimensional cones. The Cox ring of $Z$ is then $S[Z] = \CC[\alpha_\rho \mid \rho \in \Sigma(1)]$. Furthermore, given a cone $\sigma\in \Sigma$ by $\sigma(1)$ we denote the set of all its $1$-dimensional faces. For $\sigma \in \Sigma$ let $\alpha^{\widehat{\sigma}} = \prod_{\rho\in\Sigma(1)\setminus \sigma(1)}\alpha_{\rho}$. Then $B(Z) = (\alpha^{\widehat{\sigma}}\mid \sigma\in \Sigma(n))$.

For every $\sigma \in \Sigma$ there is an isomorphism:
\begin{equation}\label{eq:isomorphism_toric_gluing}
\CC[\sigma^\vee \cap M] \to \left(S[Z]_{\alpha^{\widehat{\sigma}}}\right)_\mathbf{0}.
\end{equation}
If $m\in \sigma^\vee \cap M$ the corresponding element of $\CC[\sigma^\vee\cap M]$ is denoted by $\chi^m$.
The isomorphism is defined by $\chi^m\mapsto \prod_{\rho\in \Sigma(1)} \alpha_{\rho}^{\langle m, \mathbf{u}_\rho \rangle}$, where $\mathbf{u}_{\rho}\in \rho\cap N$ is the ray generator of $\rho$.

Recall from \cite[Sec.~3]{Cox95} that for every $\Pic(Z)$-graded $S[Z]$-module $M$ there is a corresponding quasicoherent sheaf $\widetilde{M}$ on $Z$ and that this correspondence defines an exact and essentially surjective functor from the category of $\Pic(Z)$-graded $S[Z]$-modules to the category of quasicoherent sheaves on $Z$. 
As in the case of $\PP^n$ we associate with a coherent sheaf $\mathscr{F}$ on $Z$ the $\Pic(Z)$-graded $S[Z]$-module $\Gamma_*(\mathscr{F}) = \bigoplus_{[D]\in\Pic(Z)} \Gamma(Z, \mathscr{F}(D))$. We have $\widetilde{\Gamma_*(\mathscr{F})} \cong \mathscr{F}$.
Furthermore, given a closed subscheme $R$ of $Z$, by \cite[Cor.~3.8]{Cox95} there exists a unique $B(Z)$-saturated ideal $I$ of $S[Z]$ such that $\widetilde{I}$ defines this subscheme. As in the case of $\PP^n$ we have the following result.
\begin{lemma}\label{lem:unique_saturated_ideal}
Let $R$ be a closed subscheme of $Z$ with ideal sheaf $\mathscr{I}$. The ideal $\Gamma_*(\mathscr{I})$ is $B(Z)$-saturated, so it is the unique $B(Z)$-saturated ideal of $S[Z]$ defining $R$.
\end{lemma}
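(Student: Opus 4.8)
The plan is to run the classical $\PP^n$ argument through the Cox ring, using the affine cover $\{U_\sigma\}_{\sigma\in\Sigma(n)}$ from Subsection~\ref{sub:toric_notation}. Write $J=\Gamma_*(\mathscr{I})$. First I would check that $J$ really is an ideal of $S[Z]$ cutting out $R$: applying the left-exact functor $\Gamma_*$ to $0\to\mathscr{I}\to\mathcal{O}_Z\to\mathcal{O}_R\to 0$ exhibits $J$ as a graded submodule of $\Gamma_*(\mathcal{O}_Z)$, and since $Z$ is a smooth projective toric variety $\Gamma_*(\mathcal{O}_Z)=S[Z]$ because $\Gamma(Z,\mathcal{O}_Z(D))\cong S[Z]_{[D]}$ (\cite[Prop.~5.3.7]{CLS11}); moreover $\widetilde{J}\cong\mathscr{I}$ is the ideal sheaf of $R$. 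So only the saturation assertion needs real work, after which the uniqueness clause is immediate from \cite[Cor.~3.8]{Cox95}.

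For saturation, suppose $f\in S[Z]$ is homogeneous of degree $[D]$ with $f\cdot B(Z)^k\subseteq J$ for some $k$. Since $B(Z)=(\alpha^{\widehat{\sigma}}\mid\sigma\in\Sigma(n))$, this already gives $f\cdot(\alpha^{\widehat{\sigma}})^{k}\in J$ for every maximal cone $\sigma$, hence $f\in J_{\alpha^{\widehat{\sigma}}}$, and since $f$ is homogeneous $f\in (J_{\alpha^{\widehat{\sigma}}})_{[D]}$. I want to conclude $f\in J_{[D]}=\Gamma(Z,\mathscr{I}(D))$; as $\mathscr{I}(D)\hookrightarrow\mathcal{O}_Z(D)$ and sections of a sheaf are determined locally on $Z=\bigcup_{\sigma\in\Sigma(n)}U_\sigma$, it suffices to show that $f$ restricts to a section of $\mathscr{I}(D)$ over each chart $U_\sigma=\Spec(S[Z]_{\alpha^{\widehat{\sigma}}})_\mathbf{0}$ (using the isomorphism~\eqref{eq:isomorphism_toric_gluing}). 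Combining the description of the sheafification functor $M\mapsto\widetilde{M}$ on the affine pieces (that $\widetilde{M}|_{U_\sigma}$ is associated to the $(S[Z]_{\alpha^{\widehat{\sigma}}})_\mathbf{0}$-module $(M_{\alpha^{\widehat{\sigma}}})_\mathbf{0}$, and that twisting by $[D]$ merely shifts the grading) with $\widetilde{J}\cong\mathscr{I}$, one obtains $\Gamma(U_\sigma,\mathscr{I}(D))\cong (J_{\alpha^{\widehat{\sigma}}})_{[D]}$, which is exactly the module that contains $f$. Gluing over the $U_\sigma$, using that $Z$ is separated, then gives $f\in\Gamma(Z,\mathscr{I}(D))=J_{[D]}$, so $J$ is $B(Z)$-saturated.

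The step I expect to require the most care is the bookkeeping in the second paragraph, namely identifying $\mathscr{I}(D)|_{U_\sigma}$ with the sheafification of the graded component $(J_{\alpha^{\widehat{\sigma}}})_{[D]}$, i.e.\ verifying that $\widetilde{(-)}$ commutes with restriction to the charts $U_\sigma$ and with the twist $\mathcal{O}_Z(D)$ in the required way; once that compatibility is pinned down, the generators of $B(Z)$, left-exactness of $\Gamma_*$, and the gluing are all routine, and the needed facts are in \cite[Sec.~3]{Cox95} and Subsection~\ref{sub:toric_notation}. Having produced a $B(Z)$-saturated ideal $J$ with $\widetilde{J}$ equal to the ideal sheaf of $R$, the uniqueness of such an ideal follows from \cite[Cor.~3.8]{Cox95}, completing the proof.
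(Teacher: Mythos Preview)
Your argument is correct and follows essentially the same route as the paper's proof: both take a homogeneous element annihilated by a power of $B(Z)$, use the generators $\alpha^{\widehat{\sigma}}$ to land in $(J_{\alpha^{\widehat{\sigma}}})_{[D]}\cong\Gamma(U_\sigma,\mathscr{I}(D))$ on each maximal chart, and then glue via the sheaf axiom to conclude membership in $\Gamma(Z,\mathscr{I}(D))$. The only cosmetic difference is that the paper introduces an auxiliary ideal $I$ with $\widetilde{I}\cong\mathscr{I}$ to identify $\Gamma(U_\sigma,\mathscr{I}(D))$ with $(I_{\alpha^{\widehat{\sigma}}})_{[D]}$ directly from the definition of sheafification, whereas you work with $J=\Gamma_*(\mathscr{I})$ itself and invoke $\widetilde{J}\cong\mathscr{I}$; both are equivalent bookkeeping choices.
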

\begin{proof}
Let $I$ be any ideal of $S[Z]$ with $\widetilde{I} \cong \mathscr{I}$. Let $s\in \Gamma(Z, \mathcal{O}_Z(D))\subseteq S[Z]$ be such that $\alpha^{\widehat{\sigma}} s \in \Gamma(Z, \mathscr{I}(D+\deg(\alpha^{\widehat{\sigma}}))\subseteq \Gamma_*(\mathscr{I})$ for every $\sigma \in \Sigma(n)$. 
If we restrict it to the open subset $U_{\sigma} \cong \Spec (\CC[\sigma^\vee \cap M])$ then we 
obtain an element $s|_{U_\sigma}\in (S[Z]_{\alpha^{\widehat{\sigma}}})_{[D]}$ such that
 $\alpha^{\widehat{\sigma}} s|_{U_\sigma} \in (I_{\alpha^{\widehat{\sigma}}})_{[D] + \deg (\alpha^{\widehat{\sigma}})}$.
 Multiplying it by $\alpha^{-\widehat{\sigma}}$ we conclude that $s|_{U_\sigma}\in (I_{\alpha^{\widehat{\sigma}}})_{[D]}$.
Since the sets $U_{\sigma}$ with $\sigma \in \Sigma(n)$ cover $Z$ we conclude by the sheaf property of $\mathscr{I}(D)$ that $s\in \Gamma(Z, \mathscr{I}(D)) \subseteq \Gamma_*(\mathscr{I})$.

This shows that $\Gamma_*(\mathscr{I})$ is a $B(Z)$-saturated ideal. The facts that $\widetilde{\Gamma_*(\mathscr{I})} \cong \mathscr{I}$ and that it is the unique ideal with these properties follow, as stated before the lemma, from the results in \cite{Cox95}.
\end{proof}

We consider only smooth and projective toric varieties. If we relax these assumptions, situation is more complicated. See \cite[Sec.~2.1]{Gal23} for a discussion of the properties of the ideal defining a subscheme of a singular or non-projective toric variety.

\subsection{Constructing a morphism to a multigraded Hilbert scheme}

Recall the notation from the proof of part~\ref{it:thm_fib_2} of Theorem~\ref{thm:toric_fibration}.

\begin{proposition}\label{prop:existance_of_morphism}
There is a morphism $\psi_{r,Z}\colon Z^r_{gen} \to \Hilb_{S[Z]}^{h_{r,Z}}$ given on closed points by $(p_1, \ldots, p_r) \mapsto [I(\{p_1, \ldots, p_r\})]$.
\end{proposition}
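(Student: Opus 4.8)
The plan is to construct the morphism $\psi_{r,Z}$ using the universal property of the multigraded Hilbert scheme, i.e.\ by exhibiting a flat family of ideals over $Z^r_{gen}$ with the correct Hilbert function. First I would build the relevant universal subscheme. Over $Z^r$ there is the ``big diagonal'' type locus; more usefully, over $Z^r_{gen}$ consider the closed subscheme $\mathcal{R}\subseteq Z\times Z^r_{gen}$ whose fiber over a point $(p_1,\dots,p_r)$ is the reduced length-$r$ subscheme $\{p_1,\dots,p_r\}$. Concretely, $\mathcal{R}$ is the union of the $r$ pullbacks of the diagonal $\Delta\subseteq Z\times Z$ under the maps $\mathrm{id}\times \mathrm{pr}_i\colon Z\times Z^r_{gen}\to Z\times Z$; since on $Z^r_{gen}$ the $r$ points are pairwise distinct, this union is disjoint over each fiber, hence $\mathcal{R}\to Z^r_{gen}$ is finite and flat of degree $r$, with reduced fibers. (The disjointness uses that $Z^r_{gen}\subseteq Z^r_{dis}$.)

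Next I would pass from the subscheme to a family of saturated ideals in the Cox ring. For this I would use the functorial correspondence between $\Pic(Z)$-graded modules and quasicoherent sheaves recalled in Subsection~\ref{sub:toric_notation}, now in the relative setting over the base $B := Z^r_{gen}$. Working on $\widehat{Z}\times B$ and its quotient, or directly with the relative $\Gamma_*$, one associates to the ideal sheaf $\mathscr{I}_{\mathcal{R}}$ a $\Pic(Z)$-graded sheaf of ideals $\mathcal{I}\subseteq S[Z]\otimes_\CC \mathcal{O}_B$. The key point is that for each $[D]\in\Pic(Z)$ the graded piece $\big((S[Z]\otimes\mathcal{O}_B)/\mathcal{I}\big)_{[D]}$ is, after possibly shrinking nothing, locally free of rank $h_{r,Z}([D])$: indeed it computes $\mathrm{pr}_{B*}\big(\mathcal{O}_{\mathcal{R}}\otimes \mathrm{pr}_Z^*\mathcal{O}_Z(D)\big)$, and by cohomology and base change together with the defining property of $Z^r_{gen}$ (the fiberwise Hilbert function is exactly $h_{r,Z}$, and for a zero-dimensional scheme $H^{>0}$ vanishes so the Hilbert function is constant in the family) this pushforward is locally free of the expected rank and commutes with base change. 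That gives a $B$-point of $\Hilb_{S[Z]}^{h_{r,Z}}$, hence by Yoneda the desired morphism $\psi_{r,Z}$.

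The step I expect to be the main obstacle is the relative version of saturation and the verification that the constructed family really is the one whose fibers are $I(\{p_1,\dots,p_r\})$, i.e.\ identifying the fiber of $\mathcal{I}$ at a closed point with the $B(Z)$-saturated ideal of $\{p_1,\dots,p_r\}$ from \cite[Cor.~3.8]{Cox95}. Since the multigraded Hilbert scheme parametrizes \emph{all} homogeneous ideals with the given quotient Hilbert function (not only saturated ones), I do not actually need the family to be fiberwise saturated for $\psi_{r,Z}$ to be a morphism; but I do need to know that on closed points it outputs precisely $[I(\{p_1,\dots,p_r\})]$, which by Lemma~\ref{lem:unique_saturated_ideal} is $\Gamma_*(\mathscr{I}_{\{p_1,\dots,p_r\}})$, and this follows from base-change compatibility of $\mathrm{pr}_{B*}$ established above (each graded piece of the fiber is $H^0(Z,\mathscr{I}_{\{p_1,\dots,p_r\}}(D))$). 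A clean alternative, if one prefers to avoid the relative Cox-ring bookkeeping, is to invoke the existence of the morphism $\mathcal{H}ilb_r(Z)\to \Hilb_{S[Z]}^{h_{r,Z}}$ (a relative saturated-ideal construction, cf.\ the $\PP^n$ case in \cite[Lem.~4.1]{HS04} and its toric analogue) and precompose with the classifying map $Z^r_{gen}\to \mathcal{H}ilb_r(Z)$ of the flat family $\mathcal{R}\to Z^r_{gen}$; the content is then entirely in flatness of $\mathcal{R}$ and in the relative saturated-ideal morphism, and the rest is formal.
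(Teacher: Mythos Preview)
Your proposal is correct and follows essentially the same approach as the paper: build the universal length-$r$ family $\mathcal{R}\subseteq Z\times Z^r_{gen}$ as a disjoint union of graphs/diagonals, push forward the twisted structure sequence, and use cohomology and base change plus the defining property of $Z^r_{gen}$ to see that each graded piece of the quotient is locally free of rank $h_{r,Z}([D])$, then invoke the universal property of the multigraded Hilbert scheme. The only point the paper makes more explicit is the surjectivity of $S[Z]_{[D]}\otimes\mathcal{O}_B\to \pi_*(\mathcal{O}_{\mathcal{R}}(D))$ when $h_{r,Z}([D])=r$ (it works with $\mathcal{A}=\operatorname{im}(\pi_*\eta)$ and checks surjectivity fiberwise via Nakayama), which is exactly what your appeal to the definition of $Z^r_{gen}$ is encoding; your suggested alternative through a map $\mathcal{H}ilb_r(Z)\to\Hilb_{S[Z]}^{h_{r,Z}}$ would, however, require essentially the same construction and does not shortcut anything.
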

\begin{proof}
The subset $Z^r_{gen} \subseteq Z^r$ is open by \cite[Thm.~1.4]{BB21}. In particular, it has a natural scheme structure. Let $\mathscr{U} \subseteq Z^r_{gen}\times Z$ be the reduced closed subscheme $\coprod_{i=1}^r Z_i$ where $Z_i = \{\left((p_1, \ldots, p_r), q\right) \mid p_i=q\}$. The family $\mathscr{U}$ is flat over  $Z^r_{gen}$ since each $Z_i$ is mapped isomorphically to $Z^r_{gen}$. By construction the fiber over a closed point $(p_1,\ldots, p_r)$ of $Z^r_{gen}$ is the reduced subscheme $\{p_1, \ldots, p_r\}$ of $Z$. Let $\pi\colon Z^r_{gen}\times Z \to Z^r_{gen}$ be the projection.

Consider the exact sequence of $\mathcal{O}_{Z^r_{gen}\times Z}$-modules
\begin{equation}\label{eq:ses}
0\to \bigoplus_{[D]\in \Pic(Z)} \mathcal{I}_{\mathscr{U}}(D) \to \bigoplus_{[D]\in \Pic(Z)}\mathcal{O}_{Z^r_{gen}\times Z}(D) \xrightarrow{\eta} \bigoplus_{[D]\in \Pic(Z)} \mathcal{O}_{\mathscr{U}}(D) \to 0.
\end{equation}
Let $\mathcal{A} = \operatorname{im}(\pi_*\eta)$. We verify the following claims:
\begin{enumerate}[label=(\alph*)]
\item \label{it:app_1} we have $\pi_*(\bigoplus_{[D]\in \Pic(Z)}\mathcal{O}_{Z^r_{gen}\times Z}(D))\cong \mathcal{O}_{Z^r_{gen}}\otimes_\CC S[Z]$;
\item \label{it:app_2} $\mathcal{A}$ is a sheaf of $\mathcal{O}_{Z^r_{gen}}\otimes_{\CC} S[Z]$-algebras;
\item \label{it:app_3} $\mathcal{A}_{[D]}$ is a locally free sheaf of $\mathcal{O}_{Z^r_{gen}}$-modules of rank $h_{r,Z}([D])$ for every $[D]\in \Pic(Z)$.
\end{enumerate}
Claim~\ref{it:app_1} follows from \cite[Prop.~III.9.3]{Har77} since there is an isomorphism $\Gamma(Z, \mathcal{O}_Z(D))\cong S[Z]_{[D]}$. 
In the exact sequence \eqref{eq:ses} the $\mathcal{O}_{Z^r_{gen}\times Z}$-submodule $\bigoplus_{[D]\in \Pic(Z)} \mathcal{I}_{\mathscr{U}}(D)$ of the sheaf $\bigoplus_{[D]\in \Pic(Z)}\mathcal{O}_{Z^r_{gen}\times Z}(D)$ of $\mathcal{O}_{Z^r_{gen}\times Z}$-algebras  is a sheaf of ideals. By the left-exactness of the pushforward we get that the kernel of $\pi_*(\eta)$ is a sheaf of ideals of the sheaf $\mathcal{O}_{Z^r_{gen}}\otimes_\CC S[Z]$. Claim~\ref{it:app_2} follows.

Finally we address the third claim. By the definition of $\mathscr{U}$ and Lemma~\ref{lem:unique_saturated_ideal}, for every $z\in Z^r_{gen}$ we have 
$\dim_\CC H^0((Z^r_{gen} \times Z)_z, (\mathcal{I}_\mathscr{U}(D))_z) = \dim_\CC S[Z]_{[D]} - h_{r,Z}([D])$
 and $\dim_\CC H^0((Z^r_{gen} \times Z)_z, (\mathcal{O}_\mathscr{U}(D))_z) = r$. Moreover, both $\mathcal{I}_{\mathscr{U}}(D)$  and $\mathcal{O}_{\mathscr{U}}(D)$ are flat over $Z^r_{gen}$.
Therefore, by \cite[Cor.~III.12.9]{Har77} the sheaves of $\mathcal{O}_{Z^r_{gen}}$-modules $\pi_*(\mathcal{I}_{\mathscr{U}}(D))$ and $\pi_*(\mathcal{O}_{\mathscr{U}}(D))$ are locally free of rank $\dim_\CC S[Z]_{D}-h_{r,Z}([D])$ and $r$, respectively. 
In particular, if $h_{r,Z}([D]) < r$, then $\mathcal{A}_{[D]}\cong \mathcal{O}_{Z^r_{gen}}\otimes_\CC S[Z]_{[D]}$ is locally free of rank $h_{r,Z}([D])$.
Therefore, we may and do assume that $h_{r,Z}([D]) = r$. 
By the above, it is enough to show that $\pi_*(\eta)$ induces a surjection $\mathcal{O}_{Z^r_{gen}}\otimes_{\CC} S[Z]_{[D]} \to \pi_*(\mathcal{O}_{\mathscr{U}}(D))$. This can be checked on stalks over closed points, and by Nakayama's lemma it is enough to verify this on fibers. 
Let $z\in Z^r_{gen}$ correspond to the subscheme $R\subseteq Z$ and let $I_R$ denote its $B(Z)$-saturated ideal. Using \cite[Cor.~III.12.9]{Har77} it is enough to show that the natural map  $S[Z]_{[D]}\to H^0(Z, \mathcal{O}_R(D))$ is surjective. By Lemma~\ref{lem:unique_saturated_ideal} the kernel of this map is $(I_R)_{[D]}$ and we have
\[
r= \dim_\CC H^0(Z, \mathcal{O}_R(D)) = \dim_\CC S[Z]_{[D]}-\dim_\CC {(I_R)}_{[D]}.
\]
This finishes the proof of claim~\ref{it:app_3}.

Let $p\colon Z^r_{gen}\times \overline{Z} \to Z^r_{gen}$ be the natural projection where $\overline{Z} = \Spec S[Z]$. It follows from claims~\ref{it:app_1}~and~\ref{it:app_2} that there exists a family of closed subschemes of $Z^r_{gen}\times \overline{Z}$ over $Z^r_{gen}$ with structure sheaf $\mathcal{B}$ such that $p_*\mathcal{B} \cong \mathcal{A}$. Claim~\ref{it:app_3} ensures that this is an admissible family for the Hilbert function $h_{r,Z}$.
It follows from the universal property of $\Hilb_{S[Z]}^{h_{r,Z}}$ that there is a morphism $\psi_{r,Z}\colon Z^r_{gen} \to \Hilb_{S[Z]}^{h_{r,Z}}$ corresponding to this family.
By construction, on closed points, it maps $(p_1, \ldots, p_r)$ to $I(\{p_1, \ldots, p_r\})$.
\end{proof}

\subsection{Examples of applications of Theorem~\ref{thm:toric_fibration}}\label{subsec:examples}
We finish with two examples showing how the surjectivity of the map $\Slip_{r,X} \to \Slip_{r,Y}$ could be used to show that certain ideal is in $\Slip_{r,X}$. We first give a lemma which uses the notation from~\ref{sub:toric_notation}.

\begin{lemma}\label{lem:char_of_property_3} Let $f\colon X\to Y$ be a toric morphism between smooth projective toric varieties.
Let $S[X] = \CC[\alpha_\rho \mid \rho\in \Sigma_X(1)]$ and $S[Y] = \CC[\beta_\rho \mid \rho\in \Sigma_Y(1)]$
be the Cox rings of $X$ and $Y$, respectively. Let $\partial\colon N_X\to N_Y$ be the map corresponding to $f$.
Assume that we are given a homomorphism of rings $\varphi \colon S[Y]\to S[X]$ satisfying conditions
\ref{it:deflift1}~and~\ref{it:deflift2} from Definition~\ref{def:lift}. The homomorphism $\varphi$ is a lift of
$f$, if and only if we have 
\begin{equation}\label{eq:con_3}
\prod_{\rho\in \Sigma_Y(1)} (\varphi(\beta_{\rho}))^{\langle m, \mathbf{u}_\rho \rangle} = \prod_{\rho\in \Sigma_X(1)} \alpha_\rho^{\langle \delta(m), \mathbf{u}_{\rho} \rangle}
\end{equation}
for every every $m\in M_Y$, where $\delta \colon M_Y\to M_X$ is the dual map of $\partial \colon N_X\to N_Y$.
\end{lemma}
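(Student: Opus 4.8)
The plan is to translate condition~\ref{it:deflift3}, i.e.\ the equality $\pi_Y\circ\widehat f=f\circ\pi_X$ of morphisms $\widehat X\to Y$, into an equality of comorphisms on the big tori, where it becomes precisely \eqref{eq:con_3}. I read \eqref{eq:con_3} as an identity in $\operatorname{Frac}(S[X])$, which presupposes $\varphi(\beta_\rho)\neq0$ for every $\rho\in\Sigma_Y(1)$, so I would first observe that this nonvanishing may be assumed. If some $\varphi(\beta_{\rho_0})=0$, then $\widehat f(\widehat X)\subseteq V(\beta_{\rho_0})$, so $\pi_Y(\widehat f(\widehat X))$ lies in the torus-invariant prime divisor $D_{\rho_0}$; since $\pi_X$ is surjective and $f$ is toric, $f(\pi_X(\widehat X))=f(X)$ contains the nonempty subset $f(T_X)$ of $T_Y$, which is disjoint from $D_{\rho_0}$, so \ref{it:deflift3} cannot hold; and \eqref{eq:con_3} is then also not a valid identity (choose $m$ with $\langle m,\mathbf u_{\rho_0}\rangle\neq0$), so both sides of the asserted equivalence fail. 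From now on I assume $\prod_{\rho}\varphi(\beta_\rho)\neq0$ and set $U:=T_{\overline X}\cap\widehat f^{-1}(T_{\overline Y})$, a nonempty (hence dense) open subscheme of the integral scheme $\widehat X$, where $T_{\overline X}\subseteq\overline X$ and $T_{\overline Y}\subseteq\overline Y$ denote the big tori.

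Next I would record the two relevant comorphisms on $U$. By \eqref{eq:isomorphism_toric_gluing} applied to the trivial cone, $\pi_X^{-1}(T_X)=T_{\overline X}$ and the restriction $\pi_X\colon T_{\overline X}\to T_X$ has comorphism sending $\chi^{n}\in\CC[M_X]=\mathcal O(T_X)$ to $\prod_{\rho\in\Sigma_X(1)}\alpha_\rho^{\langle n,\mathbf u_\rho\rangle}$, and likewise for $\pi_Y$. Since $f$ is toric, $f(T_X)\subseteq T_Y$ and the comorphism of $f|_{T_X}$ sends $\chi^m$ to $\chi^{\delta(m)}$ for $m\in M_Y$; and since $\widehat f$ is a restriction of $\overline f=\Spec\varphi$, we have $\widehat f^{*}\beta_\rho=\varphi(\beta_\rho)$, which is a unit on $U$. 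It follows that both $\pi_Y\circ\widehat f$ and $f\circ\pi_X$ carry $U$ into the affine torus $T_Y\subseteq Y$, and that for every $m\in M_Y$ their comorphisms send $\chi^m$ to $\prod_{\rho\in\Sigma_Y(1)}\varphi(\beta_\rho)^{\langle m,\mathbf u_\rho\rangle}$ and to $\prod_{\rho\in\Sigma_X(1)}\alpha_\rho^{\langle\delta(m),\mathbf u_\rho\rangle}$ respectively, as elements of $\mathcal O(U)\subseteq\operatorname{Frac}(S[X])$.

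The equivalence is then formal. Because $\widehat X$ is reduced and $Y$ is separated, two morphisms $\widehat X\to Y$ coincide if and only if they coincide on the dense open $U$; and two morphisms $U\to T_Y$ coincide if and only if their comorphisms agree on the algebra generators $\chi^m$ ($m\in M_Y$) of $\CC[M_Y]$, and agreement in $\mathcal O(U)$ is equivalent to agreement in $\operatorname{Frac}(S[X])$. By the previous paragraph this last condition is exactly the system \eqref{eq:con_3}. Since \ref{it:deflift3} is precisely the statement $\pi_Y\circ\widehat f=f\circ\pi_X$, we conclude that $\varphi$ is a lift of $f$ --- equivalently, given \ref{it:deflift1} and \ref{it:deflift2}, that \ref{it:deflift3} holds --- if and only if \eqref{eq:con_3} holds.

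The delicate point, and the step I would be most careful about, is the bookkeeping that separates regular from rational functions: one must check that the two character pullbacks are equal as elements of $\operatorname{Frac}(S[X])$ and not merely on some a priori smaller open set, which is the reason for passing to the big-torus locus $U$ (where every $\varphi(\beta_\rho)$ is invertible) and for isolating the degenerate case where some $\varphi(\beta_\rho)$ vanishes. The remaining ingredients --- the explicit form of $\pi_Z$ on the big torus coming from \eqref{eq:isomorphism_toric_gluing}, the inclusion $f(T_X)\subseteq T_Y$ for a toric morphism, and the principle that morphisms from a reduced scheme into a separated scheme that agree on a dense open subscheme are equal --- are all standard.
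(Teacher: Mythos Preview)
Your argument is correct. Both proofs reduce condition~\ref{it:deflift3} to an equality of comorphisms via the explicit description of the quotient maps $\pi_X,\pi_Y$ coming from~\eqref{eq:isomorphism_toric_gluing}, so the core idea is the same. The execution differs: the paper invokes \cite[Thm.~5.0.6]{CLS11} to produce the unique morphism $f'\colon X\to Y$ with $\pi_Y\circ\widehat f=f'\circ\pi_X$ and then checks $f=f'$ chart by chart on all pairs $(U_\sigma,U_{\sigma'})$, whereas you compare $\pi_Y\circ\widehat f$ and $f\circ\pi_X$ directly as morphisms $\widehat X\to Y$, restrict to the single dense open given by the big torus, and conclude by separatedness of $Y$. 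Your route is a little more elementary in that it avoids the appeal to Cox's theorem and needs only one chart; the price is the preliminary case analysis disposing of the possibility $\varphi(\beta_{\rho_0})=0$, which the paper's chartwise argument does not isolate.
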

\begin{proof}
By $\widehat{f}\colon \widehat{X}\to \widehat{Y}$ we denote the morphism defined by $\varphi$ as in Definition~\ref{def:lift}. By \cite[Thm.~5.0.6]{CLS11} there is a unique morphism $f'\colon X\to Y$ such that $\pi_Y\circ \widehat{f} = f'\circ \pi_X$. We need to show that $f = f'$ if and only if \eqref{eq:con_3} holds for every $m\in M_Y$.
We have $f=f'$ if and only if they define the same morphism $U_\sigma \to U_{\sigma'}$ of affine toric varieties for every pair of cones $\sigma\in \Sigma_X$ and $\sigma'\in \Sigma_Y$ satisfying $\partial_\mathbb{R}(\sigma) \subseteq \sigma'$.

Recall that we have $\beta^{\widehat{\sigma'}} = \prod_{\rho\in \Sigma_Y(1)\setminus \sigma'(1)} \beta_\rho$ and $\alpha^{\widehat{\sigma}} = \prod_{\rho\in \Sigma_X(1)\setminus \sigma(1)} \alpha_\rho$. The map $U_\sigma \to U_{\sigma'}$ induced by $f$ corresponds to the homomorphism $\CC[(\sigma')^\vee\cap M_Y]\to \CC[\sigma^\vee\cap M_X]$ given by $\chi^m\mapsto \chi^{\delta(m)}$. On the other hand, the map $U_\sigma\to U_{\sigma'}$ induced by $f'$ corresponds to the map $\left(\varphi_{\beta^{\widehat{\sigma'}}}\right)_\mathbf{0}\colon \left(S[Y]_{\beta^{\widehat{\sigma'}}}\right)_\mathbf{0} \to \left(S[X]_{\alpha^{\widehat{\sigma}}}\right)_\mathbf{0}$. Therefore, $f$ and $f'$ induce the same map $U_\sigma\to U_{\sigma'}$ if and only if Equation~\eqref{eq:con_3} holds. Indeed, this is equivalent to the commutativity of the diagram
\begin{center}
\begin{tikzcd}[column sep=huge]
\CC[(\sigma')^\vee\cap M_Y] \arrow[r, "\chi^m\mapsto \chi^{\delta(m)}"] \arrow[d, "\chi^{m}\mapsto \prod_{\rho\in \Sigma_Y(1)} \beta_\rho^{\langle m, \mathbf{u}_\rho \rangle}"'] & \CC[\sigma^\vee\cap M_X]\arrow[d, "\chi^{m}\mapsto \prod_{\rho\in \Sigma_X(1)} \alpha_\rho^{\langle m, \mathbf{u}_\rho \rangle}"] \\
\left(S[Y]_{\beta^{\widehat{\sigma'}}}\right)_\mathbf{0} \arrow[r, "\left(\varphi_{\beta^{\widehat{\sigma'}}}\right)_\mathbf{0}" ] & \left(S[X]_{\alpha^{\widehat{\sigma}}}\right)_\mathbf{0}
\end{tikzcd}
\end{center}
where the vertical maps are the isomorphisms \eqref{eq:isomorphism_toric_gluing}.
\end{proof}

\begin{example}\label{exa:h1}
Let $X = \mathcal{H}_1$ be the Hirzebruch surface considered in Example~\ref{exa:hr}. Recall that we have $S[X] = \CC[\alpha_1, \alpha_2, \alpha_3, \alpha_4]$ with $\deg (\alpha_1) = (1,0)  = \deg (\alpha_3)$, $\deg (\alpha_2) = (1,1)$ and $\deg (\alpha_4)= (0,1)$. Let $Y = \PP^2$. Its fan is the complete fan in $\mathbb{R}^2$ with one-dimensional cones spanned by $v_0=(-1,1)$, $v_1=(1,0)$ and $v_2=(0,-1)$. Corresponding to these rays we have $S[Y] = \CC[\beta_0, \beta_1, \beta_2]$ with $\deg (\beta_0) = \deg (\beta_1) = \deg (\beta_2) = 1$.
Furthermore, the identity map of $\RR^2$ is compatible with the fans of $X$ and $Y$ so it gives a toric morphism $f\colon X\to Y$ that is the blowing up of $\PP^2$ at the torus invariant point $[0:0:1]$ \cite[pp.~132--133]{CLS11}.

We construct a lift of $f$ to a homomorphism $\varphi\colon S[Y] \to S[X]$. First we need to compute the pullback map $\phi\colon \Pic(Y)\to \Pic(X)$. We identify $\Pic(Y)$ with $\mathbb{Z}$ with basis $[D_0]=[D_1]=[D_2]$ and $\Pic(X)$ with $\mathbb{Z}^2$ with basis $([D_3], [D_4])$. Let $\psi \colon \RR^2\to \RR$ be the support function of the Cartier divisor on $Y$ corresponding to $v_2$. It follows from \cite[Prop.~6.2.7]{CLS11} that $\phi(1) = [D]$ where $D$ is the unique torus-invariant Cartier divisor on $X$ with support function $\psi$. This is the divisor corresponding to $u_2$. We obtain that $\phi(1) = (1,1)$.

Let $\varphi \colon S[Y]\to S[X]$ be defined by $\beta_0\mapsto \alpha_3\alpha_4$, $\beta_1\mapsto \alpha_1\alpha_4$ and $\beta_2\mapsto \alpha_2$. By the above computation of $\phi$ we see that it satisfies property~\ref{it:deflift1} from Definition~\ref{def:lift}.
Furthermore, $B(X) = (\alpha_1\alpha_2, \alpha_2\alpha_3, \alpha_3\alpha_4, \alpha_4\alpha_1)$ and $B(Y) = (\beta_0, \beta_1, \beta_2)$ so by Lemma~\ref{lem:characterization_of_property_2}  property~\ref{it:deflift2} from that definition is also fulfilled. In order to verify that property~\ref{it:deflift3} holds we use Lemma~\ref{lem:char_of_property_3}. Since $f$ is induced by the identity map on $\mathbb{R}^2$ Equation~\eqref{eq:con_3} takes the form of the following identity
\[
(\alpha_3\alpha_4)^{m_2-m_1}(\alpha_1\alpha_4)^{m_1}\alpha_2^{-m_2} = \alpha_1^{m_1}\alpha_2^{-m_2}\alpha_3^{-m_1+m_2}\alpha_4^{m_2}
\]
which holds for every $(m_1, m_2)\in\mathbb{Z}^2$.

Recall from Example~\ref{exa:hr} that $\Hilb^{h_{2, X}}_{S[X]}$ is not irreducible. Consider the ideal $I = (\beta_0^2, \beta_2) \subseteq S[Y]$. We have $[I]\in \Hilb^{h_{2, Y}}_{S[Y]} = \Slip_{2, Y}$ (see Proposition~\ref{prop:classification_of_reducible_MGHS_pn}).
It follows from Theorem~\ref{thm:toric_fibration} that there exists $[K]\in \Slip_{2,X} \subseteq \Hilb^{h_{2, X}}_{S[X]}$ with $\varphi^{-1}(K) = I$. However, $\varphi(I)\cdot S[X] = (\alpha_3^2\alpha_4^2, \alpha_2)$
implies that $K_{(2,2)} = (\alpha_3^2\alpha_4^2, \alpha_2)_{(2,2)}$. Consequently we get $K_{(2,0)} = \langle \alpha_3^2 \rangle$. 
Since $S[X]/(\alpha_3^2, \alpha_2)$ has Hilbert function $h_{2, X}$, it follows that $[(\alpha_3^2, \alpha_2)]$ is the unique closed point in the fiber of $\pi\colon \Hilb_{S[X]}^{h_{2, X}} \to \Hilb_{S[Y]}^{h_{2,Y}}$ over $[I]$ and is therefore a point of $\Slip_{2, X}.$
\end{example}

\begin{example}\label{ex:h1c}
We use the notation from Example~\ref{exa:h1}. Let $I = (\beta_1, \beta_2^2)\subseteq S[Y]$. We have $[I]\in \Hilb^{h_{2,Y}}_{S[Y]} = \Slip_{2,Y}$. We show that the fiber of $\pi$ over $[I]$ is one-dimensional and we identify the unique point in this fiber that belongs to $\Slip_{2,X}$.
Observe that $\varphi(I)\cdot S[X] = (\alpha_1\alpha_4,\alpha_2^2)$. Let $J = ((\alpha_1\alpha_4,\alpha_2^2)\colon B(X)^\infty)$. We have $J=(\alpha_1, \alpha_2^2)$ and 
\[
H_{S[X]/J}(a,b) = \begin{cases}
h_{2,X}(a,b) & \text{ if } b\geq 1\\
1 & \text{ if } b=0.
\end{cases}
\]
Let $[K]\in \Hilb_{S[X]}^{h_{2,X}}$ be a point in the fiber of $\pi$ over $[I]$. We claim that $(K\colon B(X)^{\infty}) = J$. By the definition of $J$ we have $(K\colon B(X)^{\infty})\supseteq J$. 
Suppose that the inclusion is strict. We may take an initial ideal of $(K\colon B(X)^{\infty})$ and then its saturation with respect to $B(X)$ to obtain a $B(X)$-saturated monomial ideal $J'$ with Hilbert polynomial of $S[X]/J'$ equal to $2$ that strictly contains $J$. Let $M$ be a monomial in $J'\setminus J$. If $M$ is of the form $\alpha_3M'$ or $\alpha_4M'$ for some monomial $M'$ then from the fact that $J'$ is $B(X)$-saturated and contains $(\alpha_1, \alpha_2^2)$ we conclude that $M'\in J'$. Therefore, we may and do assume that $M'$ is a monomial in $\alpha_1$ and $\alpha_2$. It follows that it is $1$ or $\alpha_2$ which contradicts the assumption on the Hilbert polynomial.

It follows from the above claims about the Hilbert function of $S[X]/J$ and the saturation of $K$ that the closed points of the fiber of $\pi$ over $[I]$ correspond to all the ideals from the set 
\[
\{J_{[s:t]}= (\alpha_2^2, \alpha_2\alpha_1, \alpha_1\alpha_4, \alpha_1(s\alpha_1+t\alpha_3) \mid [s:t]\in \PP^1\}.
\]
We claim that $[J_{[s:t]}]\in \Slip_{2, X}$ if and only if $[s:t] = [1:0]$. Observe that all $J_{[s:t]}$ with $st\neq 0$ are in one $\operatorname{GL}_4(\CC)$-orbit, so if any of them is in $\Slip_{2, X}$ then all $[J_{[s\colon t]}]$ are in $\Slip_{2,X}$. However, Example~\ref{exa:hr} shows that $[J_{[0:1]}]\notin \Slip_{2,X}$. Therefore, $[J_{[s:t]}]$ with $t\neq 0$ are not in $\Slip_{2,X}$.
On the other hand, by Theorem~\ref{thm:toric_fibration} there is at least one $[s:t]\in \PP^1$ such that $[J_{[s:t]}] \in \Slip_{2,X}$. Hence $[J_{[1:0]}] \in \Slip_{2,X}$.
\end{example}


\begin{thebibliography}{10}

\bibitem{AB13}
H.~Abo and M.~C. Brambilla.
\newblock On the dimensions of secant varieties of {S}egre-{V}eronese
  varieties.
\newblock {\em Ann. Mat. Pura Appl. (4)}, 192(1):61--92, 2013.

\bibitem{AH95}
J.~Alexander and A.~Hirschowitz.
\newblock Polynomial interpolation in several variables.
\newblock {\em J. Algebraic Geom.}, 4(2):201--222, 1995.

\bibitem{BBC12}
E.~Ballico, A.~Bernardi, and M.~V. Catalisano.
\newblock Higher secant varieties of $\mathbb{P}^n\times \mathbb{P}^1$ embedded
  in bi-degree $(a, b)$.
\newblock {\em Communications in Algebra}, 40(10):3822--3840, 2012.

\bibitem{BB14}
W.~{Buczy{\'n}ska} and J.~{Buczy{\'n}ski}.
\newblock Secant varieties to high degree {V}eronese reembeddings,
  catalecticant matrices and smoothable {G}orenstein schemes.
\newblock {\em Journal of Algebraic Geometry}, 23:63--90, Sep 2013.

\bibitem{BB21}
W.~{Buczy{\'n}ska} and J.~{Buczy{\'n}ski}.
\newblock {A note on families of saturated ideals}, 2021.
\newblock Working notes provided as a temporary reference for other authors,
  available at
  \url{https://www.mimuw.edu.pl/~jabu/CV/publications/saturation_open.pdf}.

\bibitem{BB19}
W.~{Buczy{\'n}ska} and J.~{Buczy{\'n}ski}.
\newblock {Apolarity, border rank, and multigraded Hilbert scheme}.
\newblock {\em Duke Mathematical Journal}, 170(16):3659--3702, 2021.

\bibitem{CEVV09}
D.~A. Cartwright, D.~Erman, M.~Velasco, and B.~Viray.
\newblock Hilbert schemes of 8 points.
\newblock {\em Algebra Number Theory}, 3(7):763--795, 2009.

\bibitem{CHL19}
A.~Conner, A.~Harper, and J.~M. Landsberg.
\newblock New lower bounds for matrix multiplication and the 3x3 determinant.
\newblock arXiv:1911.07981v1, 2019.

\bibitem{CS07}
D.~Cox and J.~Sidman.
\newblock Secant varieties of toric varieties.
\newblock {\em Journal of Pure and Applied Algebra}, 209(3):651--669, 2007.

\bibitem{Cox95}
D.~A. Cox.
\newblock The homogeneous coordinate ring of a toric variety.
\newblock {\em J. Algebraic Geom.}, 4(1):17--50, 1995.

\bibitem{cox95b}
D.~A. Cox.
\newblock {The functor of a smooth toric variety}.
\newblock {\em Tohoku Mathematical Journal}, 47(2):251--262, 1995.

\bibitem{CLS11}
D.~A. Cox, J.~B. Little, and H.~K. Schenck.
\newblock {\em Toric Varieties}.
\newblock Graduate Studies in Mathematics. American Mathematical Soc., 2011.

\bibitem{Eis05}
D.~Eisenbud.
\newblock {\em The Geometry of Syzygies: A Second Course in Algebraic Geometry
  and Commutative Algebra}.
\newblock Graduate Texts in Mathematics. Springer, 2005.

\bibitem{Fla22}
C.~Flavi.
\newblock Border rank of powers of ternary quadratic forms, 2022.
\newblock https://arxiv.org/abs/2208.07921.

\bibitem{GRV18}
M.~Gallet, K.~Ranestad, and N.~Villamizar.
\newblock {Varieties of apolar subschemes of toric surfaces}.
\newblock {\em Arkiv för Matematik}, 56(1):73--99, 2018.

\bibitem{GO22}
F.~Galuppi and A.~Oneto.
\newblock Secant non-defectivity via collisions of fat points.
\newblock {\em Advances in Mathematics}, 409:108657, 2022.

\bibitem{Gal16}
M.~Ga{\l}ązka.
\newblock Vector bundles give equations of cactus varieties.
\newblock {\em Linear Algebra and its Applications}, 521:254--262, May 2017.

\bibitem{Gal23}
M.~Ga{\l{}}ązka.
\newblock Secant varieties, {W}aring rank and generalizations from algebraic
  geometry viewpoint, 2023.
\newblock PhD thesis, available at
  \url{https://www.mimuw.edu.pl/~mgalazka/thesis.pdf}.

\bibitem{GMR20}
M.~Gałązka, T.~Mańdziuk, and F.~Rupniewski.
\newblock Distinguishing secant from cactus varieties.
\newblock arXiv:2007.16203, 2020.

\bibitem{GW10}
U.~G{\"o}rtz and T.~Wedhorn.
\newblock {\em Algebraic Geometry: Part I: Schemes. With Examples and
  Exercises}.
\newblock Advanced Lectures in Mathematics. Vieweg+Teubner Verlag, 2010.

\bibitem{M2}
D.~R. Grayson and M.~E. Stillman.
\newblock Macaulay2, a software system for research in algebraic geometry.
\newblock Available at \url{https://faculty.math.illinois.edu/Macaulay2/}.

\bibitem{HS04}
M.~Haiman and B.~Sturmfels.
\newblock Multigraded {H}ilbert schemes.
\newblock {\em Journal of Algebraic Geometry}, 13:725--769, Mar 2004.

\bibitem{Har77}
R.~Hartshorne.
\newblock {\em Algebraic Geometry}.
\newblock Springer-Verlag, New York, 1977.
\newblock Graduate Texts in Mathematics, No. 52.

\bibitem{HMV20}
H.~Huang, M.~Micha{\l{}}ek, and E.~Ventura.
\newblock Vanishing {H}essian, wild forms and their border {VSP}.
\newblock {\em Mathematische Annalen}, 378(3):1505--1532, Dec 2020.

\bibitem{IK06}
A.~Iarrobino, V.~Kanev, and S.~Kleiman.
\newblock {\em Power Sums, {G}orenstein Algebras, and Determinantal Loci}.
\newblock Lecture Notes in Mathematics. Springer Berlin Heidelberg, 2006.

\bibitem{JM22}
J.~Jelisiejew and T.~Ma\'ndziuk.
\newblock Limits of saturated ideals.
\newblock 2022.
\newblock arXiv:2210.13579.

\bibitem{Lan12}
J.~M. Landsberg.
\newblock {\em Tensors: geometry and applications}, volume 128 of {\em Graduate
  Studies in Mathematics}.
\newblock American Mathematical Society, Providence, RI, 2012.

\bibitem{LO13}
J.~M. Landsberg and G.~Ottaviani.
\newblock Equations for secant varieties of {V}eronese and other varieties.
\newblock {\em Annali di Matematica Pura ed Applicata}, 192:569--606, Aug 2013.

\bibitem{MacSmi03}
D.~Maclagan and G.~G. Smith.
\newblock Uniform bounds on multigraded regularity.
\newblock {\em Journal of Algebraic Geometry}, 14:137--164, 2003.

\bibitem{MacSmi04}
D.~Maclagan and G.~G. Smith.
\newblock Multigraded {C}astelnuovo-{M}umford regularity.
\newblock {\em Journal für die reine und angewandte Mathematik (Crelles
  Journal)}, 2004:179--212, Jan 2004.

\bibitem{Man17}
T.~Ma{\'n}dziuk.
\newblock Cox rings and algebraic maps.
\newblock {\em Mathematische Nachrichten}, 292(2):389--401, 2019.

\bibitem{Man20}
T.~Ma{\'n}dziuk.
\newblock Identifying limits of ideals of points in the case of projective
  space.
\newblock {\em Linear Algebra and its Applications}, 634:149--178, 2022.

\bibitem{Rai10}
C.~Raicu.
\newblock 3x3 {M}inors of {C}atalecticants.
\newblock {\em Mathematical Research Letters}, 20:745–756, Nov 2010.

\end{thebibliography}
\end{document}